\newcommand{\newword}[1]{\textbf{\emph{#1}}}
\newcommand{\into}{\hookrightarrow}
\newcommand{\onto}{\twoheadrightarrow}
\newcommand{\join}{\vee}
\newcommand{\bigmeet}{\bigwedge} 
\newcommand{\bigjoin}{\bigvee}
\newcommand{\QQ}{\mathbb{Q}}
\newcommand{\RR}{\mathbb{R}}
\newcommand{\ZZ}{\mathbb{Z}}
\newcommand{\cX}{\mathcal{X}}
\newcommand{\tA}{\widetilde{A}}
\newcommand{\tB}{\widetilde{B}}
\newcommand{\tC}{\widetilde{C}}
\newcommand{\tD}{\widetilde{D}}
\newcommand{\tS}{\widetilde{S}}
\newcommand{\tT}{\widetilde{T}}
\newcommand{\tX}{\widetilde{X}}
\newcommand{\te}{\widetilde{e}}
\newcommand{\Fix}{\text{Fix}}
\newtheorem{theorem}{Theorem}
\numberwithin{theorem}{section}
\newtheorem{proposition}[theorem]{Proposition}
\newtheorem{corollary}[theorem]{Corollary}
\newtheorem{cor}[theorem]{Corollary}
\newtheorem{lemma}[theorem]{Lemma}
\theoremstyle{definition}
\newtheorem{remark}[theorem]{Remark}
\newtheorem{definition}[theorem]{Definition}
\newtheorem{eg}[theorem]{Example}
\newcommand{\op}{\text{op}}
\renewcommand{\a}{\alpha}
\DeclareMathOperator{\bic}{Bic}
\newcommand{\pos}{\Phi^+}
\DeclareMathOperator{\tot}{Tot}
\DeclareMathOperator{\faces}{Faces}
\newcommand{\An}{\widetilde{A}_{n-1}}
\newcommand{\tAn}{\An}
\title{Combinatorial Descriptions of Biclosed Sets in Affine Type}
\date{}
\begin{document}

\begin{abstract}Let $W$ be a Coxeter group and let $\Phi^+$ be the positive roots. 
A subset $B$ of $\Phi^+$ is called ``biclosed'' if, whenever we have roots $\alpha$, $\beta$ and $\gamma$ with $\gamma \in \RR_{>0} \alpha + \RR_{>0} \beta$, if $\alpha$ and $\beta \in B$ then $\gamma \in B$ and, if $\alpha$ and $\beta \not\in B$, then $\gamma \not\in B$.
The finite biclosed sets are the inversion sets of the elements of $W$, and the containment between finite inversion sets is the weak order on $W$.
Dyer suggested studying the poset of all biclosed subsets of $\Phi^+$, ordered by containment, and conjectured that it is a complete lattice.
As progress towards Dyer's conjecture, we classify all biclosed sets in the affine root systems.
We provide both a type uniform description, and concrete models in the classical types $\tA$, $\tB$, $\tC$, $\tD$.
We use our models to prove that biclosed sets form a complete lattice in types $\tA$ and $\tC$, and we classify which biclosed sets are separable and which are weakly separable.
\end{abstract}

\author{Grant T. Barkley and David E Speyer}
\maketitle

\section{Main result}
This is a paper about Coxeter groups, affine root systems, and the weak order.
In this section, we state our main results for the reader who is already familiar with these concepts.
We will give a more gentle introduction in \Cref{sec:NiceIntro} by explaining what our results say in type $\tA$. 
We define all of our terminology in \Cref{Background} and prove our main theorem in \Cref{sec:AffineModels}.
In \Cref{sec:separable}, we describe which of our biclosed sets are weakly separable.
In \Cref{sec:combinatorial}, we expound on what our results mean for types $\tB$, $\tC$ and $\tD$.

Let $\Phi$ be an affine root system with corresponding affine Coxeter group $W$ and let $\Phi^+$ be the positive roots.
We write $V$ for the vector space in which $\Phi$ lives.
A subset $B$ of $\Phi^+$ is called \newword{biclosed} if, for any $\alpha$, $\beta$, $\gamma \in \Phi^+$ with $\gamma \in \RR_{>0} \alpha + \RR_{>0} \beta$:
\begin{enumerate}
	\item If $\alpha$ and $\beta \in B$, then $\gamma \in B$ and
	\item if $\alpha$ and $\beta \not\in B$, then $\gamma \not\in B$.
\end{enumerate}

We pause to discuss some related notions. For $w \in W$, the set of inversions of $w$ is denoted $N(w)$ and is always biclosed.
If $\rho \in V^{\ast}$ is a vector which has $\langle \rho, \beta \rangle > 0$ for all $\beta \in \Phi^+$, then $N(w) = \{ \beta \in \Phi^+ : \langle w \rho, \beta \rangle < 0 \}$. 
More generally, for any linear functional $\theta \in V^{\ast}$ such that $\langle \theta, \beta \rangle \neq 0$ for all $\beta \in \Phi^+$, the set $\{ \beta \in \Phi^+ : \langle \theta, \beta \rangle < 0 \}$ is always biclosed; a biclosed set of this form is called \newword{separable}. Even more generally, let $\theta_1$, $\theta_2$, \ldots, $\theta_n$ be a basis of $V^{\ast}$, then the set  $\{ \beta \in \Phi^+ : \exists k,\  \langle \theta_1, \beta \rangle=\langle \theta_2, \beta \rangle=\cdots=\langle \theta_{k-1}, \beta \rangle=0 \ \text{and}\ \langle \theta_k, \beta \rangle < 0 \}$ is biclosed; a biclosed set of this form is called \newword{weakly separable}.
If $B$ is a finite set of roots then all four of these notions -- biclosed, weakly separable, separable and an inversion set -- coincide. 
In general, they do not. Biclosed sets seem to be the best replacement for inversion sets when considering infinite $B$.
For this reason, we have taken on the task of classifying biclosed sets in affine Coxeter groups, and explaining how they relate to these other notions. 

Let $W_0$ be the corresponding finite Coxeter group and let $V_0$ be the geometric representation of $W_0$.
The \newword{Coxeter fan} in $V_0^*$ is the fan defined by the reflecting hyperplanes of $W_0$; 
a \newword{face} of this fan is a cone $F$ of any dimension in this fan, including $\{ 0 \}$.
We write $\Phi_0$ for the $W_0$-root system, so there is a map $\pi: \Phi \to \Phi_0$ which equates roots that differ by a multiple of the imaginary root. 

Let $F$ be a face of the $W_0$-Coxeter fan. We write $\Phi_0^F$ for set of roots in $\Phi_0$ which are orthogonal to every vector in $F$, and we write $\Phi_F$ for the set of roots $\beta \in \Phi$ with $\pi(\beta) \in \Phi^F_0$. 
Let $W_F$ be the reflection group generated by the reflections over the roots $\beta$ in $\Phi_F$; we call $W_F$ a \newword{face subgroup} of $W$.
We can uniquely decompose $\Phi_F$ as a disjoint union of mutually orthogonal irreducible root systems, which we call the \newword{components} of $\Phi_F$.
Our biclosed sets will be denoted $B(F, \Phi', w)$, where $F$ is a face of the $W_0$-Coxeter fan,  $\Phi'$ is a union of components of $\Phi_F$ and $w$ is in $W_F$. 
(We permit the cases $\Phi' = \emptyset$ and $\Phi' = \Phi_F$.)

The biclosed set $B(F, \Phi', e)$ is given by
\[ B(F, \Phi', e) = \{ \beta \in \Phi^+ : \langle F, \beta \rangle < 0 \} \sqcup (\Phi' \cap \Phi^+). \]
The general biclosed set $B(F, \Phi', w)$ is the symmetric difference between $B(F, \Phi', e)$ and the inversion set of $w$, where we identify inversions of $w \in W_F$ with elements of $\Phi^+_F$ in a way we will spell out in \Cref{subsec:CommensurabilityClasses}.

\begin{theorem} \label{thm:Main}
	The biclosed sets of $\Phi^+$ are precisely the sets $B(F, \Phi', w)$ as above, and we have listed each biclosed set once.
\end{theorem}

This result was found independently by Matthew Dyer in an unreleased work \cite{Dyerpreprint}; as a result, this article will emphasize explicit combinatorial descriptions in types $\tA, \tB, \tC, \tD$. There has been much previous work on classifying various special biclosed sets. The family that has seen the most attention are the \newword{compatible} or \newword{biconvex} sets (see, e.g., \cite{Cellini1998,Ito1999}), which are the inversion sets of \newword{infinite reduced words} of a Coxeter group (see, e.g., \cite{Hohlweg2016,LP,Wang2018}). The results here extend this previous work.

We also summarize our results on separable and weakly separable sets.
\begin{theorem} \label{thm:SeparableMain}
	The biclosed set $B(F, \Phi', w)$ is separable if and only if either $F$ is either a maximal face of the $W_0$-Coxeter fan, or else $F = \{ 0 \}$; in the latter case, $B$ is either of the form $N(w)$ or $\Phi^+ \setminus N(w)$. 
	The biclosed set $B(F, \Phi', w)$ is weakly separable if and only if either $\Phi' = \emptyset$ or $\Phi' = \Phi_F$.
\end{theorem}

The outline of this article is as follows. In \Cref{sec:NiceIntro}, we give a concrete introduction to our results in the case of the affine symmetric group. In \Cref{Background}, we describe the necessary background to prove \Cref{thm:Main}. \Cref{sec:AffineModels} is devoted to the proof of \Cref{Parametrization}, which gives a uniform description of affine-type biclosed sets in terms of faces of the Coxeter fan, refining \Cref{thm:Main}. 
In \Cref{sec:separable}, we discuss relations to separable and weakly separable sets.
In  \Cref{sec:combinatorial}, we give explicit combinatorial models for biclosed sets in the classical affine Coxeter groups and prove the results in \Cref{sec:NiceIntro}. 
Our combinatorial models in \Cref{sec:combinatorial} can be mostly understood independently of Sections~\ref{sec:AffineModels} and~\ref{sec:separable}.

GTB was supported by NSF grant DMS-1854512 and as an REU student by NSF grant DMS-1600223. DES was supported by NSF grants DMS-1600223, DMS-1854225 and DMS-1855135. The authors would also like to thank Matthew Dyer for helpful communications.

The results in this paper were described in an extended abstract appearing in the proceedings of FPSAC 2022 \cite{FPSAC2022}, and some of our text, especially in Sections~\ref{sec:NiceIntro}, \ref{Background} and the early parts of \ref{sec:AffineModels}, has overlap with that abstract. 
The proofs, the results of \Cref{sec:separable}, and the examples in \Cref{sec:combinatorial}, are all new to this paper.

\section{Motivating example: The affine symmetric group} \label{sec:NiceIntro}
In this section, we illustrate our results by seeing how they apply to the affine symmetric group $\tS_n$.
Some of the notation in this section will be replaced with more general notation in the following sections.

The symmetric group $S_n$ is the group of permutations of $[n]\coloneqq \{ 1, 2, \ldots, n \}$.
Put $T = \{ (i, j) : 1 \leq i < j  \leq n \}$. For $\sigma \in S_n$, the set of \newword{inversions} of $\sigma$ is
\[ I(\sigma) : = \{ (i,j) \in T : \sigma^{-1}(i) > \sigma^{-1}(j) \} . \]

The (right) weak order on $S_n$ is the partial order on $S_n$ defined by $\sigma \leq \tau$ if and only if $I(\sigma) \subseteq I(\tau)$. 
Surprisingly, weak order is a complete lattice, meaning that every subset of $S_n$ has a unique least upper bound (its \newword{join}) and a unique greatest lower bound (its \newword{meet}).

The group $\tS_n$ is the group of bijections $\widetilde{\sigma} : \ZZ \to \ZZ$, obeying the conditions:
\[ \widetilde{\sigma}(i+n) = \widetilde{\sigma}(i) + n \quad\text{ and }\quad \sum_{k=1}^n (\widetilde{\sigma}(k)-k) = 0. \]
Put $\tT = \{ (i,j) \in \ZZ^2/{\big(} \ZZ \cdot (n,n) {\big)} : i < j,\ i \not\equiv j \bmod n \}$. To be clear, $\ZZ^2/{\big(} \ZZ \cdot (n,n) {\big)}$ is the quotient of $\ZZ^2$ by the subgroup  $\ZZ \cdot (n,n)$. We will abuse notation by referring to a coset $(i,j)+\ZZ\cdot(n,n)$ by a representative element $(i,j)$.
For $\widetilde{\sigma} \in \tS_n$, the set of \newword{inversions} of $\widetilde{\sigma}$ is
\[ I(\widetilde{\sigma}) : = \{ (i,j) \in \tT : \widetilde{\sigma}^{-1}(i) > \widetilde{\sigma}^{-1}(j) \} . \]
We once again define weak order on $\tS_n$ by $\widetilde{\sigma} \leq \widetilde{\tau}$ if and only if $I(\widetilde{\sigma}) \subseteq I(\widetilde{\tau})$. 

This time, weak order is only a complete meet-semilattice. This means that, if a subset $\cX$ of $\tS_n$ has an upper bound, then it has a least upper bound, but no upper bound need exist at all. 
(Every non-empty subset of $\tS_n$ continues to have a greatest lower bound.) 

Weak order has many applications, for example in the representation theory of quivers and in the structure of cluster algebras, and for many of these purposes we would like to extend the weak order on $\tS_n$ to some larger poset which is a complete lattice. 
It is natural to imagine doing this by defining some collection of subsets of $\tT$ which would play the role of ``generalized inversion sets''. 
Such a definition was proposed and studied by Matthew Dyer~(see, e.g., \cite{Dyer2019}); we will describe it here for the case of $S_n$ and $\tS_n$ and for a general Coxeter group in the next section.

We define a subset $J$ of $T$ to be \newword{closed} if, whenever $1 \leq i < j < k \leq n$ and $(i,j)$ and $(j,k) \in J$, then $(i,k) \in J$.

We define a subset $J$ of $\tT$ to be \newword{closed} if it obeys the four conditions
\begin{enumerate}
	\item If $i<j<k$ with $(i,j)$ and $(j,k) \in J$, and $i\not\equiv k \bmod n$, then $(i,k) \in J$.
	
	\item If $i<j<k$ with $(i,j)$ and $(j,k) \in J$, and $i\equiv k \bmod n$, then $(i,j+\ell n)$ and $(j,k+\ell n) \in J$ for all $\ell\geq 0$.
	
	\item If $i<j<k$ with $(i,j)$ and $(i,k) \in J$, and $j\equiv k \bmod n$, then $(i,j+\ell n) \in J$ for all $\ell$ such that $j \leq j+\ell n \leq k$.
	
	\item If $i<j<k$ with $(i,k)$ and $(j,k)\in J$, and $j\equiv k \bmod n$, then $(i+\ell n,k)\in J$ for all $\ell$ such that $i\leq i+\ell n\leq j$.
\end{enumerate}
Note that conditions (3) and (4) are equivalent; we have included both for clarity.
We define $J$ to be \newword{coclosed} if the complement of $J$ is closed, and we define $J$ to be \newword{biclosed} if $J$ is both closed and coclosed. 
We note that the intersection of any collection of closed sets is closed, and, for an arbitrary subset $K$ of $T$ or $\tT$, we define the \newword{closure} $\overline{K}$, to be the intersection of all closed sets containing $K$. 
Similarly, the union of any collection of coclosed sets is coclosed; we define the \newword{interior} $K^{\circ}$ to be the union of all coclosed sets contained in $K$.

The following is a special case of a result for all Coxeter groups (see, e.g., \cite{Dyer2019}):
\begin{proposition}
	The finite biclosed subsets of $T$ (respectively $\tT$) are exactly the sets of inversions of the permutations in $S_n$ (respectively $\tS_n$).
\end{proposition}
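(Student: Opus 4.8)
The plan is to prove both inclusions by passing back and forth between a biclosed set and a total order on the underlying positions; the two closure axioms are exactly what is needed to make that order transitive. I will treat $S_n$ first, where the argument is transparent, and then explain the extra bookkeeping forced by the affine wrap-around.

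For the easy direction, fix $\sigma$ (resp.\ $\tilde\sigma$) and record its one-line notation as the total order $a \prec b$ meaning $\sigma^{-1}(a) < \sigma^{-1}(b)$ on $[n]$ (resp.\ the $n$-periodic total order on $\ZZ$ with $i \prec i+n$ determined by $\tilde\sigma^{-1}$). Under the dictionary ``$(i,j) \in I(\sigma)$ iff $j \prec i$'' for $i<j$, each closure axiom becomes a true statement about $\prec$: axiom (1) and the corresponding condition for the complement are just transitivity of $\prec$; the premise of axiom (2) would assert $k \prec j \prec i$ while periodicity forces $i \prec k$, so it is vacuous; and axiom (3) records that, within a single residue class, $\tilde\sigma^{-1}$ is increasing, so the set of class members inverted against a fixed $i$ is an interval. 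Since $I(\sigma)$ and its complement both satisfy the axioms, $I(\sigma)$ is biclosed, and it is finite because a permutation (resp.\ affine permutation) has only finitely many inversions.

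For the converse, start from a finite biclosed $J$ and reverse the dictionary: declare $a \prec b$ (for $a<b$) to mean $(a,b) \notin J$ when $a \not\equiv b \bmod n$, and $a \prec b$ whenever $a \equiv b \bmod n$. This is a tournament on $[n]$ (resp.\ an $n$-periodic tournament on $\ZZ$), and a tournament is a total order as soon as it has no directed $3$-cycle. In type $S_n$ there is nothing more to do: the two possible oriented triangles on $i<j<k$ are ruled out precisely by axiom (1) for $J$ and for its complement, so $\prec$ is a linear order; listing $[n]$ in $\prec$-increasing order yields the one-line notation of a permutation $\sigma$, and $J = I(\sigma)$ by construction.

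The affine case requires handling triples $i<j<k$ in which two of the three are congruent modulo $n$, and this is the main obstacle. When all three residues are distinct the $S_n$ argument applies verbatim. When $i \equiv k$ (resp.\ $j \equiv k$), the periodic relation $i \prec k$ (resp.\ $j \prec k$) is forced, so a directed triangle can occur in only one orientation; I claim that orientation matches the premise of axiom (2) for $J$ or for its complement once the residues are lifted to suitable integer representatives, using that $(i,j)$ and $(i+n,j+n)$ name the same element of $\tT$. Since the conclusion of axiom (2) places infinitely many elements $(i, j+\ell n)$ into $J$ (or into its complement), finiteness of $J$ forbids that premise, hence forbids the cycle; axiom (3) is what guarantees the lift can be chosen consistently within a residue class. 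Thus $\prec$ is again a total order. It remains to see that it is realizable: $n$-periodicity together with $i \prec i+n$ makes $(\ZZ, \prec)$ unbounded in both directions, and finiteness of $J$ forces $\prec$ to agree with the usual order outside a finite set, so $(\ZZ,\prec) \cong (\ZZ,<)$; the unique order isomorphism normalized by $\sum_{k=1}^n(\tilde\sigma(k)-k)=0$ is $\tilde\sigma^{-1}$ for a well-defined $\tilde\sigma \in \tS_n$, and $J = I(\tilde\sigma)$. The delicate points are the representative-chasing that reduces the wrap-around $3$-cycles to axiom (2) and the verification that the reconstructed order has order type $\ZZ$; everything else is bookkeeping.
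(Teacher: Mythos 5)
Your converse's central step---eliminating directed $3$-cycles in which two of the three indices are congruent modulo $n$---does not go through as described. When $j \equiv k \bmod n$ (with $i<j<k$, so $j \prec k$ is forced), the only possible cycle is $j \prec k$, $k \prec i$, $i \prec j$, which translates to the \emph{mixed} configuration $(i,k) \in J$ and $(i,j) \notin J$. This is not an instance of the premise of axiom (2) for $J$ (which needs two chained pairs $(a,b),(b,c)$ \emph{both} in $J$) nor for its complement (which needs both outside $J$), and no re-choosing of representatives modulo $(n,n)$ helps: the pairs $(i,j)$ and $(i,k)$ share a left endpoint and never become a chain. Worse, your stated mechanism---``the conclusion of axiom (2) places infinitely many elements into $J$ (or into its complement), so finiteness forbids that premise''---is only valid for the $J$-version: putting infinitely many elements into the complement of a finite set is no contradiction whatsoever (the complement-(2) premise is satisfied all the time, e.g.\ when $J = \emptyset$). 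So the $i \equiv k$ case (both pairs in $J$, a genuine premise of axiom (2), killed by finiteness) is the only one your reduction handles; the $i \equiv j$ case shifts, via $(j,k) \sim (i, k-pn)$, into the broken $j \equiv k$ case. The step you yourself flag as delicate is exactly the one that fails.

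The gap is local and fixable, but by a different mechanism than the one you name. Since $J \subseteq \tT$ is finite, $(i, j+Ln) \notin J$ for all sufficiently large $L$; applying the \emph{coclosed} version of axiom (3) to the two non-elements $(i,j)$ and $(i,j+Ln)$ forces every intermediate $(i, j+\ell n)$, in particular $(i,k)$, out of $J$---the desired contradiction. (Equivalently, instantiate complement-(2) on $(i,j)$ and $(j, i+rn)$ for $r$ large and read off its conclusion against $(i,k) \in J$; either way the premise is \emph{used}, not forbidden.) Note that axiom (3) is thus the engine of this case, not merely a device ``to choose lifts consistently.'' A smaller imprecision: finiteness of $J$ does not make $\prec$ agree with $<$ ``outside a finite set'' of integers---each class $(a,b) + \ZZ(n,n)$ in $J$ contributes infinitely many inverted integer pairs---though your intended conclusion $(\ZZ, \prec) \cong (\ZZ, <)$ is correct and provable via the positional count $a \mapsto a + \#\{b > a : b \prec a\} - \#\{b < a : a \prec b\}$, which is well defined because each inversion class meets each left endpoint at most once. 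For comparison: the paper gives no direct proof at all, deducing the proposition from the general Coxeter-theoretic fact (\Cref{FiniteBiclosed}, due to Dyer) that finite biclosed sets are exactly the inversion sets $N(w)$; so a patched version of your self-contained tournament argument would be a legitimately different route, but as written it has a real hole at its key step.
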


The set $T$ is finite, so all subsets of $T$ are finite. But $\tT$ is infinite, so it is natural to consider all biclosed subsets of $\tT$, without the assumption of finiteness. 

The following is a special case of a conjecture by Matthew Dyer. Dyer tells us that he already checked this case; we will give the first published proof in \Cref{sec:combinatorial}.
\begin{theorem} \label{AtildeLattice}
	The collection of all biclosed subsets of $\tT$, ordered by containment, is a complete lattice. For any collection of biclosed sets $\cX$, we have $\bigvee \cX = \overline{\bigcup_{J \in \cX} J}$ and $\bigwedge \cX = \left( \bigcap_{J \in \cX} J \right)^{\circ}$. 
\end{theorem}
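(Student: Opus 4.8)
The plan is to realize the biclosed sets as a join-closed subfamily of the complete lattice of all closed sets. Since the intersection of any family of closed subsets of $\tT$ is closed, the closed sets form a closure system and hence a complete lattice $\mathcal{L}_{\mathrm{cl}}$, in which the meet of a family is its intersection and the join of a family is $\overline{\bigcup}$ of that family. The biclosed sets $\mathcal{B}$ form a subposet of $\mathcal{L}_{\mathrm{cl}}$, and $\emptyset$ and $\tT$ both lie in $\mathcal{B}$. I would invoke the standard fact that a poset in which every subset has a join is automatically a complete lattice (meets being recovered as joins of lower bounds). Thus, once I know that $\mathcal{B}$ contains the empty join $\emptyset$ and is closed under arbitrary $\mathcal{L}_{\mathrm{cl}}$-joins, it follows both that $\mathcal{B}$ is a complete lattice and that its join is the inherited join $\overline{\bigcup_{J \in \cX} J}$: indeed, that set is closed, contains each $J$, and is contained in any closed (in particular any biclosed) upper bound. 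So the whole theorem reduces to the single assertion that $\overline{\bigcup_{J \in \cX} J}$ is again \emph{biclosed}, i.e.\ coclosed, whenever each $J \in \cX$ is biclosed.

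Granting this, the meet formula would follow by complementation rather than by a separate argument. Writing $K^{c} = \tT \setminus K$, complementation is an order-reversing involution of $\mathcal{B}$, since a set is coclosed precisely when its complement is closed; and the closure and interior operators are exchanged by it, $(\overline{K})^{c} = (K^{c})^{\circ}$, because the smallest closed set containing $K$ complements to the largest coclosed set inside $K^{c}$. In any complete lattice an order-reversing involution turns joins into meets, so $\bigwedge \cX = \bigl(\bigvee_{J \in \cX} J^{c}\bigr)^{c} = \bigl(\overline{\bigcup_{J \in \cX} J^{c}}\bigr)^{c} = \bigl(\bigcup_{J \in \cX} J^{c}\bigr)^{\circ}$, and since $\bigl(\bigcup_{J} J^{c}\bigr)^{c} = \bigcap_{J} J$ this is exactly $\bigl(\bigcap_{J \in \cX} J\bigr)^{\circ}$, as claimed.

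Everything therefore rests on the key lemma that closing up a union of biclosed sets never destroys coclosedness, and this is where I expect the real difficulty to lie. The obstacle is genuinely affine: in finite type only the transitivity rule (1) is present and the analogous statement can be checked by a short induction on a single closure step, but in $\tT$ the closure is also generated by the periodic rule (2) and the interval-filling rule (3), each of which adjoins infinitely many (or a whole range of) new pairs at once. Consequently a naive induction that adds one pair at a time and checks that the complement stays closed does not obviously go through, and tracking the interaction of rules (2) and (3) with the coclosedness of the individual $J$ seems fragile.

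To prove the key lemma I would avoid arguing rule-by-rule and instead use the explicit combinatorial models of \Cref{sec:combinatorial} together with the classification in \Cref{thm:Main}. The strategy is to write down a candidate for $\overline{\bigcup_{J \in \cX} J}$ directly in the model and verify that it has the classified form $B(F, \Phi', w)$, which is biclosed by construction. Heuristically, $F$ should be the finest flag (ordered set partition of $[n]$) compatible with the behaviour of all the $J \in \cX$ at infinity, $\Phi'$ the union of the components forced into the union, and $w$ the parahoric part assembled from the finite data; the computational heart of the argument is then to check that this candidate both is closed (so equals the closure of the union) and agrees with $\bigcup_{J} J$ up to the closure rules on every pair $(i,j)$. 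I expect the most delicate bookkeeping to be matching the periodic behaviour produced by rules (2) and (3) on the union against the periodic behaviour prescribed by the pair $(F, \Phi')$, and this is the step I would budget the most care for.
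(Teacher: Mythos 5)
Your opening reductions are sound and in fact match the skeleton of the paper's argument: joins should be inherited from the complete lattice of closed sets, and the meet formula does follow from the join formula via the order-reversing involution $K \mapsto \tT \setminus K$ together with $(\overline{K})^{c} = (K^{c})^{\circ}$ (the paper treats the meet "analogously" rather than by duality, but your version is valid). The genuine gap is in the only step that carries weight, your key lemma, where the proposal stops at a heuristic. To exhibit $\overline{\bigcup_{J \in \cX} J}$ as some $B(F, \Phi', w)$ you must first extract the limiting data, but you cannot define $F$ and $\Phi'$ as the behaviour "at infinity" of the closure without already knowing the closure is biclosed — that is circular — and defining them directly from the family $\cX$ runs into the real obstruction: the "parahoric part assembled from the finite data" need not exist. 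The parahoric $W_F$ is itself a product of affine Coxeter groups whose weak order is only a semilattice, so for an infinite (or merely unbounded) family the residual finite data has no join in $W_F$; the correct join then lives over a strictly coarser face $F$, with some components absorbed into $\Phi'$, and controlling exactly when and how this degeneration happens is the entire difficulty. This is not routine bookkeeping: the authors prove the lattice property only in types $\tA$ and $\tC$ in this paper and explicitly defer a classification-based proof for general affine types to future work, which is a strong hint that the route you sketch requires substantially more than verifying a candidate.

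The paper's actual proof of \Cref{AtildeLattice} avoids ever proving the key lemma inside $\tT$. It proves it only for $A_\infty$ (\Cref{lem:KeyClosureLemma}), where closure is plain transitive closure and a chain-interpolation argument works — precisely the "one-rule" situation you correctly identified as tractable — giving \Cref{AInftyLattice} with your join and meet formulas. It then identifies $\bic(A_\infty)$ with total orders on $\ZZ$, shows the translation-invariant orders $L_n$ form a \emph{complete sublattice} of $\bic(A_\infty)$ (they are the fixed points of a $\ZZ$-action by poset automorphisms), and realizes $\bic(\tAn)$ as the fixed-point set of the idempotent order-preserving retraction $p = \iota \circ \pi$ of $L_n$ (\Cref{lem:OneSidedInverse}, \Cref{lem:PiIotaMaps}); the image of such a retraction of a complete lattice is again a complete lattice, with join $p\left(\bigvee_{L_n} \cX\right)$ (\Cref{thm:TypeALattice}). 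The conceptual point your proposal misses is that rules (2) and (3) in $\tT$ are exactly the shadows of transitivity plus translation invariance in $A_\infty$, so the fragile rule-by-rule interaction you worried about can be traded for a single transitive-closure computation upstairs; the only subtlety in the transfer is ruling out that $\iota$ flips a residue class from increasing to decreasing, which is the content of \Cref{lem:PiIotaMaps}. To repair your write-up, either carry out this transfer, or supply an actual proof (not a plan) that the closure of a coclosed subset of $\tT$ is coclosed.
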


We now give a combinatorial model for the biclosed subsets of $\tT$.
Define a \newword{translationally invariant total order} on $\ZZ$ to be a total order $\prec$ obeying the condition that $i \prec j$ if and only if $i+n \prec j+n$.
Each translationally invariant total order $\prec$ defines a biclosed subset $I(\prec)$ of $\tT$ by $I(\prec)  = \{ (i,j) \in \tT : i \succ j \}$. 

It turns out that any biclosed subset of $\tT$ is of this form, but not in a unique way. 
To see the issue, take $n=2$, and consider the following total orders on $\ZZ$:
\[ \cdots\prec_1 -5 \prec_1 -3 \prec_1 -1 \prec_1 1 \prec_1 3 \prec_1 5 \prec_1 \cdots \prec_1 -4 \prec_1 -2 \prec_1 0 \prec_1 2 \prec_1 4 \prec_1 \cdots \]
\[ \cdots\prec_2 -5 \prec_2 -3 \prec_2 -1 \prec_2 1 \prec_2 3 \prec_2 5 \prec_2 \cdots \prec_2 4 \prec_2 2 \prec_2 0 \prec_2 -2 \prec_2 -4 \prec_2 \cdots . \]
In other words, both $\prec_1$ and $\prec_2$ put all the odd numbers before all the even numbers, but $\prec_1$ preserves the standard ordering within each parity class whereas $\prec_2$ reverses the even numbers. 
Then $I(\prec_1) = I(\prec_2) = \{ (i,j) : i<j,\ i \equiv 0 \bmod 2,\ j \equiv 1 \bmod 2 \}$. 


\begin{theorem} \label{AtildeModel}
	Biclosed subsets of $\tT$ are in bijection with translationally invariant total orders, modulo reversing the order on intervals of the form
	\[ \cdots \prec k-3n \prec k-2n \prec k-n \prec k \prec k+n \prec k+2n \prec k+3n \prec \cdots.  \]
\end{theorem}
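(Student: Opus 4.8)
The plan is to analyze the map $\prec \mapsto I(\prec)$ directly, proving it is surjective and computing its fibers. The starting observation is that $\tT$ consists exactly of the pairs $(i,j)$ lying in \emph{distinct} residue classes modulo $n$, so $I(\prec)$ records precisely the $\prec$-comparisons between elements of different residue classes and says nothing about comparisons within a single class. Hence $I(\prec)=I(\prec')$ if and only if $\prec$ and $\prec'$ induce the same between-class comparisons. The first thing I would prove is a purely structural lemma: translation invariance forces each residue class $\{k+\ell n : \ell \in \ZZ\}$ to be ordered monotonically, i.e. $\prec$ restricted to the class agrees either with $<$ or with $>$ (apply invariance to the pair $(k,k+n)$ and induct). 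Thus a translationally invariant total order is determined by its between-class comparisons together with a choice of orientation in $\{+,-\}$ for each of the $n$ residue classes.

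Granting this, I would pin down the fibers, which simultaneously gives well-definedness and injectivity of the descended map. Call a residue class \emph{convex} for $\prec$ if no element of another class lies strictly between two of its elements; since this only refers to between-class comparisons, convexity of a class depends only on $B := I(\prec)$. If a class is not convex, say $y \prec x \prec z$ with $y,z$ in the class and $x$ outside, then the two displayed comparisons are recorded by $B$ and force $y \prec z$ by transitivity, which determines the orientation of that class; so non-convex classes have $B$-forced orientation. If a class is convex, then every outside element lies entirely above or entirely below its block, so reversing the class changes no between-class comparison and preserves transitivity; this reversal is exactly the operation on intervals $\cdots \prec k-n \prec k \prec k+n \prec \cdots$ in the statement. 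Hence the fiber over $B$ is a single equivalence class under reversal of convex classes, which is precisely the stated equivalence relation.

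It remains to prove surjectivity, which I expect to be the main obstacle. Given a biclosed set $B \subseteq \tT$, define the between-class relation $\prec_B$ by declaring, for $i<j$ in different classes, that $i \succ_B j$ precisely when $(i,j)\in B$; this totally orders each pair of distinct classes. I must produce orientations for the $n$ classes so that the combined relation (between classes by $\prec_B$, within classes by the chosen orientations) is transitive, hence a translationally invariant total order realizing $B$; totality and antisymmetry are automatic, so only transitivity is at stake. Checking transitivity amounts to checking triples, organized by how many of the three elements share a residue class: triples in three distinct classes are controlled by closure condition (1) and its counterpart for the complement; triples lying in a single class are automatic once orientations are monotone; and the mixed triples, with two elements in one class and one in another, are governed by conditions (2) and (3) and their complementary versions, which is where biclosedness does the real work. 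The heart of the argument is to show these conditions guarantee that the orientation forced on each non-convex class is well defined — no class is forced to be simultaneously increasing and decreasing, equivalently the transitive closure of $\prec_B$ has no cycle inside a class — and that the forced orientations are mutually compatible; one then orients each remaining convex class increasingly to complete the total order.

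Finally, I would note that surjectivity can alternatively be deduced from the uniform classification in \Cref{thm:Main,Parametrization}: from the data $(F,\Phi',w)$ one reads off an ordered grouping of the residue classes from $F$, the free or forced orientations of the convex classes from $\Phi'$, and the within-class rearrangements from $w$, matching each biclosed set with an order modulo reversal. I would use whichever route is cleaner in the surrounding development, but the self-contained combinatorial verification above keeps \Cref{AtildeModel} independent of \Cref{sec:AffineModels}.
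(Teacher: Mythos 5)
Your fiber analysis is correct and is in fact more careful than what the paper writes down: translation invariance does force each residue class $k+n\ZZ$ to be ordered monotonically (from $k \prec k+n$ and transitivity), convexity of a class is visible from $B=I(\prec)$ alone, non-convex classes have their orientation forced by transitivity through an interleaving witness, and reversal of a convex class is exactly the move in the statement of \Cref{AtildeModel}. So the well-definedness and injectivity of the descended map, i.e.\ the ``modulo reversal'' part, is solid in your write-up.

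The gap is surjectivity by your self-contained route: you state the crux precisely --- ``the heart of the argument is to show these conditions guarantee that the orientation forced on each non-convex class is well defined \dots and that the forced orientations are mutually compatible'' --- but you never prove it, and this is where all the work lives. Analyzing one pair of classes (an $\tA_1$-subsystem) shows biclosedness forces that pair to be either separated or interleaved with a common \emph{absolute} orientation: both classes increasing when $B$ meets the subsystem in a finite set, both decreasing when cofinite. Nothing in your outline excludes the configuration where class $C$ is interleaved-increasing with $D$ (finite intersection) but interleaved-decreasing with $E$ (cofinite intersection); ruling this out requires chaining conditions (1)--(3) across three residue classes, which amounts to re-proving a piece of the classification, not a routine check. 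The paper sidesteps this entirely: its proof of \Cref{AtildeModel} is what you call the alternative route, deducing everything from \Cref{Parametrization} --- faces of the Coxeter fan correspond to ordered set partitions of $[n]$, $\Phi'$ to a set of reversed non-singleton blocks, and $w$ to affine permutations rearranging each block, with singleton blocks accounting exactly for the free reversals. If you invoke \Cref{Parametrization} your proposal is essentially the paper's proof (plus a sharper fiber analysis); as the advertised independent combinatorial argument, it is incomplete until the orientation-consistency lemma is actually established.
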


The main result of this paper is not simply \Cref{AtildeModel}, but a generalization of this to all affine Coxeter groups.
In the next section, we will introduce the vocabulary necessary to prove our results.

\section{Background}\label{Background}

\subsection{Coxeter groups and root systems}
Let $(m_{ij})_{i,j=1}^n$ be a symmetric matrix with $m_{ii}=1$ for all $i$, and  $m_{ij} = m_{ji} \in \{ 2,3,4,\ldots, \infty \}$ for $i\neq j$. 
Such a matrix is called a \newword{Coxeter matrix}. A \newword{Coxeter group} is a group with a presentation of the form
\[ W = {\Big\langle} s_1,\ldots,s_n {\Big|} (s_is_j)^{m_{ij}} = 1, \ 1\leq i,j\leq n {\Big\rangle} \]
for some Coxeter matrix $(m_{ij})$. In this case we say $(W,S)$ is a \newword{rank $n$ Coxeter system}, where $S=\{s_1,\ldots,s_n\}$ is the set of \newword{simple reflections}. The \newword{length} $\ell(w)$ of an element $w$ is the minimal number $k$ of simple reflections in any word $s_1\cdots s_k = w$. We further write
\[T=\{wsw^{-1} : w\in W, s\in S \}\]
for the set of \newword{reflections} in $W$.

Associated to each Coxeter group is a faithful $n$-dimensional representation $V$ called the \newword{geometric} or \newword{reflection representation}. Fix a (real-valued) \newword{Cartan matrix} $(A_{ij})_{i,j=1}^n$ such that:
\[ \begin{array}{r@{\ }c@{\ }l@{\ }l@{\qquad}l}
	A_{ii} &=& 2 & \\
	A_{ij} &=& 0 && \text{for} \ m_{ij} = 2 \\
	A_{ij} A_{ji} &=& 4 \cos^2 \tfrac{\pi}{m_{ij}},& A_{ij} < 0 &\text{for} \  2 < m_{ij} < \infty \\
	A_{ij} A_{ji} &\geq& 4 ,&  A_{ij} < 0 & \text{for} \ m_{ij} = \infty. 
\end{array} \]


The Cartan matrix $A$ is called \newword{crystallographic} if every $A_{ij}$ is an integer.
Our primary results are for crystallographic Cartan matrices, although we will state some early results in the generality of real Cartan matrices.
If $A$ is crystallographic, then we must have $4 \cos^2 \tfrac{\pi}{m_{ij}} \in \ZZ$ for $m_{ij} < \infty$, and hence $m_{ij} \in \{ 2,3,4,6,\infty \}$ for all $i \neq j$.

The \newword{geometric representation} is a representation $V$ with basis $\a_1$, \dots, $\a_n$. We will also want to work with the dual representation $V^{\ast},$ which has dual basis $\omega_1$, $\omega_2$, \dots, $\omega_n$. We write $\langle\cdot,\cdot\rangle$ for the pairing $V^*\times V\to \RR$. We put $\alpha_j^{\vee} = \sum_i A_{ij} \omega_i$, so $\langle \alpha_j^{\vee},\alpha_i \rangle = A_{ij}$.  (Note that $\alpha_i^{\vee}$ need not be a basis of $V^{\ast}$.)
Then $W$ acts on $V$ by $s_i(v) = v - \langle \alpha_i^{\vee}, v \rangle \alpha_i$ and acts on $V^{\ast}$ by the dual formula $s_i(f) = f - \langle f, \alpha_i \rangle \alpha_i^{\vee}$. 
The representations $V$ and $V^{\ast}$ are faithful representations of $W$; see~\cite[Section 4.1-4.3]{Bjorner2005}.

The (real) \newword{roots} are the vectors of the form $w \alpha_i$ for $w \in W$, $1 \leq i \leq n$. Each root is either \newword{positive}, meaning in the positive linear span of the $\alpha_i$, or \newword{negative}, meaning the negation of a positive root. We write $\Phi$ and $\Phi^+$ for the sets of roots and positive roots respectively; these are called the \newword{root system} and the \newword{positive root system}. 

The reflections in $T$ are precisely those elements of $W$ which act on $V$ and $V^{\ast}$ by involutions fixing a codimension $1$ subspace. 
Specifically, for each $t \in T$, there is a positive root $\beta_t$ and positive co-root $\beta^{\vee}_t$ such that $t(v) = v - \langle \beta_t^{\vee}, v \rangle \beta_t$. We will assume $\beta_t$ and $\beta^\vee_t$ are uniquely determined by $t$ (i.e., that our root system is \newword{reduced}); 
this is true for all root systems considered in this paper. Conversely, for any $\beta \in \Phi$ there is a unique reflection $t_\beta \in T$ such that $t_\beta \beta=-\beta$, called the \newword{reflection over $\beta$}.

We are interested in the weak order on $W$. To this end, define the set of (left) \newword{inversions} of a group element $w\in W$ to be
\[N(w) = \{\beta_t : t\in T,~ \ell(tw) < \ell(w) \}. \]
The inversion set also admits a geometric description: Take a point $f$ in $V^{\ast}$ such that $\langle f, \alpha_i \rangle >0$ for each $i$, then $\beta$ is an inversion of $w$ if and only if $\langle w f, \beta \rangle < 0$. We say $w\leq v$ in \newword{weak order} if $N(w)\subseteq N(v)$. 


%
%
%
%
%
%
%
A subset $\Phi'$ of $\Phi$ is called a \newword{subsystem} of $\Phi$ if $\Phi'$ is preserved by reflections over any $\beta\in \Phi'$. We define $\Phi'_+ = \Phi'\cap \Phi^+$ and $\Phi'_-=\Phi'\cap \Phi^-$. The following is a collection of results from \cite{Dyer1991} and \cite{Dyer1990}.

\begin{proposition}
	Let $\Phi'$ be a subsystem of $\Phi$. There is a unique minimal set $\Pi'\subseteq \Phi'_+$ such that 
	\[ \RR_{\geq 0}\Pi'\cap \Phi' = \Phi'_+ = -\Phi'_-. \]
	Let $W'$ be the subgroup of $W$ generated by reflections over the roots in $\Phi'$, and let $S'$ be the reflections over elements of $\Pi'$. Then $(W',S')$ is a Coxeter system, and the bijection between $T$ and $\Phi^+$ restricts to a bijection between $\Phi'_+$ and reflections in $W'$. Furthermore,
	\[ S' = \{t\in T : N(t)\cap \Phi' = \{\beta_t\} \}. \]
	If additionally the span of $\Phi'$ is 2-dimensional, then $|\Pi'|=2$. 
\end{proposition}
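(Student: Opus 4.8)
The plan is to treat $\Phi'$ as a root system in its own right, sitting inside $\Phi$, and to read off all of the asserted structure from the geometry of the cone spanned by $\Phi'_+$ together with the general reflection-subgroup machinery of Deodhar and Dyer. Throughout I would fix a functional $f \in V^\vee$ with $\langle f, \alpha_i \rangle > 0$ for all $i$, so that $f$ is strictly positive on $\Phi^+$, strictly negative on $\Phi^-$, and hence nonzero on every root. The equality $\Phi'_+ = -\Phi'_-$ is immediate from the definitions, since $\Phi'$ is stable under negation and $\Phi^- = -\Phi^+$.

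First I would construct $\Pi'$ and establish its minimality. Define $\Pi'$ to be the set of $\beta \in \Phi'_+$ that are \emph{indecomposable}, meaning $\beta$ is not of the form $c_1 \gamma_1 + c_2 \gamma_2$ with $c_1, c_2 > 0$ and $\gamma_1, \gamma_2 \in \Phi'_+ \setminus \{\beta\}$. A descending induction on the value $\langle f, \beta \rangle$ then shows that every element of $\Phi'_+$ is a nonnegative combination of $\Pi'$: if $\beta$ is decomposable, each summand $\gamma_i$ has strictly smaller $f$-value, so we may recurse. This gives $\Phi'_+ \subseteq \RR_{\geq 0} \Pi' \cap \Phi'$, and conversely any root $\gamma \in \RR_{\geq 0}\Pi'$ satisfies $\langle f, \gamma \rangle \geq 0$, hence $\langle f, \gamma \rangle > 0$, so $\gamma \in \Phi'_+$; this yields the displayed equality. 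For uniqueness and minimality, observe that using reducedness (so the only roots proportional to $\beta$ are $\pm \beta$), an indecomposable root cannot be a nonnegative combination of positive roots of $\Phi'$ other than itself. Hence any set with the spanning property must contain all of $\Pi'$, so $\Pi'$ is the unique minimal such set.

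The crux is showing $(W', S')$ is a Coxeter system, and this is where I expect the main obstacle. The essential reduction is to rank-$2$ subsystems: for any $\beta, \gamma \in \Pi'$, the roots of $\Phi'$ lying in $\spn(\beta, \gamma)$ form a (possibly infinite) dihedral subsystem whose only indecomposable positive roots are $\beta$ and $\gamma$, and the order of $t_\beta t_\gamma$ is read off from the Cartan pairing. Granting this, one invokes the reflection-subgroup theory of Deodhar and Dyer: a subgroup of $W$ generated by reflections, equipped with the simple system $\Pi'$ and the length function $w \mapsto |N(w) \cap \Phi'|$, satisfies the exchange/deletion condition, and is therefore a Coxeter system with simple reflections $S'$. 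I would verify this geometrically by tracking how $N(w) \cap \Phi'$ changes by exactly one root under right multiplication by a descent in $S'$, mirroring the ambient group with $\Phi$ replaced by $\Phi'$. The genuinely hard content is precisely this exchange-condition verification; the cone geometry of the first step and the bookkeeping of the last step are comparatively routine.

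Finally I would deduce the remaining assertions. Every reflection of $(W', S')$ has the form $w' s (w')^{-1}$ with $w' \in W'$ and $s \in S'$; its root $w' \beta_s$ lies in $\Phi'$ since $\Phi'$ is $W'$-stable, and conversely the reflection over any root of $\Phi'_+$ lies in $W'$. This shows the bijection $t \leftrightarrow \beta_t$ restricts to one between $\Phi'_+$ and the reflections of $W'$. For the characterization of $S'$, note that $N(t_\beta) \cap \Phi'$ is exactly the set of positive roots of $\Phi'$ inverted by $t_\beta$; the rank-$2$ analysis above shows this set equals $\{\beta\}$ precisely when $\beta$ is indecomposable, i.e. when $t_\beta \in S'$ (and the condition $N(t)\cap\Phi' = \{\beta_t\}$ already forces $\beta_t \in \Phi'$, hence $t \in W'$). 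Lastly, when $\spn \Phi'$ is $2$-dimensional, $\Phi'_+$ is a set of vectors in an open half-plane whose cone is bounded by exactly two extreme rays; the indecomposable roots are the roots on these two rays, so $|\Pi'| = 2$.
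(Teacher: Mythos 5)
First, a point of comparison: the paper does not prove this proposition at all. It is stated explicitly as ``a collection of results from \cite{Dyer1991} and \cite{Dyer1990},'' so there is no internal argument to match; your proposal has to stand on its own, and in its present form it does not. The step you describe as routine --- the construction of $\Pi'$ by a descending induction on $\langle f, \beta \rangle$ --- is genuinely broken for infinite subsystems, which are exactly the case this paper needs. A decomposition $\beta = c_1\gamma_1 + c_2\gamma_2$ gives $c_i\langle f,\gamma_i\rangle < \langle f,\beta\rangle$, but since the $c_i$ may be less than $1$, the summands need not have smaller $f$-value: already in $\tA_1$ one has $\alpha_1 + \delta = \tfrac{1}{2}\,\alpha_1 + \tfrac{1}{2}(\alpha_1 + 2\delta)$, where the second summand has strictly larger $f$-value than the root being decomposed. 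Even when a value-decreasing decomposition exists, the positive reals are not well-ordered, so the recursion needs a separate finiteness or discreteness argument to terminate; and in infinite systems the cone $\RR_{\geq 0}\Phi'_+$ need not be generated by roots on its extreme rays (extreme rays can be limit rays containing no root), so the conclusion that indecomposables conically span is a theorem of Dyer, not an exercise. The paper's own Remark immediately after the proposition --- that $\Pi'$ can be linearly dependent, with $|\Pi'| > \dim \RR\Phi'$ --- is a warning that exactly this finite-dimensional cone intuition fails. Your minimality argument has a smaller but real flaw too: you define indecomposability only against two-term decompositions, while a competing spanning set exhibits $\beta$ as a positive combination of possibly many roots whose partial sums need not be roots, so ``any spanning set contains $\Pi'$'' does not follow as stated.

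Second, the step you correctly identify as the crux --- verifying the exchange condition so that $(W',S')$ is a Coxeter system --- is not proved but deferred (``invoke the reflection-subgroup theory of Deodhar and Dyer''). That is a citation, not an argument, and it is precisely the citation the paper itself makes for the whole proposition; Dyer's actual route moreover runs in the opposite direction, first defining the canonical generators by the cocycle condition $N(t)\cap\Phi' = \{\beta_t\}$ and then deriving the positivity property of $\Pi'$, rather than founding everything on cone-indecomposability. To your credit, the genuinely rank-$2$ parts of your proposal are sound: in a planar subsystem the positive roots lie in an open half-plane, the salient two-dimensional cone has exactly two extreme rays, reducedness gives one root on each, and interior roots are positive combinations of these two --- which correctly proves the final claim $|\Pi'| = 2$, and your paragraph identifying reflections of $W'$ with roots of $\Phi'_+$ is fine once the Coxeter structure is granted. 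But as a proof of the full proposition, the proposal both leans on the literature for the hard core and introduces gaps in the parts it argues directly.
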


The set $\Pi'$ is called a \newword{base} for $\Phi'$ and its elements are called the \newword{simple roots} of $\Phi'$. The cardinality of $\Pi'$ is called the \newword{rank} of $\Phi'$.
The group $W'$ is called the \newword{reflection subgroup} generated by $\Phi'$. 
We say a subsystem $\Phi'$ is \newword{full} if whenever $\alpha$ and $\beta\in \Phi'$, it follows that any $\gamma$ in the span $\RR\alpha+\RR\beta$ is also in $\Phi'$. The \newword{type} of $\Phi'$ is a description of its Cartan matrix. If $X_n$ is a Cartan type, then a full subsystem of $\Phi$ with type $X_n$ is called an \newword{$X_n$-subsystem}. (E.g., an $A_2$-subsystem consists of roots $\alpha,\beta,\alpha+\beta$ and their negations, such that no other elements of $\Phi$ are in their span.) 

\begin{remark}
	In general, if $\Phi'$ is a subsystem of $\Phi$, then the elements of $\Pi'$ may be linearly dependent. In other words, the rank of $\Phi'$ might be larger than $\dim \RR\Phi'$. If $\Phi'$ is finite or the span of $\Phi'$ is 2-dimensional, then this doesn't happen. 
	In general, there even exist infinite rank subsystems of finite rank root systems; however, all subsystems of the finite rank root systems considered here are finite rank \cite{Dyer2011b}.
\end{remark}

A root system is \newword{indecomposable} when it cannot be written as the disjoint union of two nonempty full subsystems. A Coxeter group with an indecomposable root system is called \newword{irreducible}. Every root system can be written uniquely as a disjoint union of indecomposable subsystems, and each Coxeter group can be written uniquely as a product of irreducible factors.


\subsection{Finite and affine Coxeter groups} \label{FiniteAffineGroups}
Let $(A_{ij})$ be a crystallographic Cartan matrix with Coxeter group $W_0$ such that the associated root system $\Phi_0$ is finite and indecomposable. 
Such matrices are classified by \newword{Dynkin diagrams} (see \Cref{fig:finite}); we will say that $\Phi_0$ or $W_0$ is of \newword{type $X_n$} where $n$ is the rank of $\Phi_0$ and the symbol $X_n$ is one of $\{ A_n, B_n, C_n, D_n, E_6, E_7, E_8, F_4, G_2 \}$.
There is a unique root $\theta\in \Phi_0^+$ such that $\theta-\alpha$ is in the positive span of $\Phi_0^+$ for any $\alpha\in \Phi_0^+$. The root $\theta$ is called the \newword{highest root} of $\Phi_0$.


\begin{figure}
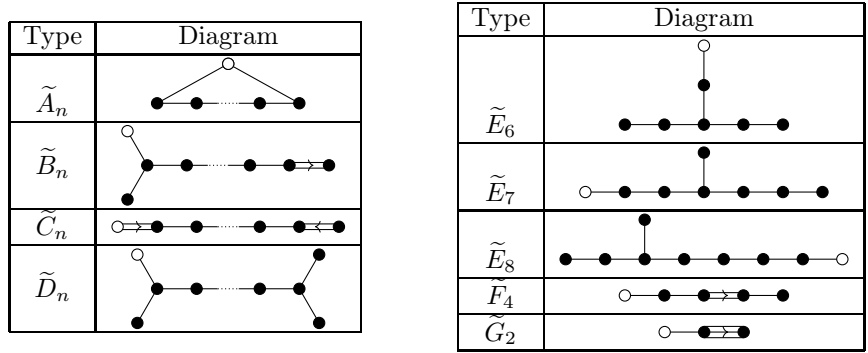

	\centering
	\begin{tabular}{|c|c|}
		\hline
		Type & Diagram \\
		\hline
		$\tA_n$ & \dynkin[scale=1.5,extended]{A}{} \\ 
		\hline
		$\tB_n$ & \dynkin[scale=1.5,extended]{B}{} \\
		\hline
		$\tC_n$ & \dynkin[scale=1.5,extended]{C}{} \\
		\hline
		$\tD_n$ & \dynkin[scale=1.5,extended]{D}{} \\
		\hline
	\end{tabular}
	\hspace{3em}
	\begin{tabular}{|c|c|}
		\hline
		Type & Diagram \\
		\hline
		$\widetilde E_6$ & \dynkin[scale=1.5,extended]
		{E}{6} \\
		\hline
		$\widetilde E_7$ & \dynkin[scale=1.5,extended]{E}{7} \\
		\hline
		$\widetilde E_8$ & \dynkin[scale=1.5,extended]{E}{8} \\
		\hline
		$\widetilde F_4$ & \dynkin[scale=1.5,extended]{F}{4} \\
		\hline
		$\widetilde G_2$ & \dynkin[scale=1.5,gonality=6,extended]{G}{2} \\
		\hline
	\end{tabular}
	\caption{The Dynkin diagrams associated to affine indecomposable root systems. The extra, unfilled, node corresponds to the extra simple root $\alpha_0$. The remaining filled nodes form the Dynkin diagram for the associated finite crystallographic root system.}\label{fig:finite}
\end{figure}

From this data, we construct a new Coxeter group $W$, called the \newword{affine Coxeter group} of type $\tX_n$. 
The affine symmetric group $\tS_n$ from the introduction is the affine Coxeter group of type $\tA_{n-1}$. 
See~\cite{Bjorner2005} or \cite{Kac1983} for more.

The affine Coxeter group $W$ has simple generators $s_0$, $s_1$, \dots, $s_n$ where, for $i$, $j \geq 1$, we have the same Coxeter matrix $m_{ij}$ as in $W_0$ and, for $j \geq 1$, we take $m_{0j} = m_{j0}$ to be the order of $t_{\theta} s_j$ in $W_0$.
Thus, there is a map $W \to W_0$ sending $s_i$ to $s_i$ and $s_0 \to t_{\theta}$, as well as an inclusion $W_0 \into W$; making $W$ into a semidirect product $W_0 \ltimes P$. 
It turns out that $P$ is a free abelian group of rank $n$.

The corresponding root system can be described as follows:
Let $V_0$ be the ambient $n$-dimen\-sional vector space of $\Phi_0$ and let $V \coloneqq  V_0 \oplus \RR\delta$ be an $(n+1)$-dimensional vector space, with new basis vector $\delta$. 
Then $\Phi$ is the set
\[ \Phi = \{ \alpha+k\delta : \alpha\in\Phi_0,\ k\in\ZZ \}. \]
The simple roots of $\Phi$ are $\alpha_1$, \ldots, $\alpha_n$, $\alpha_0$, where $\alpha_1$, \dots, $\alpha_n$ are the simple roots of $\Phi_0$ and $\alpha_0=\delta-\theta$. The affine Dynkin diagrams of the possible resulting root systems are shown in \Cref{fig:finite}.
We write $\pi$ for the map $\Phi \to \Phi_0$ sending $\beta+k \delta$ to $\beta$, for $\beta \in \Phi_0$.

For each pair $\pm \beta$ of roots in $\Phi_0$, there is a type $\tA_1$ root subsystem of $\Phi$, whose elements are $\{ \pm \beta + k \delta : k \in \ZZ \}$. 
Each root of $\Phi$ lies in exactly one such $\tA_1$ subsystem. 

We now recall the definition of the \newword{Coxeter fan} of a finite Coxeter group $W_0$. This is a complete fan living in the dual representation $V^{\ast}_0$ of $W_0$. The set $F_0$ of $f\in V^{\ast}_0$ such that $\langle f,\alpha\rangle\geq 0$ for all $\alpha\in \Phi_0^+$ is called the \newword{dominant chamber}. A \newword{chamber} is any cone of the form $wF_0$ for $w\in W_0$. The Coxeter fan consists of all cones formed by intersections of chambers. These cones are called the \newword{faces} of the fan. Let $\faces(W_0)$ denote the set of faces of the Coxeter fan of $W_0$. The action of $W_0$ on $V^{\ast}_0$ permutes the elements of $\faces(W_0)$. 

When we defined $W$, we stated that there was a map $W \to W_0$, so $W$ acts on $\faces(W_0)$ through this map. 
We now describe the action of $W$ on $\faces(W_0)$ more geometrically: we embed $V_0^\ast$ as a codimension 1 subspace of $V^\ast$ by identifying the co-roots $\alpha_1^\vee$, \dots, $\alpha_n^\vee$ in $V_0^\ast$ with the corresponding co-roots in $V^\ast$. This identification is $W$-equivariant and identifies $V_0^\ast$ with the subspace $\delta^\perp \subset V^\ast$. So we think of the $W_0$--Coxeter fan as living in a hyperplane in $V^*$, and this perspective is compatible with the $W$-action.

Given $F\in \faces(W_0)$, we write $\Phi_0^F$ for set of roots in $\Phi_0$ which are orthogonal to $F$, and we write $\Phi_F$ for the set of roots $\beta \in \Phi$ with $\pi(\beta) \in \Phi_0^F$. 
Let $W_F$ be the reflection group generated by the reflections over the roots $\beta$ in $\Phi_F$; we call $W_F$ a \newword{face subgroup} of $W$.
If $F$ and $F'$ are faces of the $W_0$--Coxeter fan with the same linear span, then $W_F=W_{F'}$. Hence, face subgroups are indexed by spans of faces of the $W_0$--Coxeter fan or, equivalently, parabolic subgroups of $W_0$.

\subsection{Closed and biclosed sets} \label{ClosedBiclosedSection}
A subset $J$ of $\Phi^+$ is called \newword{closed} if, for any three roots $\alpha$, $\beta$ and $\gamma$ with $\gamma \in \RR_{>0} \alpha + \RR_{>0} \beta$, if $\alpha$ and $\beta \in J$ then $\gamma \in J$.
A subset $J$ of $\Phi^+$ is called \newword{coclosed} if $\Phi^+ \setminus J$ is closed. 
The set $J$ is \newword{biclosed} if it is both closed and coclosed.
As in the introduction, for a subset $A$ of $\Phi^+$, we define the \newword{closure} $\overline{A}$ to be the intersection of all closed sets containing $A$ and we define the \newword{interior} $A^{\circ}$ to be the union of all coclosed sets contained in $A$. 

If $J$ is a subset of $\Phi^+$ and $\Phi'$ is a subsystem of $\Phi$, then we say $J$ is \newword{finite} in $\Phi'$ if $|J\cap \Phi'|$ is finite. We say $J$ is \newword{cofinite} in $\Phi'$ if $|\Phi'_+\setminus J|$ is finite.

To give some intuition for what biclosed sets look like, we will introduce a notation for describing rank 2 systems inside $\Phi^+$.
Each rank two subsystem can be linearly ordered such that,  if $\gamma \in \RR_{>0} \alpha + \RR_{>0}\beta$, then $\gamma$ appears in between $\alpha$ and $\beta$ in the order. 
The linear order is characterized by this property up to reversal.
If $\gamma \in \RR_{>0} \alpha + \RR_{>0}\beta$, then we say that $\gamma$ is \newword{between} $\alpha$ and $\beta$, and we abbreviate this to $\alpha \to \gamma \to \beta$; note that the order is only defined up to reversal, so we could equally well write $\beta \to \gamma \to \alpha$.

Closed sets in $\Phi^+$ may now be characterized as those sets $B$ such that for each full rank 2 subsystem $\Phi'$ and for any ordering on $\Phi'_+$ as above, the intersection $B\cap \Phi'$ is an interval in the order. Similarly, biclosed sets in $\Phi^+$ are exactly those sets such that $B\cap \Phi'$ is either a down-set in the order, or an up-set in the order, for all full rank 2 subsystems $\Phi'$. 

\begin{remark}
	There are global total orders on $\Phi^+$ which restrict to an order as above on each rank two subsystem.
	Such a total order is called a \newword{reflection order}; we do not make use of the theory of reflection orders. The interested reader can refer to \cite{Bjorner2005} or \cite{Dyer2019}.
\end{remark}

\begin{eg}
	Consider \Cref{fig:reflorder}, a diagram of the $\tA_1$ root system. The set $\{\alpha_0+n\delta : n\geq 0\}$, shaded blue in the figure, is a biclosed set since it is a down-set in the ordering shown below the figure. Other biclosed sets include $\{\alpha_1, \alpha_1+\delta\}$ and $\Phi^+\setminus \{\alpha_0\}$, which are both up-sets in the order. The former of these is finite (since it has 2 elements) and the latter is cofinite (since its complement has 1 element). The biclosed set shown in the figure is neither finite nor cofinite.
\end{eg}


\begin{figure}
	\centering
	\begin{tikzpicture}
		\begin{scope}[shift={(0,-2)}]
			\clip (-2.5,0) rectangle (2.5,4);
			\foreach \x in {0,1,...,20} {
				\draw[-stealth] (0,0) -- ({atan2(\x,\x+1)+45}:{sqrt(\x^2+(\x+1)^2)});
				\draw[blue,-stealth] (0,0) -- ({atan2(\x+1,\x)+45}:{sqrt(\x^2+(\x+1)^2)});
			};
			\fill[blue] (0,0) -- ({atan2(20+1,20)+45}:{sqrt(20^2+(20+1)^2)}) -- (90:4);
			\fill[black] (0,0) -- ({atan2(20,20+1)+45}:{sqrt(20^2+(20+1)^2)}) -- (90:4);
			\draw ({atan2(0,1)+45}:{sqrt((1-1)^2+(1)^2)}) node[right] at ++(0,0) {$\alpha_1$};
			\draw ({atan2(1,1-1)+45}:{sqrt((1-1)^2+(1)^2)}) node[left] at ++(0,0) {$\alpha_0$};
			\draw ({atan2(2-1,2)+45}:{sqrt((2-1)^2+(2)^2)}) node[right] at ++(0,0) {$\alpha_{1}+\delta$};
			\draw ({atan2(2,2-1)+45}:{sqrt((2-1)^2+(2)^2)}) node[left] at ++(0,0) {$\alpha_{0}+\delta$};
			\draw ({atan2(3-1,3)+45}:{sqrt((3-1)^2+(3)^2)}) node[right] at ++(0,0) {$\alpha_{1}+2\delta$};
			\draw ({atan2(3,3-1)+45}:{sqrt((3-1)^2+(3)^2)}) node[left] at ++(0,0) {$\alpha_{0}+2\delta$};
		\end{scope}
		\draw node at ({sqrt(2)/2+.3-.05},2.1) {$\vdots$};
		\draw node at ({-sqrt(2)/2-.3},2.1) {$\vdots$};
	\end{tikzpicture}
	\[\alpha_0\to \alpha_0+\delta \to \alpha_0+2\delta \to \cdots \to \alpha_1+2\delta \to \alpha_1+\delta \to \alpha_1\]
	\caption{The positive roots in the $\tA_1$ root system. 
		A possible reflection order is shown below. 
		An example of a biclosed set is shaded in blue.
	} \label{fig:reflorder}
\end{figure}
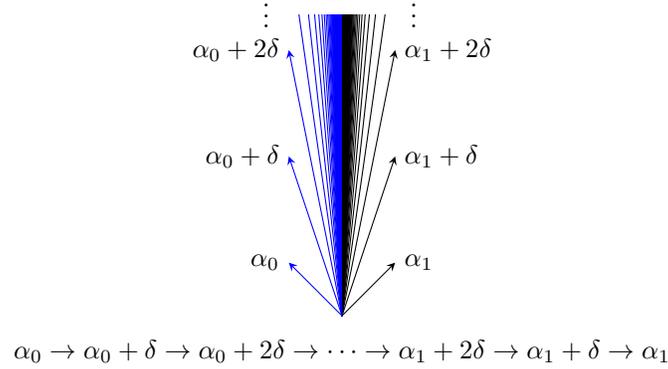

Recall that the set of inversions of a Coxeter group element is a finite set which can be described as $N(w) = \{ \beta \in \Phi^+ : \langle wf, \beta \rangle < 0 \}$, where $f \in V^{\ast}$ obeys $\langle f, \alpha_i \rangle >0$ for all $i$. 
From this geometric description, it is clear that $N(w)$ is biclosed for any $w \in W$. This characterizes the finite biclosed sets:

\begin{proposition}[\cite{Dyer2019}]\label{FiniteBiclosed}
	A set $B$ of positive roots is finite and biclosed if and only if there exists $w\in W$ such that $B = N(w)$. In this case, $w$ is uniquely defined.
\end{proposition}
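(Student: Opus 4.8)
The plan is to prove both directions of the equivalence, establishing first that every inversion set $N(w)$ is finite and biclosed, and then that every finite biclosed set arises this way, with uniqueness coming from the faithfulness of the weak order.

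For the forward direction, finiteness of $N(w)$ is standard: $\ell(w)$ equals $|N(w)|$, so the inversion set of any $w \in W$ has exactly $\ell(w)$ elements. Biclosedness follows from the geometric description $N(w) = \{\beta \in \Phi^+ : \langle wf, \beta\rangle < 0\}$ where $\langle f, \alpha_i\rangle > 0$ for all $i$. Indeed, suppose $\gamma \in \RR_{>0}\alpha + \RR_{>0}\beta$, say $\gamma = c\alpha + d\beta$ with $c, d > 0$. Then $\langle wf, \gamma\rangle = c\langle wf, \alpha\rangle + d\langle wf, \beta\rangle$. If $\alpha, \beta \in N(w)$ then both inner products are negative, forcing $\langle wf, \gamma\rangle < 0$, so $\gamma \in N(w)$; this gives closedness. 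If $\alpha, \beta \notin N(w)$, then (using that $\gamma$ is a positive root and $wf$ is generic so the relevant inner products are nonzero for roots) both inner products are positive, so $\langle wf, \gamma\rangle > 0$ and $\gamma \notin N(w)$, giving coclosedness.

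The harder direction is that every finite biclosed set $B$ is of the form $N(w)$. The natural approach is induction on $|B|$. If $B = \emptyset$, take $w = e$. Otherwise, the key claim is that a nonempty finite biclosed set must contain a simple root $\alpha_i$. To see this, I would argue that if $B$ contained no simple root, one could derive a contradiction: taking any $\beta \in B$ and writing it in terms of simples, the coclosedness condition propagates ``not in $B$'' upward through rank-two subsystems in a way incompatible with $\beta \in B$ being a positive combination of roots not in $B$. Concretely, in each full rank-two subsystem $B$ restricts to a down-set or up-set in the reflection order described in the excerpt; one uses this to show $B$ must meet the simple roots. Granting a simple root $\alpha_i \in B$, set $B' = s_i(B \setminus \{\alpha_i\})$. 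One checks that $s_i$ maps $\Phi^+ \setminus \{\alpha_i\}$ bijectively to itself and preserves the betweenness relation $\alpha \to \gamma \to \beta$ (since $s_i$ is linear), so $B'$ is again biclosed, now with $|B'| = |B| - 1$. By induction $B' = N(w')$ for some $w'$, and then one verifies $B = N(s_i w')$ using the standard recursion $N(s_i w') = \{\alpha_i\} \sqcup s_i N(w')$ valid when $\ell(s_i w') = \ell(w') + 1$.

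The main obstacle is establishing that a nonempty finite biclosed set contains a simple root, together with the verification that the operation $B \mapsto s_i(B \setminus \{\alpha_i\})$ genuinely produces a biclosed set; these two facts are where the combinatorial content lies, whereas the rest of the induction and the uniqueness statement (immediate from the fact that $w \mapsto N(w)$ is injective, which is part of the definition of weak order being a partial order) are essentially bookkeeping. An alternative that sidesteps the ``contains a simple root'' lemma is to pick a reflection order refining the rank-two orders and take $\beta$ to be a minimal element of $B$ in that order, arguing $\beta$ must be simple; but since the excerpt deliberately avoids developing reflection orders, I would favor the direct rank-two argument above. This result is attributed to Dyer, so I would expect the paper to cite the proof rather than reproduce it in full.
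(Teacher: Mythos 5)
The paper offers no proof of this proposition at all: it is quoted from \cite{Dyer2019}, exactly as you predicted in your final sentence, so there is no internal argument to compare against and your proposal must be judged on its own merits. Its skeleton is the standard induction and is sound. In the forward direction, the ``genericity'' you invoke is automatic rather than an extra assumption: $\langle wf,\beta\rangle = \langle f, w^{-1}\beta\rangle = \pm\sum_i c_i\langle f,\alpha_i\rangle$ with all $c_i\geq 0$ and not all zero, so the pairing is nonzero for every root. Uniqueness is indeed injectivity of $w\mapsto N(w)$, and the recursion $N(s_iw') = \{\alpha_i\}\sqcup s_iN(w')$ does apply in your induction, because $\alpha_i\in B' = s_i(B\setminus\{\alpha_i\})$ would require $-\alpha_i\in B$, which is impossible; hence $\alpha_i\notin N(w')$ and $\ell(s_iw')=\ell(w')+1$.

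The two steps you flag as the crux, however, need more than you give. For the simple-root lemma, the ``propagation'' sketch can be replaced by a short minimal-element argument: pick $\beta\in B$ of minimal depth (height, in the crystallographic case). If $\beta$ is not simple, there is a simple root $\alpha_i$ with $\langle\beta,\alpha_i^\vee\rangle>0$, so $s_i\beta\in\Phi^+$ has smaller depth and $\beta\in\RR_{>0}\alpha_i+\RR_{>0}s_i\beta$; coclosedness of $B$ forces $\alpha_i\in B$ or $s_i\beta\in B$, and the latter contradicts minimality. For the biclosedness of $B'=s_i(B\setminus\{\alpha_i\})$, your stated justification (that $s_i$ is linear and preserves betweenness) is genuinely insufficient: it handles only rank-two subsystems avoiding $\alpha_i$, and it cannot suffice as stated, since $\alpha_i\in B$ but $\alpha_i\notin B'$, so $B'$ is not just ``$s_i$ applied to $B$ inside $\Phi^+$.'' The missing case is $\gamma\in\RR_{>0}\alpha_i+\RR_{>0}\beta$ with $\beta,\gamma\neq\alpha_i$: here $\alpha_i\notin B'$ makes the closure condition vacuous, and for coclosure, if $\beta\notin B'$, i.e.\ $s_i\beta\notin B$, then from $s_i\beta\in\RR_{>0}\alpha_i+\RR_{>0}s_i\gamma$ together with $\alpha_i\in B$, closedness of $B$ shows that $s_i\gamma\in B$ would force $s_i\beta\in B$, a contradiction; hence $\gamma\notin B'$. (A relation expressing $\alpha_i$ itself as a positive combination of other positive roots is impossible by simplicity, so that degenerate case does not arise.) Alternatively, this verification comes for free from the paper's own machinery: $B'=s_i\cdot B$ under the $W$-action of \Cref{subsec:CommensurabilityClasses}, and since $D(s_i\cdot B)=s_iD(B)$, the no-positive-relation criterion of \Cref{lem:DoubleCondition} is visibly preserved by the linear automorphism $s_i$. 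With these insertions your proof is complete.
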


There is another way to describe biclosed sets, using subsets of $\Phi$ rather than $\Phi^+$: Let $B$ be a subset of $\Phi^+$ and let $D(B) = B \cup (- (\Phi^+ \setminus B ))$. So, for each $\beta \in \Phi^+$, we put $\beta$ into $D(B)$ if $\beta \in B$ and we put $- \beta$ into $D(B)$ if $\beta \not\in B$.
\begin{lemma} \label{lem:DoubleCondition}
	The set $B$ is biclosed if and only if there do not exist $\alpha$, $\beta$, $\gamma \in D(B)$ and positive real scalars $a$, $b$, $c$ such that $a \alpha + b \beta + c \gamma = 0$.
\end{lemma}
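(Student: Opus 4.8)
The plan is to prove both directions by contraposition, after recording one bookkeeping observation: for each positive root $\rho$, the set $D(B)$ contains exactly one of $\rho$ and $-\rho$, and a root lying in $D(B)$ is positive precisely when its positive representative lies in $B$. Throughout I will write $|\rho|$ for the positive root among $\{\rho,-\rho\}$, so that for $\rho \in D(B)$ we have $|\rho| \in B$ if and only if $\rho$ is a positive root.

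First I would show that if $B$ is \emph{not} biclosed, then a positive dependence exists among three elements of $D(B)$. If $B$ fails to be closed, there are positive roots $\alpha,\beta,\gamma$ with $\gamma \in \RR_{>0}\alpha + \RR_{>0}\beta$, with $\alpha,\beta \in B$ but $\gamma \notin B$. Then $\alpha,\beta,-\gamma \in D(B)$, and writing $\gamma = p\alpha + q\beta$ the relation $p\alpha + q\beta + 1\cdot(-\gamma) = 0$ is a positive dependence. If instead $B$ fails to be coclosed, there are positive roots with $\gamma \in \RR_{>0}\alpha + \RR_{>0}\beta$, with $\alpha,\beta \notin B$ but $\gamma \in B$; then $-\alpha,-\beta,\gamma \in D(B)$ and $1\cdot\gamma + p(-\alpha) + q(-\beta) = 0$ does the job.

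For the converse, suppose $a\alpha + b\beta + c\gamma = 0$ with $a,b,c > 0$ and $\alpha,\beta,\gamma \in D(B)$. I would first record two elementary facts. No two of the three roots are proportional: in a reduced root system proportional roots are equal or antipodal, and $D(B)$ meets each antipodal pair exactly once, so either possibility puts both of an antipodal pair in $D(B)$. Moreover the three roots are not all positive and not all negative, since a strictly positive combination of positive roots has all simple-root coordinates nonnegative and not all zero, hence cannot vanish (and dually for negative roots). Therefore exactly one or exactly two of $\alpha,\beta,\gamma$ are negative. If exactly one is negative, say $\gamma$, then solving for $-\gamma$ gives $|\gamma| \in \RR_{>0}|\alpha| + \RR_{>0}|\beta|$ with $|\alpha|,|\beta| \in B$ but $|\gamma| \notin B$, contradicting closedness. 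If exactly two are negative, say $\beta$ and $\gamma$, then solving for the unique positive root gives $|\alpha| \in \RR_{>0}|\beta| + \RR_{>0}|\gamma|$ with $|\beta|,|\gamma| \notin B$ but $|\alpha| \in B$, contradicting coclosedness.

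The whole argument is essentially sign bookkeeping, so no step is genuinely hard; the only place I expect to need care is the converse, where one must correctly argue that a positive dependence among three pairwise non-proportional roots forces exactly one ``minority sign,'' and then match the two sign-count cases to the failure of closedness versus coclosedness. To avoid slipping there, I would isolate the ``not all the same sign'' statement as a separate claim before branching into the two cases.
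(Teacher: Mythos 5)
Your proof is correct and takes essentially the same route as the paper's: the forward direction produces the positive dependence from a failure of closedness or coclosedness exactly as the paper does, and your converse is the paper's sign analysis (the three roots cannot all share a sign; the one-negative case contradicts closedness, the two-negative case contradicts coclosedness). The only differences are cosmetic --- you spell out the ``not all the same sign'' step, which the paper asserts without justification, while your non-proportionality observation is harmless but never actually used, since the sign trichotomy and both case analyses go through without it.
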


\begin{proof}
	If $B$ is not closed, then there are $\alpha$, $\beta \in B$ and $\gamma \not \in B$ with $\gamma \in \RR_{>0} \alpha + \RR_{>0} \beta$. Then $\alpha$, $\beta$ and $- \gamma \in D(B)$ and there is a linear relation between $\alpha$, $\beta$ and $- \gamma$ as above.
	
	If $B$ is not coclosed, then there are $\alpha$, $\beta \not\in B$ and $\gamma  \in B$ with $\gamma \in \RR_{>0} \alpha + \RR_{>0} \beta$. Then $-\alpha$, $-\beta$ and $\gamma \in D(B)$ and there is a linear relation between $-\alpha$, $-\beta$ and $\gamma$ as above.
	
	Conversely, suppose there are $\alpha$, $\beta$ and $\gamma \in D(B)$ as above. Then $\alpha$, $\beta$ and $\gamma$ cannot all be positive roots or all negative roots; rename them so that either $\alpha$ and $\beta \in \Phi^+$ and $\gamma \in (- \Phi^+)$, or else $\alpha$ and $\beta \in (- \Phi^+)$ and $\gamma \in \Phi^+$. 
	In the first case, $\alpha$ and $\beta \in B$ and $(- \gamma) \not\in B$, showing that $B$ is not closed; 
	in the second case, $(-\alpha)$ and $(-\beta) \not\in B$ and $\gamma \in B$, showing that $B$ is not coclosed.
\end{proof}

We conclude this section by connecting the general  notion of biclosure  to the biclosed subsets from \Cref{sec:NiceIntro}.
Let $e_1$, $e_2$, \ldots, $e_n$ be the standard basis of $\RR^n$, then $\{ e_i-e_j : 1 \leq i < j \leq n \}$ form a root system of type $A_{n-1}$, which spans an $(n-1)$-dimensional subspace of $\RR^n$.
The rank 2 subsystems of $\Phi^+$ come in two forms: The type $A_2$ subsystems are 
\[ (e_i-e_j) \to (e_i-e_k) \to (e_j-e_k) \] 
for $i<j<k$, and the type $A_1 \times A_1$ subsystems are 
\[ (e_i - e_j) \to (e_k - e_{\ell}) \] 
for $i < j$ and $k < \ell$ distinct.
The closure conditions  for the $A_2$-subsystems are the conditions from \Cref{sec:NiceIntro}; the closure conditions for the $A_1 \times A_1$-subsystems are trivial, since they only have two positive roots. 

Similarly, define $\widehat{V}$ to be the $n+1$-dimensional vector space spanned by a collection of vectors $\te_i$, for $i \in \ZZ$, modulo the relation $\te_{i+n} - \te_i = \te_{j+n} - \te_j$ for all $i$, $j$. Then the vectors $\{ \te_i - \te_j : i<j,\ i \not \equiv j \bmod n \}$ form a copy of the positive $\widetilde{A}_{n-1}$-system, inside an $n$-dimensional subspace of $\widehat{V}$. There are three types of full rank 2 subsystems: For $i<j<k$ all distinct modulo $n$, we have an $A_2$-subsystem 
\[ (\te_i-\te_j) \to (\te_i-\te_k) \to (\te_j-\te_k). \] 
For $i < j < i+n$ we have an $\tA_1$-subsystem on the roots $\{ \te_{i}- \te_{j+pn} :  p\geq 0 \}\cup \{ \te_{j} - \te_{i+pn} : p \geq 1 \}$ with ordering
\[ (\te_i - \te_j) \to (\te_i - \te_{j+n}) \to (\te_i-\te_{j+2n}) \to \cdots \to (\te_{j}-\te_{i+3n}) \to (\te_{j}-\te_{i+2n}) \to (\te_j-\te_{i+n}) .\] 
For $i < j$, $k < \ell$ distinct modulo $n$, we have an $A_1 \times A_1$-subsystem 
\[ (\te_i - \te_j) \to (\te_k - \te_{\ell}) .\] 
The closure conditions for  the first two types of subsystems are the conditions from \Cref{sec:NiceIntro}; the closure conditions  for  the 
$A_1 \times A_1$-subsystems are trivial.


\section{A combinatorial model for biclosed sets in affine type}\label{sec:AffineModels}

We now give a description of the biclosed sets for affine Coxeter groups, generalizing \Cref{AtildeModel}. 
Many of these results were proved independently by Matthew Dyer in unpublished work~ \cite{Dyerpreprint}. 

Fix an affine root system $\Phi$ with associated finite subsystem $\Phi_0$ for the duration of the section. When we say ``biclosed set'' here, we mean a biclosed set in $\Phi^+$.

\subsection{The commensurability relation}
\begin{lemma}\label{lem:sufficientlylarge}
	Let $B$ be a biclosed subset in $\Phi^+$ and let $\beta$ be a root in $\Phi_0$. Then either $\beta + k \delta \in B$ for all sufficiently large $k$, or else  $\beta + k \delta \not\in B$ for all sufficiently large $k$.
\end{lemma}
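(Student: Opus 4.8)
The plan is to reduce the statement to the order-theoretic analysis of a single rank-two subsystem. The roots $\beta + k\delta$, as $k$ ranges over $\ZZ$, together with the roots $-\beta + k\delta$, are exactly the elements of the $\tA_1$-subsystem $\Psi$ attached to the pair $\pm\beta$, as recalled in \Cref{FiniteAffineGroups}. This subsystem spans the two-dimensional space $\RR\beta + \RR\delta$, and since $\Phi_0$ is reduced the only roots of $\Phi$ lying in this span are the $\pm\beta + k\delta$; hence $\Psi$ is a full rank-two subsystem. For $k \geq 1$ every root $\beta + k\delta$ is positive, so for all sufficiently large $k$ the roots in question lie in $\Phi^+ \cap \Psi$ and their membership in $B$ is governed by the behavior of $B$ on this single subsystem.

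Next I would invoke the characterization of biclosed sets on rank-two subsystems from \Cref{ClosedBiclosedSection}: for every full rank-two subsystem $\Phi'$, the intersection $B \cap \Phi'_+$ is either a down-set or an up-set in the rank-two order. Applying this to $\Psi$, the positive roots are $\{\beta + k\delta : k \geq 0\} \cup \{(\delta-\beta) + k\delta : k \geq 0\}$ (taking $\beta$ positive; the negative case is symmetric, with the two rays exchanged), and the betweenness order on them is
\[ \beta \to \beta + \delta \to \beta + 2\delta \to \cdots \to (\delta-\beta) + 2\delta \to (\delta-\beta) + \delta \to (\delta-\beta), \]
an order of type $\omega + \omega^*$ in which the roots $\beta + k\delta$ form the increasing $\omega$-tail.

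Finally I would observe that a down-set of an order of type $\omega + \omega^*$ is eventually constant on each of its two tails: if such a down-set meets the $(\delta-\beta)$-tail at all then it must contain the whole $\beta$-tail, and otherwise it is an initial segment of the $\beta$-tail, hence either all of it or a finite piece. In every case $B \cap \{\beta + k\delta : k \geq 0\}$ is finite or cofinite, which is precisely the claimed dichotomy; the up-set case follows by passing to complements. The only steps requiring care are the verification that $\Psi$ is genuinely full (so that the rank-two characterization applies) and the sign bookkeeping for $\beta$, but there is no real obstacle here, since the entire content of the lemma is captured by the order picture of a single $\tA_1$-subsystem.
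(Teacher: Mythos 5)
Your proof is correct and follows essentially the same route as the paper: both restrict $B$ to the full $\tA_1$-subsystem $\Phi^+ \cap (\RR\beta + \RR\delta)$, observe that the roots $\beta + k\delta$ form an increasing tail of its linear (betweenness) order, and conclude from the up-set/down-set characterization of biclosed sets on rank-two subsystems that membership is eventually constant. Your explicit verification that the subsystem is full (via reducedness of $\Phi_0$) and the $\omega + \omega^*$ order-type analysis are just slightly more detailed renderings of steps the paper leaves implicit.
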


\begin{proof}
	The intersection of $\Phi^+$ with $\RR \beta + \RR \delta$ is 
	\[ \beta \to \beta+\delta \to \beta+2 \delta \to \cdots \to 2 \delta - \beta \to \delta - \beta \]
	if $\beta$ is a positive root, and is
	\[ \beta+\delta \to \beta+2 \delta \to \cdots \to 2 \delta - \beta \to \delta - \beta \to (-\beta) \]
	if $\beta$ is a negative root. 
	Either way, the roots $\beta+k \delta$ occur in linear order as $k$ grows, so the result follows.
\end{proof}

This lemma allows us to make the following definition:
For a biclosed subset $B$ of $\Phi^+$, we define 
\[ B_{\infty} = \{ \beta \in \Phi_0 : \beta+k \delta \in B \ \text{for}\ k \gg 0 \}. \]
We will classify biclosed subsets of $\Phi$ by first classifying the possible sets $B_{\infty}$ and then classifying the biclosed sets $B$ corresponding to each $B_{\infty}$.


For two sets $B$ and $C$, we define $B \oplus C$ to be the symmetric difference $(B \setminus C) \sqcup (C \setminus B)$.
We say that $B$ and $C$ are \newword{commensurable} if $B \oplus C$ is finite.
Being commensurable is clearly an equivalence relation, and we will refer to an equivalence class of biclosed sets for this equivalence relation as a \newword{commensurability class}.

\begin{lemma} \label{lem:CommensurableInfinity}
	Let $B$ and $C$ be two biclosed sets in $\Phi^+$. Then $B$ and $C$ are commensurable if and only if $B_\infty=C_\infty$.
\end{lemma}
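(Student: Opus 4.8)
The plan is to prove the two implications separately; the forward direction is immediate, and all the content sits in the reverse direction, which I would reduce to a finite, per-subsystem computation.

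First I would dispatch the implication ``$B \oplus C$ finite $\Rightarrow B_\infty = C_\infty$''. Fix $\beta \in \Phi_0$. Since $B \oplus C$ contains only finitely many roots, only finitely many of the roots $\beta + k\delta$ can lie in it, so $\beta + k\delta \in B \iff \beta + k\delta \in C$ for all $k \gg 0$. By the definition of $B_\infty$ and $C_\infty$ this gives $\beta \in B_\infty \iff \beta \in C_\infty$, and as $\beta$ was arbitrary, $B_\infty = C_\infty$.

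For the reverse direction, assume $B_\infty = C_\infty$. I would use the decomposition of $\Phi^+$ into the $\tA_1$ subsystems $S_\beta = \{\pm \beta + k\delta : k \in \ZZ\}$ indexed by the positive roots $\beta$ of $\Phi_0$: these are full rank-two subsystems, they partition $\Phi^+$, and there are only finitely many of them. Consequently $B \oplus C = \bigsqcup_\beta \bigl((B \cap S_\beta) \oplus (C \cap S_\beta)\bigr)$, so it suffices to show each summand is finite. By the characterization of biclosed sets in terms of rank-two subsystems, $B \cap S_\beta$ is either a down-set or an up-set for the linear order on the positive roots of $S_\beta$ computed in the lemma preceding this one, which (writing $a_k = \beta + k\delta$ for $k \geq 0$ and $b_k = -\beta + k\delta$ for $k \geq 1$) reads $a_0 \to a_1 \to a_2 \to \cdots \to b_2 \to b_1$ and therefore has order type $\omega + \omega^*$.

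The heart of the proof is then a finite case analysis of the down-sets and up-sets of this order against whether $\beta$ and $-\beta$ lie in $B_\infty$. I expect to establish a trichotomy: if neither $\beta$ nor $-\beta$ lies in $B_\infty$, then $B \cap S_\beta$ is finite; if both lie in $B_\infty$, then it is cofinite in $S_\beta$; and if exactly one of $\pm\beta$ lies in $B_\infty$, then $B \cap S_\beta$ is rigidly forced to be a single set, namely the full ray $\{a_k : k \geq 0\}$ or $\{b_k : k \geq 1\}$. Granting this, when $B_\infty = C_\infty$ the restrictions $B \cap S_\beta$ and $C \cap S_\beta$ always fall into the same case: in the first both are finite, in the second both are cofinite, and in the third they are literally equal, so in every case their symmetric difference is finite. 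Summing over the finitely many $\beta$ finishes the proof.

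The only nontrivial point—and the step I would treat carefully—is the rigidity in the mixed case. It rests on the observation that a down-set containing $a_k$ for arbitrarily large $k$ must contain \emph{every} $a_j$ (since $a_j < a_k$ whenever $j<k$), together with the dual statement for up-sets and the $b_k$; combined with the fact that any element $a_k$ lies below every $b_j$, this rules out any finite deformation and pins the restriction down exactly. The remaining enumeration of the down-sets and up-sets of an $\omega + \omega^*$ order is routine bookkeeping, so once this rigidity is verified the rest is mechanical.
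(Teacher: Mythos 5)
Your proof is correct and takes essentially the same route as the paper: decompose $\Phi^+$ into its finitely many $\delta$-strings and observe that on each string a biclosed set is finite or cofinite according to membership of the corresponding root in $B_\infty$, so $B_\infty = C_\infty$ forces each piece of $B \oplus C$ to be finite. The one cosmetic difference is that the paper works ray-by-ray (with the pieces $\beta + \ZZ_{\geq 0}\delta$ for $\beta \in \Phi_0$) rather than per $\tA_1$ subsystem, which renders your mixed-case rigidity analysis unnecessary: there you only need that $B$ meets the ray $\{a_k\}$ cofinitely and the ray $\{b_j\}$ finitely (and likewise for $C$), which already bounds the symmetric difference without pinning the restriction down exactly.
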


\begin{proof}
	The positive root system $\Phi^+$ breaks up into finitely many disjoint pieces of the form $\beta + \ZZ_{\geq 0} \delta$, for $\beta \in \Phi^+_0$, and $\beta + \ZZ_{> 0} \delta$, for $-\beta \in \Phi^+_0$. So $B$ and $C$ are commensurable if and only if their intersections with $\beta + \ZZ_{\geq 0} \delta$ are commensurable.
	For each $\beta$, the intersection of $B$ with  $\beta + \ZZ_{\geq 0} \delta$ is either finite or cofinite in $\beta + \ZZ_{\geq 0} \delta$; the former if $\beta \not\in B_{\infty}$ and the latter if $\beta \in B_{\infty}$. 
	So $B_{\infty} = C_{\infty}$ if and only if $B \cap \left( \beta + \ZZ_{\geq 0} \delta \right)$ and $C \cap \left( \beta + \ZZ_{\geq 0} \delta \right)$ are either both finite or both cofinite for each $\beta \in \Phi_0$, which is the same as saying that $B$ and $C$ are commensurable.
\end{proof}

Let $C$ be a subset of $\Phi_0$. We say that $C$ is \newword{biclosed in $\Phi_0$} if, for any (positive or negative) roots $\alpha,\beta,\gamma \in \Phi_0$ such that $\gamma \in \RR_{>0} \alpha+ \RR_{>0} \beta$, then $\alpha$, $\beta\in C$ implies $\gamma\in C$, and $\alpha$, $\beta\not\in C$ implies $\gamma\not\in C$.

\begin{lemma} \label{lem:BInfinityClosed}
	Let $B$ be a biclosed subset in $\Phi^+$. Then $B_{\infty}$ is biclosed in $\Phi_0$.
\end{lemma}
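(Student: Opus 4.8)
The plan is to deduce the biclosedness of $B_\infty$ in $\Phi_0$ from the biclosedness of $B$ in $\Phi^+$ by \emph{lifting} a single betweenness relation in $\Phi_0$ to a whole family of betweenness relations in $\Phi$ along the $\delta$-direction. Fix roots $\alpha,\beta,\gamma\in\Phi_0$ with $\gamma\in\RR_{>0}\alpha+\RR_{>0}\beta$. If $\alpha$ and $\beta$ are linearly dependent then $\gamma$ lies in a single $A_1$ and there is nothing to check, so I may assume they are independent and write $\gamma=a\alpha+b\beta$ with uniquely determined $a,b>0$. The key observation is the identity
\[ \gamma+(am+bn)\delta = a(\alpha+m\delta)+b(\beta+n\delta), \]
valid for all $m,n$. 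Hence whenever $k=am+bn$ is an integer and $k,m,n$ are large enough that $\gamma+k\delta$, $\alpha+m\delta$ and $\beta+n\delta$ all lie in $\Phi^+$ (which happens as soon as their $\delta$-coefficients are positive), the root $\gamma+k\delta$ is between $\alpha+m\delta$ and $\beta+n\delta$.

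The next step is arithmetic. Since $\Phi$ is crystallographic, pairing $\gamma=a\alpha+b\beta$ against $\alpha^\vee$ and $\beta^\vee$ gives a $2\times 2$ integer system with coefficient matrix $\left(\begin{smallmatrix}2&\langle\beta,\alpha^\vee\rangle\\ \langle\alpha,\beta^\vee\rangle&2\end{smallmatrix}\right)$, whose determinant $4-\langle\alpha,\beta^\vee\rangle\langle\beta,\alpha^\vee\rangle$ equals $3$, $2$, $1$ in the nondegenerate rank two types $A_2$, $B_2$, $G_2$ respectively. Solving shows $a,b\in\QQ$, and a direct check on these finitely many rank two crystallographic subsystems shows that in every case $1\in a\ZZ+b\ZZ$ (after clearing the denominator $D\le 3$, the resulting numerators are coprime, so $a\ZZ+b\ZZ=\tfrac1D\ZZ\ni 1$). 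Consequently $a\ZZ+b\ZZ=\ZZ$, so $am+bn=k$ has an integer solution for \emph{every} integer $k$; and since $a,b>0$, for all sufficiently large $k$ there is a solution with $m\geq M$ and $n\geq N$ for any prescribed thresholds $M,N$ (the feasible region is a nonempty lattice interval once $k$ exceeds $aM+bN$ by enough).

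With these preparations the conclusion is immediate. Suppose first $\alpha,\beta\in B_\infty$, and choose $M,N$ so that $\alpha+m\delta\in B$ for $m\geq M$ and $\beta+n\delta\in B$ for $n\geq N$. For each sufficiently large $k$, pick integers $m\geq M$, $n\geq N$ with $am+bn=k$; then $\alpha+m\delta,\beta+n\delta\in B$ and $\gamma+k\delta$ is between them, so closedness of $B$ forces $\gamma+k\delta\in B$. As this holds for all large $k$, we obtain $\gamma\in B_\infty$. Symmetrically, if $\alpha,\beta\notin B_\infty$, then choosing $M,N$ so that $\alpha+m\delta,\beta+n\delta\notin B$ and invoking coclosedness of $B$ (i.e.\ applying the same betweenness relation to $\Phi^+\setminus B$) gives $\gamma+k\delta\notin B$ for all large $k$, so $\gamma\notin B_\infty$. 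These are exactly the two conditions for $B_\infty$ to be biclosed in $\Phi_0$.

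I expect the arithmetic step to be the only genuine obstacle. Everything hinges on solving $am+bn=k$ in integers for \emph{all} large $k$ (equivalently $1\in a\ZZ+b\ZZ$), since $\gamma\in B_\infty$ requires $\gamma+k\delta\in B$ for every large $k$, not merely for $k$ in an arithmetic progression; a weaker solvability statement would only place $\gamma+k\delta$ in $B$ along a sublattice of $k$'s and would not yield $\gamma\in B_\infty$. I would secure the needed fact by the explicit coefficient computation above, which is short precisely because only the four rank two crystallographic types can occur.
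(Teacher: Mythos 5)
Your proof is correct, and its core is the same as the paper's: lift the relation $\gamma = a\alpha + b\beta$ to $\gamma + (am+bn)\delta = a(\alpha+m\delta) + b(\beta+n\delta)$ and apply closedness (resp.\ coclosedness) of $B$ to rank two subsystems of $\Phi^+$. Where you diverge is the arithmetic step, and there your assessment of what is needed is off. The paper never solves $am+bn=k$ for \emph{all} large integers $k$, and it does not have to: by the first (unlabeled) lemma of \Cref{sec:AffineModels}, for any biclosed $B$ and any $\gamma \in \Phi_0$, either $\gamma + k\delta \in B$ for all sufficiently large $k$ or $\gamma + k\delta \notin B$ for all sufficiently large $k$. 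Hence exhibiting $\gamma + k\delta \in B$ for infinitely many large $k$ --- e.g.\ for the integers among $am+bn$ with $m \geq M$, $n \geq N$, an automatically infinite set since $a, b \in \QQ_{>0}$ (take $m,n$ in suitable congruence classes) --- already forces $\gamma \in B_\infty$. So the claim in your final paragraph, that membership along a mere sublattice of $k$'s ``would not yield $\gamma \in B_\infty$,'' is mistaken; indeed you invoke this very dichotomy implicitly in your coclosed case, when you upgrade $\alpha \notin B_\infty$ (which by definition only rules out \emph{eventual} membership) to $\alpha + m\delta \notin B$ for all $m \geq M$. That said, your extra arithmetic is sound: the coefficient pairs $(a,b)$ arising in the finite crystallographic rank two types do all satisfy $1 \in a\ZZ + b\ZZ$ (the worst cases, such as $(a,b)=(\tfrac13,\tfrac13)$ in $G_2$ and $(\tfrac12,\tfrac12)$ in $B_2$, have coprime numerators over the common denominator), so your version is a valid, self-contained variant that obtains $\gamma + k\delta \in B$ for every large $k$ directly from the definition of $B_\infty$, at the cost of a finite case check the paper's appeal to the dichotomy lemma renders unnecessary.
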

\begin{proof}
	Let $\alpha$, $\beta$ and $\gamma \in \Phi_0$ such that $\gamma = a \alpha + b \beta$ for $a$ and $b \in \QQ_{>0}$. 
	
	Suppose that $\alpha$ and $\beta \in B_{\infty}$. For all sufficiently large integers $p$ and $q$, we have $\alpha + p \delta$ and $\beta + q \delta \in B$. 
	Assuming we choose $p$ and $q$ such that $ap+bq$ is an integer, $a (\alpha + p \delta) + b (\beta + q \delta) = \gamma + (ap+bq) \delta$ is a positive root and lies in a $\tA_1$ subsystem in between  $\alpha + p \delta$ and $\beta + q \delta$.
	So, for all sufficiently large $p$ and $q$ such that $ap+bq$ is an integer, $\gamma + (ap+bq) \delta$ is in $B$, and this forces $\gamma$ into $B_{\infty}$ by \Cref{lem:sufficientlylarge}.
	
	The case where $\alpha$ and $\beta \not\in B_{\infty}$ is exactly analogous.
\end{proof}
We therefore want to classify biclosed sets in $\Phi_0$. 
Once we have done this, we will return then classify which biclosed sets in $\Phi^+$ can give rise to a particular biclosed set in $\Phi_0$.

\subsection{Classification of biclosed subsets of \texorpdfstring{$\Phi_0$}{Phi\_0}}
To understand the sets which can appear as $B_\infty$ for some biclosed set $B$, 
we will use the Coxeter fan described in \Cref{FiniteAffineGroups}.
Let $F$ be a face of the $W_0$-Coxeter fan and let $f$ lie in the relative interior of $F$.
Then $\{ \beta \in \Phi_0 : \langle f, \beta \rangle < 0 \}$ and $\{ \beta \in \Phi_0 : \langle f, \beta \rangle \leq 0 \}$ are both biclosed sets. 
We now describe a more general construction that interpolates between $\{ \beta \in \Phi_0 : \langle f, \beta \rangle < 0 \}$ and $\{ \beta \in \Phi_0 : \langle f, \beta \rangle \leq 0 \}$.

Let $\Phi_0^F =  \{ \beta \in \Phi_0 : \langle f, \beta \rangle  = 0\}$. 
If $\Phi'_0$ is any union of indecomposable components of $\Phi^F_0$, we define:
\[ B_0(F, \Phi'_0) \coloneqq   \{ \beta \in \Phi_0 : \langle f, \beta \rangle  < 0\}  \cup \Phi'_0 \]
The following is the main result of {Đ}okovi\'{c}, Check, and H\'{e}e~\cite[Theorem 4]{Dokovic1994}:
\begin{theorem} \label{thm:biclosedFinite}
	The biclosed subsets of $\Phi_0$ are precisely the sets $B_0(F, \Phi'_0)$.
\end{theorem}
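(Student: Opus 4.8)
The plan is to reduce everything to rank two and then glue. A betweenness relation $\gamma\in\RR_{>0}\alpha+\RR_{>0}\beta$ only constrains the roots $\alpha,\beta,\gamma$, and these all lie in the full rank two (or rank one) subsystem $\Psi=\Phi_0\cap\spn(\alpha,\beta)$. Hence a set $C\subseteq\Phi_0$ is biclosed in $\Phi_0$ if and only if $C\cap\Psi$ is biclosed in $\Psi$ for every full rank two subsystem $\Psi$. I would first settle the rank $\le 2$ case directly; this is the base case and is exactly \Cref{thm:biclosedFinite} for $\Psi$. Ordering the roots of a planar system $\Psi$ cyclically by angle, one checks that its biclosed subsets are precisely $\{\gamma\in\Psi:\langle f,\gamma\rangle<0\}\cup\Phi'$, where $f$ is a functional on the plane and $\Phi'$ is a union of irreducible components of $\Psi\cap f^{\perp}$. (When $\Psi$ is irreducible this means $\Phi'$ is $\emptyset$ or all of $\Psi\cap f^{\perp}$; the extra freedom appears only for $\Psi$ of type $A_1\times A_1$.)

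\textbf{Each $B_0(F,\Phi'_0)$ is biclosed.} Pick $f$ in the relative interior of $F$, so that $\Phi_0^F=\Phi_0\cap f^{\perp}$, and intersect with an arbitrary full rank two $\Psi$: one has $B_0(F,\Phi'_0)\cap\Psi=\{\gamma\in\Psi:\langle f,\gamma\rangle<0\}\cup(\Phi'_0\cap\Psi)$. Now $\Psi\cap f^{\perp}=\Psi\cap\Phi_0^F$, and $\Phi'_0\cap\Psi$ is a union of irreducible components of this set: each component of $\Psi\cap\Phi_0^F$ is connected, hence lies inside a single component of $\Phi_0^F$, which is wholly inside or outside $\Phi'_0$. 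By the base case, $B_0(F,\Phi'_0)\cap\Psi$ is biclosed in $\Psi$ for every $\Psi$, so $B_0(F,\Phi'_0)$ is biclosed.

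\textbf{Every biclosed $C$ has this form, granting a separating functional.} Given biclosed $C$, let $P=\{\gamma\in\Phi_0:\gamma\in C\Leftrightarrow-\gamma\in C\}$ be the roots $C$ treats symmetrically, and call $C\setminus P$ the strictly included roots (those $\gamma\in C$ with $-\gamma\notin C$). I claim it suffices to produce a functional $f$ with $f|_{\spn(P)}=0$ and $\langle f,\gamma\rangle<0$ for every $\gamma\in C\setminus P$. Such an $f$ is automatically nonzero on every asymmetric root (negative on those in $C$, positive on their negatives), so $f^{\perp}\cap\Phi_0=P$; taking $F$ to be the face of the Coxeter fan whose relative interior contains $f$, we get $\Phi_0^F=P$ and $\{\gamma:\langle f,\gamma\rangle<0\}=C\setminus P$, whence $B_0(F,C\cap P)=(C\setminus P)\cup(C\cap P)=C$. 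That $C\cap P$ is a legitimate choice, i.e.\ a union of irreducible components of $\Phi_0^F=P$, follows because $C\cap P$ is a symmetric biclosed subset of the subsystem $P$, and, by the rank two base case applied inside $P$, a symmetric biclosed subset of any root system is a union of its components.

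\textbf{The main obstacle.} By a standard separation theorem, the functional $f$ above exists precisely when no nontrivial nonnegative combination of the roots of $C\setminus P$ lies in $\spn(P)$ — equivalently, when the images of $C\setminus P$ in $V_0/\spn(P)$ all lie in a common open halfspace. This is the crux, and the only place where biclosedness is used globally rather than one rank two subsystem at a time. I would prove it by contradiction: a violating relation $\sum_{\gamma\in C\setminus P}a_\gamma\gamma\in\spn(P)$ (with $a_\gamma\ge 0$ not all zero) of minimal support, together with Carathéodory's theorem and the geometry of roots, should be forced into a single full rank two subsystem $\Psi$ on which $C\cap\Psi$ violates the base case classification, contradicting the first paragraph. (An alternative route is to prove the converse by induction on $\rank\Phi_0$, peeling off the parabolic $P$ and using that a biclosed set containing exactly one root of each antipodal pair is the positive system of a generic functional.) Finally, the parametrization is injective, so each biclosed set is listed once: $C$ recovers $P=\Phi_0^F$ as its symmetric roots and $\Phi'_0=C\cap P$, while the sign pattern of $f$ on $\Phi_0\setminus P$ pins down $F$ among the faces with that wall.
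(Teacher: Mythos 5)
Your overall architecture is sound, and the easy halves check out: the rank-two locality of biclosedness, the verification that each $B_0(F,\Phi'_0)$ is biclosed by intersecting with full rank two subsystems, the reconstruction $C = B_0(F, C\cap P)$ once a functional $f$ with $f|_{\spn(P)}=0$ and $\langle f,\gamma\rangle<0$ for $\gamma\in C\setminus P$ is in hand, and the injectivity remark are all correct. But be aware that the paper does not prove \Cref{thm:biclosedFinite} at all: it is quoted as the main result of {\DJ}okovi\'{c}, Check, and H\'{e}e \cite{Dokovic1994}, and the paper's only proof obligation is the small bridging lemma that its betweenness-closure agrees with the additive closure ($\alpha+\beta=\gamma$) used in that reference, verified by a finite check in $A_1\times A_1$, $A_2$, $B_2$, $G_2$. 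So you are attempting from scratch a theorem the authors outsource, and the part you leave unproven is precisely its entire content.

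The genuine gap is the paragraph you label ``the main obstacle.'' The separation statement --- no nontrivial nonnegative combination of the strictly included roots $C\setminus P$ lies in $\spn(P)$ --- is asserted with ``should be forced into a single full rank two subsystem,'' and this mechanism does not obviously work: Carath\'{e}odory applied to a minimal violating relation yields at most $\dim V_0 + 1$ roots in a positive relation modulo $\spn(P)$, with no reason for three of them to be coplanar, so you cannot directly localize the contradiction to one rank two subsystem where the base-case classification applies. The known arguments instead exploit crystallographicity globally: in a relation $\sum a_\gamma \gamma \in \spn(P)$ some pair satisfies $\langle \gamma_i, \gamma_j^\vee\rangle < 0$, hence $\gamma_i + \gamma_j \in \Phi_0 \cup \{0\}$, and closedness of $C$ lets one replace the pair and induct on the number of terms --- with care needed when the sum lands in $P$ or produces a root and its negative both in $C\setminus P$. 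That induction, not a rank-two reduction, is where the work lies, and it is exactly what \cite{Dokovic1994} supplies. Your parenthetical alternative has the same character: the fact that a closed subset containing exactly one of each pair $\pm\gamma$ is the positive system of some base is itself a nontrivial classical theorem, fine to cite but not free. As written, then, your proposal proves sufficiency and uniqueness but reduces necessity to an unproven lemma that is the theorem's core; either carry out the additive-closure induction or, as the paper does, cite it.
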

We remark on some differences between the statement in their paper and the one shown here. The statement of the theorem in \cite{Dokovic1994} gives only $W_0$-orbit representatives of biclosed sets. The representatives they list are exactly the sets $B_0(F,\Phi'_0)$ where $F$ is a face in the dominant chamber of the Coxeter fan. (More specifically, their $\Delta$ is a base for our $\Phi_0^F$, and their $\Delta'$ is a base for our $\Phi_0'$.) Because every element of $\faces(W_0)$ is in the $W_0$-orbit of a face in the dominant chamber, the biclosed sets they construct are the same as ours. 
A more serious difference is that their article defines closed sets to be those subsets $B$ of $\Phi_0$ such that if $\alpha,\beta\in B$, then $\alpha+\beta\in B$. This differs from our notion of closed. However, by the following Lemma, their definition gives the same biclosed sets of $\Phi_0$ as our definition.
\begin{lemma}
	Let $B$ be a subset of a finite crystallographic root system $\Phi_0$. Then $B$ is biclosed if and only if, for all $\alpha,\beta,\gamma\in \Phi_0$ such that $\alpha+\beta=\gamma$, if $\alpha$ and $\beta\in B$ then $\gamma\in B$, and if $\alpha$ and $\beta\not\in B$ then $\gamma\not\in B$.
\end{lemma}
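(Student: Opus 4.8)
The forward direction is immediate: taking the scalars to be $1$, the relation $\gamma = \alpha+\beta$ is a special case of $\gamma \in \RR_{>0}\alpha + \RR_{>0}\beta$, so a set that is biclosed in our sense automatically satisfies the additive condition. All the content is in the converse, so I assume $B$ satisfies the additive condition and aim to show $B$ is biclosed. It is convenient to first reduce to the closure half alone: the additive condition on $B$ is symmetric under $B \mapsto \Phi_0 \setminus B$, and likewise betweenness-coclosure of $B$ is betweenness-closure of $\Phi_0 \setminus B$, so once I prove ``additively closed and coclosed $\Rightarrow$ betweenness-closed'' for every set, applying it to $B$ and to $\Phi_0\setminus B$ yields both halves.

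My second reduction is to rank $2$. If $\gamma \in \RR_{>0}\alpha + \RR_{>0}\beta$ with $\alpha,\beta,\gamma$ roots, then $\alpha,\beta$ are linearly independent (the degenerate cases $\beta=\pm\alpha$ give no genuine betweenness triple), so the triple lies in the plane $\spn(\alpha,\beta)$; the roots of $\Phi_0$ in that plane form a full rank $2$ subsystem, necessarily of type $A_1 \times A_1$, $A_2$, $B_2$ or $G_2$. Hence it suffices to verify the implication inside each rank $2$ type, and the same localization applies to the additive condition. Type $A_1 \times A_1$ has no betweenness triples at all, and in type $A_2$ every betweenness triple is literally an exact sum $\gamma=\alpha+\beta$ (all roots have equal length, and the only roots with something strictly between them subtend $120^\circ$), so in both of these the additive and betweenness conditions coincide verbatim.

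The cases $B_2$ and $G_2$ are where the two conditions genuinely differ, because betweenness can occur without an exact integer relation: in $B_2$, with $\alpha = e_1-e_2$, $\beta = e_1+e_2$ and $\gamma = e_1$, we have $\gamma \in \RR_{>0}\alpha + \RR_{>0}\beta$ but $\alpha+\beta = 2\gamma$. The tool I will use to manufacture usable additive relations is the unbroken root-string property: if $\rho,\sigma$ are roots with $(\rho,\sigma)>0$, then $\rho-\sigma$ is again a root. For a general betweenness triple $\gamma = a\alpha+b\beta$ with $a,b>0$, pairing with $\gamma$ gives $(\gamma,\gamma)=a(\gamma,\alpha)+b(\gamma,\beta)>0$, so $(\gamma,\alpha)>0$ or $(\gamma,\beta)>0$ and some summand can be peeled off into an honest additive relation. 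When the peeled-off root remains between $\alpha$ and $\beta$, I descend by induction on the number of roots in the open cone $\RR_{>0}\alpha + \RR_{>0}\beta$, reducing such triples to chains of exact sums; this disposes of every triple except those in which the middle root $\gamma$ is strictly shorter than both $\alpha$ and $\beta$, so that $\alpha+\beta = c\gamma$ with $c \in \{2,3\}$.

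These shortest-middle configurations are the crux, and I expect them to be the main obstacle, since there the peeling leaves the cone and a naive induction breaks down; the point is that they require using \emph{both} halves of the additive hypothesis at once. For such a triple one checks $(\alpha,\gamma)>0$ and $(\beta,\gamma)>0$, so both $\alpha-\gamma$ and $\beta-\gamma$ are roots. Assuming $\alpha,\beta\in B$ and $\gamma\notin B$, I will apply additive coclosure to $\alpha = \gamma + (\alpha-\gamma)$ and to $\beta = \gamma + (\beta-\gamma)$ to force $\alpha-\gamma\in B$ and $\beta-\gamma\in B$; then, since $(\alpha-\gamma)+(\beta-\gamma) = (c-2)\gamma$, additive closure applied to $\gamma = (\alpha-\gamma)+(\beta-\gamma)$ when $c=3$, or to $\gamma = \beta + (\gamma-\beta) = \beta + (\alpha-\gamma)$ when $c=2$, yields $\gamma\in B$, a contradiction. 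Running the finitely many shortest-middle triples of $B_2$ and $G_2$ through this argument completes the rank $2$ check, and with it the equivalence.
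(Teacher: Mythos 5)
Your overall route coincides with the paper's: the forward direction is immediate, and the converse is reduced to a verification in the four rank~$2$ types $A_1 \times A_1$, $A_2$, $B_2$, $G_2$ (the paper declares this check ``straightforward'' and omits it). Your complement symmetry is correct, your $A_1\times A_1$ and $A_2$ analysis is correct, and your two-sided crux argument is valid where it applies: if $\alpha + \beta = c\gamma$ with $c \in \{2,3\}$ and $(\alpha,\gamma), (\beta,\gamma) > 0$, then $\alpha - \gamma$ and $\beta - \gamma$ are roots, additive coclosure forces them into $B$, and additive closure then forces $\gamma \in B$. This disposes of the $B_2$ long--long cone and of the $60^\circ$ long--long and $90^\circ$ short--long cones in $G_2$ (in the latter, note your phrase ``strictly shorter than both'' is inaccurate --- the $c=2$ middle root has the same length as the short endpoint --- but harmlessly so, since your argument only uses the relation $\alpha+\beta = c\gamma$).

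The genuine gap is in your claimed dichotomy that peeling disposes of every triple except those with $\alpha + \beta = c\gamma$. This fails in $G_2$. Normalize $G_2$ with short roots of length $1$ at angles $0^\circ, 60^\circ, \ldots$ and long roots of length $\sqrt{3}$ at $30^\circ, 90^\circ, \ldots$, and take $\alpha$ long at $30^\circ$, $\beta$ long at $150^\circ$, and $\gamma$ short at $60^\circ$, so that $\gamma = \tfrac{2}{3}\alpha + \tfrac{1}{3}\beta$ lies strictly between. Then $(\gamma, \beta) = 0$, so $\beta$ cannot be peeled; $(\gamma,\alpha) > 0$ but $\gamma - \alpha$ is the short root at $180^\circ$, outside the cone; and $\alpha + \beta$ is the long root at $90^\circ$, not a multiple of $\gamma$. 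Your crux argument does not apply either: $\beta - \gamma$ has length $2$ and is not a root. So this triple (and its mirror image, the short root at $120^\circ$) is covered by neither of your two mechanisms as stated. The repair is short and stays inside your framework: first apply the additive hypothesis to the exact sum $\alpha + \beta$, which places the long root at $90^\circ$ in $B$; then each of the two short interior roots is the middle of a $60^\circ$ long--long sub-cone with both endpoints now known to lie in $B$, and your $c=3$ crux argument applies there. In other words, the induction must be organized so that the inductive step may invoke crux sub-cones whose endpoints were obtained earlier, not only chains of exact sums terminating at $\alpha$ and $\beta$. With that restructuring your rank~$2$ check becomes complete and supplies exactly the detail the paper leaves to the reader.
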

\begin{proof}
	The forwards direction is clear. To prove the converse, it is enough to prove it for the four different types of finite rank 2 system: $A_1\times A_1$, $A_2$, $B_2$, and $G_2$. For each of these it is a straightforward check.
\end{proof}

Let $\Phi_F =  \{ \beta \in \Phi^+ : \langle F, \beta \rangle  = 0\}$; this is the root system of the face subgroup $W_F$.
The indecomposable subsytems of $\Phi_F$ are in bijection with the indecomposable subsystems of $\Phi^F_0$. We let $\Phi'$ be a union of indecomposable subsystems of $\Phi_F$. 
We now associate a ``standard'' biclosed subset of $\Phi^+$ to each $(F, \Phi')$. Let $f$ be a functional in the relative interior of $F$.
We define 
\[ B(F, \Phi') \coloneqq   \{ \beta \in \Phi^+ : \langle f, \beta \rangle  < 0\}  \cup (\Phi' \cap \Phi^+) .\]

The following is immediate from the definition:
\begin{lemma} \label{lem:BasicRepresentative}
	With the above notation, we have
	\[ B(F, \Phi')_{\infty} = B_0(F, \Phi'_0) . \]
\end{lemma}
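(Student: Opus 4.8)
The plan is to unwind both sides of the claimed equality and compare them root-by-root over $\Phi_0$, exploiting that $f$ annihilates $\delta$. First I would record the basic observation that, since $f$ lies in the relative interior of $F$ and hence in $V_0^\vee = \delta^\perp \subseteq V^\vee$, the pairing satisfies $\langle f, \beta + k\delta \rangle = \langle f, \beta \rangle$ for every $\beta \in \Phi_0$ and every $k \in \ZZ$; in particular it does not depend on $k$. I would also note that for each fixed $\beta \in \Phi_0$ the vector $\beta + k\delta$ is a positive root once $k \gg 0$. Together these two facts mean that the eventual membership of $\beta + k\delta$ in $B(F,\Phi')$ is governed entirely by $\beta$ itself, so that computing $B(F,\Phi')_\infty$ reduces to a finite case analysis over $\beta \in \Phi_0$ according to the sign of $\langle f, \beta \rangle$.

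Next I would carry out that case analysis, recalling that $B(F,\Phi') = \{ \beta \in \Phi^+ : \langle f, \beta \rangle < 0 \} \cup (\Phi' \cap \Phi^+)$ and that $\beta \in B(F,\Phi')_\infty$ exactly when $\beta + k\delta \in B(F,\Phi')$ for $k \gg 0$. If $\langle f, \beta \rangle < 0$, then for $k \gg 0$ the root $\beta + k\delta$ is positive with $\langle f, \beta + k\delta \rangle < 0$, so it lies in the first set and $\beta \in B(F,\Phi')_\infty$; on the other side $\beta$ lies in the first set defining $B_0(F,\Phi'_0)$, so $\beta \in B_0(F,\Phi'_0)$ as well. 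If $\langle f, \beta \rangle > 0$, then $\beta + k\delta$ never lies in the first set, and $\beta + k\delta \notin \Phi_F$ because $\langle f, \beta + k\delta \rangle \neq 0$, hence $\beta + k\delta \notin \Phi' \subseteq \Phi_F$; thus $\beta \notin B(F,\Phi')_\infty$, while simultaneously $\beta \notin B_0(F,\Phi'_0)$ since $\langle f, \beta \rangle \not< 0$ and $\beta \notin \Phi'_0 \subseteq \Phi_0^F$. This leaves the boundary case $\langle f, \beta \rangle = 0$, i.e. $\beta \in \Phi_0^F$, to be treated separately.

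The one point requiring care — and essentially the only content of the lemma — is this boundary case, where everything hinges on the stated bijection between the connected components of $\Phi_F$ and those of $\Phi_0^F$. For $\beta \in \Phi_0^F$ the pairing $\langle f, \beta + k\delta \rangle$ vanishes, so membership of $\beta + k\delta$ in $B(F,\Phi')$ is decided purely by membership in $\Phi'$. I would argue that, under the component bijection, the affine roots $\beta + k\delta$ lying over a finite component $C_0$ of $\Phi_0^F$ are exactly the roots of the corresponding component of $\Phi_F$; hence, since $\Phi'$ is a union of whole components, $\beta + k\delta \in \Phi'$ for $k \gg 0$ if and only if $\beta \in \Phi'_0$. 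Consequently $\beta \in B(F,\Phi')_\infty$ iff $\beta \in \Phi'_0$ iff $\beta \in B_0(F,\Phi'_0)$. Combining the three cases gives $B(F,\Phi')_\infty = B_0(F,\Phi'_0)$, as claimed.
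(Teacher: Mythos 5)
Your proof is correct and is essentially the paper's argument spelled out: the paper states this lemma as ``immediate from the definition,'' and your observation that $\langle f, \beta + k\delta\rangle = \langle f,\beta\rangle$ (since $f \in \delta^\perp$) together with the three-way case analysis on the sign of $\langle f,\beta\rangle$ is exactly the definitional check being waved at, with the only substantive point being the $\langle f,\beta\rangle = 0$ case, which you correctly resolve via the paper's stated bijection between components of $\Phi_F$ and of $\Phi_0^F$. Nothing is missing.
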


We also verify:

\begin{lemma} \label{lem:BasicRepresentativeBiclosed}
	The sets $B(F, \Phi')$ are biclosed in $\Phi^+$.
\end{lemma}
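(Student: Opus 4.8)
The plan is to verify biclosedness of $B(F, \Phi')$ by checking the defining condition on each full rank $2$ subsystem $\Phi''$ of $\Phi$, using the characterization from \Cref{ClosedBiclosedSection}: a set is biclosed exactly when its intersection with every full rank $2$ subsystem is either a down-set or an up-set in the linear order on $\Phi''_+$. Fix $f$ in the relative interior of $F$, so that $B = B(F,\Phi') = \{\beta \in \Phi^+ : \langle f,\beta\rangle < 0\} \cup (\Phi' \cap \Phi^+)$. The key observation is that the linear functional $\langle f, -\rangle$ is monotone along the order of any rank $2$ subsystem: if $\alpha \to \gamma \to \beta$ with $\gamma = a\alpha + b\beta$ for $a,b > 0$, then $\langle f,\gamma\rangle = a\langle f,\alpha\rangle + b\langle f,\beta\rangle$ lies between $\langle f,\alpha\rangle$ and $\langle f,\beta\rangle$. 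Hence the sign pattern of $\langle f,-\rangle$ along the ordered subsystem changes at most once, from negative to nonnegative (after possibly reversing the order).

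The main case analysis is by how $\Phi''$ meets the hyperplane $f^\perp$. First I would treat the subsystems $\Phi''$ that meet $f^\perp$ in at most a point: then $\langle f,-\rangle$ is strictly monotone (up to reversal) along $\Phi''_+$, so the ``strict negative'' set $\{\beta : \langle f,\beta\rangle < 0\} \cap \Phi''_+$ is already a down-set, and since $\Phi' \cap \Phi^+$ contributes nothing here (any root of $\Phi'$ satisfies $\langle f,\cdot\rangle = 0$), $B \cap \Phi''$ is a down-set and we are done. The substantive case is when $\Phi''_+$ contains roots of $\Phi_F = \{\beta \in \Phi^+ : \langle f,\beta\rangle = 0\}$, i.e.\ roots lying on $f^\perp$. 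For these I would argue that the roots $\gamma$ of $\Phi''$ with $\langle f,\gamma\rangle = 0$ form a contiguous block in the middle of the order (by the betweenness/monotonicity observation), flanked by a block of strictly negative roots on one side and strictly positive roots on the other. Within the zero block, $B$ contains a root precisely when it lies in $\Phi'$; so I must check that $\Phi' \cap \Phi''$ is itself an up-set or down-set within that block, which is where $\Phi'$ being a \emph{union of whole components} of $\Phi_F$ is essential.

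The crux is therefore the behavior on the zero block, and I expect this to be the main obstacle. Here I would use that $\Phi'$ is a union of irreducible components of $\Phi_F$, which are mutually orthogonal. The rank $2$ subsystem $\Phi''$ restricted to the hyperplane $f^\perp$ is itself a rank $\leq 2$ subsystem of $\Phi_F$; because distinct components of $\Phi_F$ are orthogonal, $\Phi''$ can meet at most one component in an $A_2$, $B_2$, or $G_2$ pattern (a genuinely irreducible rank $2$ piece lives inside a single component), while roots from two different components can only meet $\Phi''$ in an $A_1 \times A_1$ configuration. In the former situation, either the whole irreducible rank $2$ piece is in $\Phi'$ or none of it is, so $\Phi' \cap \Phi''$ is all of the zero block or empty; in the $A_1 \times A_1$ situation the two positive roots are unordered so every subset is trivially both an up-set and a down-set. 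In all cases $\Phi' \cap \Phi''$ is a down-set (indeed an interval at the top or bottom of the zero block), and splicing this together with the strictly-negative block immediately below and the strictly-positive block above shows $B \cap \Phi''$ is a down-set.

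Finally I would remark that the orientation is consistent: orienting each $\Phi''_+$ so that $\langle f,-\rangle$ is nondecreasing, the strictly negative roots form the bottom, the zero block sits in the middle, and the strictly positive roots form the top, so $B \cap \Phi'' = (\text{strict negatives}) \cup (\Phi' \cap \text{zero block})$ is always a down-set. Since this holds for every full rank $2$ subsystem, $B(F,\Phi')$ is biclosed. The only genuine content beyond bookkeeping is the orthogonality argument ruling out $\Phi'$ cutting an irreducible rank $2$ piece into a non-up/down-set, so I would make sure to state clearly why a full rank $2$ subsystem contained in $\Phi_F$ lies within a single component or splits as $A_1 \times A_1$ across orthogonal components.
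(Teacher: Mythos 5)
Your proof is correct and is essentially the paper's argument: both rest on the sign trichotomy of $\langle f, \cdot \rangle$ (negative/zero/positive) together with the mutual orthogonality of the indecomposable components of $\Phi_F$ to handle the all-zero case, the only difference being that you package the case analysis via the rank~2 up-set/down-set characterization while the paper checks the closure conditions on triples $\gamma \in \RR_{>0}\alpha + \RR_{>0}\beta$ directly. One small slip worth fixing: for general $a, b > 0$ the value $a\langle f,\alpha\rangle + b\langle f,\beta\rangle$ need not lie between $\langle f,\alpha\rangle$ and $\langle f,\beta\rangle$ --- only the sign-convexity statement is true, and that is all your block-structure argument actually uses.
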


\begin{proof}
	We abbreviate $B_-: = \{ \beta \in \Phi^+ : \langle f, \beta \rangle  < 0\}$, $B_0 \coloneqq  \{ \beta \in \Phi^+ : \langle f, \beta \rangle  = 0\}$ and $B_+: = \{ \beta \in \Phi^+ : \langle f, \beta \rangle  > 0\}$. 
	So $B(F, \Phi')$ is $B_- \cup (\Phi' \cap \Phi^+)$ and $\Phi^+ \setminus B(F, \Phi')$ is $B_+ \cup (B_0 \setminus \Phi')$. 
	Now, suppose that we have distinct roots $\alpha$, $\beta$ and $\gamma$ in $\Phi^+$ with $\gamma \in \RR_{>0} \alpha + \RR_{>0} \beta$. 
	
	If $\alpha$ and $\beta$ are in $(B_+, B_-)$ or $(B_-, B_+)$ (respectively), then the biclosure condition is automatic.
	If $\alpha$ and $\beta$ are in $(B_{\pm}, B_{\pm})$, $(B_{\pm}, B_0)$ or $(B_0, B_{\pm})$ (respectively, with all signs the same), then the condition on $\langle f, \ \rangle$ implies that $\gamma$ is in $B_{\pm}$ as well, and the biclosure condition follows. 
	The only remaining case is where $\alpha$ and $\beta \in B_0$. In this case, the condition on $\langle f, \ \rangle$ implies that $\gamma$ is in $B_0$. 
	Since the different indecomposable components of $\Phi_F$ are mutually orthogonal root systems, the condition that $\gamma \in \RR_{>0} \alpha + \RR_{>0} \beta$ implies that $\alpha$, $\beta$ and $\gamma$ are all in the same component of $\Phi_F$, so either all of them are in $\Phi'$ or none are and, in either case, the biclosed condition is satisfied.
\end{proof}

Thus, combining 
\Cref{lem:CommensurableInfinity,lem:BInfinityClosed,lem:BasicRepresentative,lem:BasicRepresentativeBiclosed} and \Cref{thm:biclosedFinite}, we see that we can classify the biclosed subsets of $\Phi$ by finding the biclosed sets which are commensurable to $B(F, \Phi')$ for each $(F, \Phi')$. 
We carry out this task in the next section.

\subsection{The commensurability class of \texorpdfstring{$B(F, \Phi')$}{B(F,Phi')}} \label{subsec:CommensurabilityClasses}
We note that $B(\{ 0 \}, \emptyset) = \emptyset$ and that the commensurability class of $B(\{ 0 \}, \emptyset)$ consists of the finite biclosed sets which, by \Cref{FiniteBiclosed}, are in bijection with the elements of $W$.
Our goal will be to generalize this by giving a bijection between the commensurability class of $B(F, \Phi')$ and the face subgroup $W_F$.

To do this, we define an action of $W$ on the class of biclosed sets, introduced in~\cite[Section 4.1]{Dyer2019}.
The easiest way to describe this action is in terms of the doubling construction from Lemma~\ref{lem:DoubleCondition}: if $w$ is in $W$ and $B$ is a biclosed set, then $w\cdot B$ is defined to be the unique biclosed set such that
\[ D(w \cdot B) = w D(B). \]
We can rewrite this formula (less naturally) so to not mention $D$: We have
\[ \begin{array}{lcl}
	w \cdot B &=& \{ w \gamma : w \gamma \in \Phi^+,\ \gamma \in \Phi^+, \ \gamma \in B \} \cup    \{ w \gamma : w \gamma \in \Phi^+,\ (-\gamma) \in \Phi^+, \ (-\gamma) \not\in B \} \\
	&=& \{ |w \gamma| : \gamma \in B \} \oplus N(w), \\
\end{array}\]
where $|\phi|$ is $\phi$ if $\phi \in \Phi^+$ and $-\phi$ if $- \phi \in \Phi^+$. With this definition, it holds that $v\cdot N(w) = N(vw)$ for any elements $v,w$ of $W$.

\begin{lemma} \label{lem:ActionFormula}
	For $w\in W_F$, we have
	\[ w\cdot B(F,\Phi') =
	B(F,\Phi')\oplus (N(w)\cap \Phi_F). \]
\end{lemma}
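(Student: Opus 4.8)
The goal is to show $w \cdot B(F, \Phi') = B(F, \Phi') \oplus (N(w) \cap \Phi_F)$ for $w \in W_F$. My plan is to work through the doubling formula $D(w \cdot B(F,\Phi')) = w\, D(B(F,\Phi'))$ and track what happens to each root of $\Phi^+$ under $w$, partitioning $\Phi^+$ according to the sign of $\langle f, \cdot \rangle$ exactly as in the proof of \Cref{lem:BasicRepresentativeBiclosed}. Write $B_- = \{\beta \in \Phi^+ : \langle f, \beta\rangle < 0\}$, $B_0 = \Phi_F = \{\beta \in \Phi^+ : \langle f, \beta \rangle = 0\}$, and $B_+ = \{\beta \in \Phi^+ : \langle f, \beta \rangle > 0\}$, so that $B(F,\Phi') = B_- \cup (\Phi' \cap \Phi^+)$. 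The key structural fact I would exploit is that $w \in W_F$ fixes $f$ (since $W_F$ is the stabilizer of $F$, and hence of any interior point $f$), so $\langle f, w\gamma \rangle = \langle w^{-1} f, \gamma\rangle = \langle f, \gamma\rangle$ for every root $\gamma$. Consequently $w$ preserves each of the three sets $B_-$, $B_0$, $B_+$ setwise (as subsets of $\Phi$, up to sign), and in particular $w$ permutes $\Phi_F$.

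First I would handle the roots $\gamma$ with $\langle f, \gamma\rangle \neq 0$. For such $\gamma$, the reflections generating $W_F$ fix $\gamma$ modulo lower-order behavior — more precisely, since $w$ preserves the sign of $\langle f, \cdot\rangle$ and $N(w) \subseteq \Phi_F$ (because $w \in W_F$, all its inversions lie in the parahoric root system $\Phi_F = B_0$), the symmetric difference $\oplus\, (N(w) \cap \Phi_F)$ touches only $B_0$. So on the $B_\pm$ part I must check that $w \cdot B(F,\Phi')$ agrees with $B(F,\Phi')$ itself. Using $w\, D(B) = D(w\cdot B)$ and the explicit formula $w \cdot B = \{|w\gamma| : \gamma \in B\} \oplus N(w)$, the sign-preservation $\langle f, w\gamma\rangle = \langle f, \gamma\rangle$ shows that $w$ sends a root in $B_-$ to a root in $B_-$ and a root in $B_+$ to a root in $B_+$ (after taking $|\cdot|$), so membership in $B_-$ is preserved by the action. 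Thus the $B_\pm$ contributions match on both sides.

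The substantive step is the $B_0 = \Phi_F$ part, and this is where I expect the main obstacle. On $\Phi_F$, the set $B(F,\Phi')$ restricts to $\Phi' \cap \Phi^+$, which is exactly the set of positive roots of the sub-root-system $\Phi'$ (a union of components), i.e.\ it is the inversion set of the identity inside $\Phi_F$ under the orientation making $\Phi' \cap \Phi^+$ the positive cone. What I need is that the $W$-action restricted to the commensurability class, intersected with $\Phi_F$, reproduces the ordinary left action $v \cdot N(u) = N(vu)$ of $W_F$ on its own finite inversion sets — so that acting by $w$ sends the "$\Phi'$-positive" configuration to its symmetric difference with $N(w)$. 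Concretely, I would invoke the identification (promised in \Cref{subsec:CommensurabilityClasses}) of inversions of $w \in W_F$ with elements of $\Phi_F^+$, together with the compatibility of the doubling action with this identification, to reduce the claim on $\Phi_F$ to the finite-type identity $\{|w\gamma|:\gamma\in \Phi'\cap\Phi^+\} \oplus N(w) = (\Phi'\cap\Phi^+)\oplus (N(w)\cap\Phi_F)$, which follows because $w$ permutes $\Phi_F$ and fixes the complementary components orthogonal to those defining $\Phi'$.

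Finally I would assemble the three cases: on $B_-$ and $B_+$ both sides equal $B(F,\Phi')$ and the symmetric difference with $N(w) \cap \Phi_F$ changes nothing there; on $B_0 = \Phi_F$ the action by $w$ realizes precisely the symmetric difference with $N(w) \cap \Phi_F$. Since these three regions partition $\Phi^+$ and $w$ preserves the partition, the two sides of the claimed identity agree root-by-root, giving $w \cdot B(F,\Phi') = B(F,\Phi') \oplus (N(w) \cap \Phi_F)$. The only delicate point to get right is the bookkeeping of signs in the doubling formula on $\Phi_F$, ensuring the finite-type inversion identity $v \cdot N(u) = N(vu)$ transfers correctly through the restriction to the parahoric subsystem.
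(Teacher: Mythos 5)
Your overall architecture (split $\Phi^+$ by the sign of $\langle f,\cdot\rangle$, note $w$ preserves that sign, treat $\Phi_F$ separately) matches the paper's proof, but there is a genuine error at the heart of your treatment of the $B_\pm$ part: the claim that $N(w)\subseteq \Phi_F$ for $w\in W_F$ is false. Parahoric subgroups are reflection subgroups, not standard parabolic subgroups, and inversion sets of their elements need not lie in the subsystem; only $N(w)\cap\Phi_F$ identifies with the inversion set of $w$ computed inside $W_F$ (this is why the lemma's statement intersects with $\Phi_F$ at all --- if your claim were true the intersection would be redundant). Concretely, in $\tA_2$ take $F$ with $\langle f,\alpha_1\rangle =0$ and $\langle f,\alpha_2\rangle>0$, so $\Phi_F=\{\pm\alpha_1+k\delta\}$, and take $w=t_{\delta-\alpha_1}\in W_F$: then $w(\alpha_2)=\alpha_1+\alpha_2-\delta<0$, so $\alpha_2\in N(w)\setminus\Phi_F$. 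This same example breaks your second step: you argue that sign-preservation of $\langle f,\cdot\rangle$ alone shows $w$ preserves membership in $B_\pm$ "after taking $|\cdot|$", but $w$ can flip the \emph{positivity} of a root outside $\Phi_F$ (exactly for the roots in $N(w)\setminus\Phi_F$), and then $|w\gamma|$ lands on the opposite side: if $\gamma\in B_-$ and $w\gamma<0$, then $|w\gamma|\in B_+$. In the explicit formula $w\cdot B=\{|w\gamma|:\gamma\in B\}\oplus N(w)$, these stray elements are cancelled precisely by the part of $N(w)$ lying outside $\Phi_F$ --- which is the bookkeeping your argument assumes away by asserting it is empty.

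The paper's proof sidesteps this entirely by never leaving the doubled world: it checks, root by root, that $\beta\in wD(B(F,\Phi'))$ iff $w^{-1}\beta\in D(B(F,\Phi'))$, and on $\Phi\setminus\pm\Phi_F$ membership in $D(B(F,\Phi'))$ is equivalent to the single condition $\langle F,\beta\rangle<0$, so sign-preservation genuinely suffices there with no positivity bookkeeping; on $\Phi_F$ it splits on whether $|\beta|\in N(w)\cap\Phi_F$ (i.e., whether $w^{-1}\beta$ and $\beta$ have the same or opposite signs) and uses that $\beta$ and $w^{-1}\beta$ lie in the same indecomposable component, hence are in $\Phi'$ together or not at all. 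Your treatment of the $\Phi_F$ part is in the right spirit (it is essentially the paper's Case 2, phrased as an inversion-set identity), though note that $W_F$ is a product of \emph{affine} groups and $\Phi'\cap\Phi^+$ is infinite, so "finite-type identity" is a misnomer. To repair your proof, either work at the level of $D$ as the paper does, or keep the explicit formula but explicitly verify the cancellation of $N(w)\setminus\Phi_F$ against the sign-flipped images $\{|w\gamma|:\gamma\in B,\ w\gamma<0\}$.
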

\begin{proof}
	In terms of the doubling construction, we need to show that 
	\[ w D(B(F, \Phi')) = D(B(F, \Phi')) \oplus \{ \pm \beta : \beta \in N(w) \cap \Phi_F \} . \]
	Given a root $\beta$, we must show it is contained in the left hand side if and only if it is contained in the right.
	
	\textbf{Case 1:} $\beta \not \in \Phi_F$. Then $\langle F,\beta \rangle \neq 0$. Since $w \in W_F$, we have $wF = F$ and $\langle F,w^{-1}(\beta) \rangle = \langle w F,\beta  \rangle = \langle F,\beta \rangle$. So $\langle F,w^{-1}(\beta) \rangle$ has the same (nonzero) sign as $\langle F,\beta \rangle$. So $w^{-1}(\beta) \in D(B(F, \Phi'))$ if and only if $\beta \in D(B(F, \Phi'))$. We see that $\beta$ is contained in the left hand side if and only if it is contained in the right.
	
	\textbf{Case 2:} $\beta \in \Phi_F$. In this case, we need to break into two cases. 
	
	\textbf{Case 2a:} $|\beta| \not\in N(w) \cap \Phi_F$. In this case, $w^{-1}(\beta)$ and $\beta$ have the same sign, and both of them are in the same indecomposable component of $\Phi_F$. So either $\beta$ and $w^{-1}(\beta)$ are both in $B(F, \Phi')$ or neither are, and we see that $\beta$ is in the left hand side if and only if it is in the right hand side.
	
	\textbf{Case 2b:} $|\beta| \in N(w) \cap \Phi_F$. In this case, $w^{-1}(\beta)$ and $\beta$ have opposite signs, and both of them are in the same indecomposable component of $\Phi_F$. 
	So exactly one of $\beta$ and $w^{-1}(\beta)$ is in $B(F, \Phi')$, and we see that $\beta$ is in the left hand side if and only if it is in the right hand side.
\end{proof}

\begin{theorem}
	The set of biclosed sets commensurable to $B(F, \Phi')$ is the $W_F$-orbit of $B(F, \Phi')$, and $W_F$ acts freely on this orbit. 
\end{theorem}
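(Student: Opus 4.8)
The plan is to establish the two containments between the commensurability class and the $W_F$-orbit separately, and then freeness. Throughout I write $B = B(F,\Phi')$ and let $f$ lie in the relative interior of $F$. The containment ``orbit $\subseteq$ class'' is immediate from \Cref{lem:ActionFormula}: for $w \in W_F$ we have $w \cdot B = B \oplus (N(w) \cap \Phi_F)$, and $N(w)$ is finite, so $w\cdot B$ is commensurable to $B$. For freeness I would use that $B \mapsto w\cdot B$ is a genuine $W$-action (defined via $D(w\cdot B) = wD(B)$), reducing the claim to showing that $w\cdot B = B$ forces $w = e$ for $w \in W_F$. By \Cref{lem:ActionFormula} the hypothesis reads $N(w) \cap \Phi_F = \emptyset$, and since $w$ preserves $\Phi_F$ this says $w^{-1}$ fixes the positive system $\Phi_F$ of $W_F$, whence $w = e$.

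The real content is the containment ``class $\subseteq$ orbit'': given a biclosed $C$ commensurable to $B$, I must produce $w \in W_F$ with $w \cdot B = C$. The first and hardest step is to show that $C$ and $B$ can only disagree inside $\Phi_F$, i.e.\ $C \oplus B \subseteq \Phi_F$. By \Cref{lem:CommensurableInfinity} I know $C_\infty = B_\infty = B_0(F,\Phi'_0)$, and I would exploit this on each $\tA_1$-subsystem $\Psi$ attached to a root $\beta_0 \in \Phi_0 \setminus \Phi_0^F$, whose positive part is ordered
\[ \beta_0 \to \beta_0 + \delta \to \beta_0 + 2\delta \to \cdots \to 2\delta - \beta_0 \to \delta - \beta_0. \]
Since $\langle f, \delta\rangle = 0$, the functional $\langle f, \cdot\rangle$ is a nonzero constant on the $+\beta_0$ half and its negative on the $-\beta_0$ half, so $B \cap \Psi$ is exactly the half selected by $\langle f, \cdot\rangle < 0$, an initial or final segment of the order. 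The set $C \cap \Psi$ is also an initial or final segment (it is biclosed), and knowing which of $\pm\beta_0$ lies in $C_\infty = B_\infty$ pins down its behaviour near the turning point of the order. The hard part will be the short but fiddly case analysis showing that a segment which contains $\beta_0 + k\delta$ for all large $k$ and excludes $m\delta - \beta_0$ for all large $m$ (or vice versa) must coincide with the half that $B$ selects; once this is checked, $C \cap \Psi = B \cap \Psi$, and as $\beta_0$ ranges over $\Phi_0 \setminus \Phi_0^F$ this gives $C \oplus B \subseteq \Phi_F$.

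With the disagreement confined to $\Phi_F$, the set $S := C \oplus B$ is a finite subset of $\Phi_F$, and I would finish by realizing $S$ as an inversion set in the Coxeter group $W_F$. Since $\Phi_F$ is a full subsystem, $C \cap \Phi_F$ is a biclosed subset of $\Phi_F$, and $B \cap \Phi_F = \Phi'_+$. Splitting $\Phi_F$ into the positive roots $\Phi'_+$ of the kept components and the positive roots $\Phi''_+$ of the dropped components (which are mutually orthogonal), commensurability forces $C \cap \Phi'_+$ to be cofinite in $\Phi'_+$ and $C \cap \Phi''_+$ to be finite; hence $S \cap \Phi'_+ = \Phi'_+ \setminus (C\cap\Phi'_+)$ and $S\cap\Phi''_+ = C\cap\Phi''_+$ are each finite and biclosed in their component (a complement of a biclosed set is biclosed). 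Because distinct components are orthogonal, $S$ is a finite biclosed subset of $\Phi_F$, so \Cref{FiniteBiclosed} applied to $W_F$ yields a unique $w \in W_F$ with $N_{W_F}(w) = N(w)\cap\Phi_F = S$; then \Cref{lem:ActionFormula} gives $w\cdot B = B \oplus S = C$. The uniqueness of $w$ also reconfirms the freeness of the action.
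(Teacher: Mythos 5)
Your proposal is correct and takes essentially the same route as the paper's own proof: orbit-containment and freeness via \Cref{lem:ActionFormula}, confinement of the symmetric difference to $\Phi_F$ by the biclosed-plus-commensurable analysis on the $\tA_1$-subsystems $\pm\beta_0 + \ZZ\delta$, and realization of that difference as an inversion set via \Cref{FiniteBiclosed} applied to $W_F$, then concluding with \Cref{lem:ActionFormula} again. The only cosmetic differences are that you route the $\tA_1$ step through $C_\infty = B_\infty$ (\Cref{lem:CommensurableInfinity}) where the paper argues commensurability in each $\tA_1$ directly, and that you apply \Cref{FiniteBiclosed} to $W_F$ all at once (after checking orthogonality of components) where the paper works component by component and takes the product.
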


\begin{proof}
	This theorem contains three statements, which we check in turn.
	
	First, we show that, for $w \in W_F$, the biclosed sets  $w \cdot B(F, \Phi')$ and $B(F, \Phi')$ are commensurable. 
	This follows from the formula in \Cref{lem:ActionFormula}, which writes $w \cdot B(F, \Phi')$ as the symmetric difference of $B(F, \Phi')$ and a finite set.
	
	Next, we show that $W_F$ acts freely on $B(F, \Phi')$. Suppose that $w \in W_F$ with $w \cdot B(F,\Phi') = B(F, \Phi')$.
	Then \Cref{lem:ActionFormula} shows that $N(w) \cap \Phi_F = \emptyset$. 
	But $N(w) \cap \Phi_F$ is in bijection with the set of inversions of $w$ when considered as an element of the Coxeter group $W_F$, so $N(w) \cap \Phi_F = \emptyset$ only if $w = e$.
	
	Finally, let $B$ be a biclosed set commensurable with $B(F, \Phi')$. We want to show that $B = w \cdot B(F, \Phi')$ for some $w \in W_F$.
	
	Write $B = B(F, \Phi') \oplus X$. 
	We first claim that $X \subseteq \Phi_F$. 
	In other words, for $\beta \not \in \Phi_F$, we want to show that $B \cap (\RR \beta + \RR \delta) = B(F, \Phi') \cap (\RR \beta + \RR \delta)$. 
	The assumption that $\beta \not \in \Phi_F$ means that $B(F, \Phi') \cap (\RR \beta + \RR \delta)$ is either the set of positive roots of the form $\beta + k \delta$, or the set of positive roots of the form $- \beta + k \delta$. 
	We know that $B \cap (\RR \beta + \RR \delta)$ must be commensurable with this and must be biclosed in the $\tA_1$ subsystem $\pm \beta + \ZZ \delta$; the only way for this to happen is for $B \cap (\RR \beta + \RR \delta)$ to be $B(F, \Phi') \cap (\RR \beta + \RR \delta)$. 
	So we have shown that $X \subseteq \Phi_F$. 
	
	Let $\Phi_i$ be any indecomposable component of $\Phi_F$, let $W_i$ be the associated affine Coxeter group and let $X_i = X \cap \Phi_i$. 
	So $X_i$ is finite. If $\Phi_i$ is not one of the components of $\Phi'$, then $B \cap \Phi_i = X_i$; if $\Phi_i$ is one of the components of $\Phi'$, then $B \cap \Phi_i = \Phi_i \setminus X_i$. 
	Either way, we deduce that $X_i$ is a finite set which is  biclosed in $\Phi_i$.
	Therefore, $X_i = N_{W_i}(w_i)$ for some $w_i \in W_i$, where $N_{W_i}$ means to compute the inversion set within the Coxeter group $W_i$.
	Using the isomorphism $W_F \cong \prod_i W_i$, we see that $X = N_{W_F} \left( \prod_i w_i \right)$.
	Then, putting $w = \prod_i w_i$,  \Cref{lem:ActionFormula} shows that $B = w \cdot B(F, \Phi')$.
\end{proof}

Combining our results, we have the following.

\begin{theorem}\label{Parametrization}
	Let $W$ be an affine Coxeter group and $W_0$ the associated finite Coxeter group. Biclosed sets for $W$ are in bijection with triples $(F, \Phi',w)$, where $F$ is a face of the $W_0$-Coxeter fan, and $w\in W$ stabilizes $F$, and $\Phi'$ is a union of indecomposable subsystems of $\Phi_F$. Explicitly, if $f$ is a functional in the relative interior of $F$, then the set associated to the triple is
	\[ B(F, \Phi', w) = {\Big(} \left( \{\alpha+r\delta \mid r\in\ZZ,~\alpha\in\Phi_0,~ \langle f,\alpha\rangle<0\}\cap \Phi^+ \right) \sqcup \Phi'_+ {\Big)} \oplus {\Big(} N(w)\cap \Phi_F {\Big)}. \]
	The triples $(F_1, \Phi'_1, w_1)$, $(F_2,\Phi'_2,w_2)$ are in the same path component if and only if $F_1=F_2$ and $\Phi'_1=\Phi'_2$.
\end{theorem}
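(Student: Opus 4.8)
The plan is to assemble the preceding results into an explicit bijection, treating the parametrization $(F, \Phi', w)$ as built in two stages: the pair $(F, \Phi')$ records the ``asymptotic'' data $B_\infty$, while $w \in W_F$ records the finite discrepancy within a fixed commensurability class. First I would define the inverse map. Given a biclosed $B \subseteq \Phi^+$, form $B_\infty$, which is biclosed in $\Phi_0$ by \Cref{lem:BInfinityClosed}, and apply \Cref{thm:biclosedFinite} to write $B_\infty = B_0(F, \Phi'_0)$ for some pair $(F, \Phi'_0)$; let $\Phi'$ be the union of components of $\Phi_F$ corresponding to $\Phi'_0$. By \Cref{lem:BasicRepresentative} we have $B(F, \Phi')_\infty = B_0(F, \Phi'_0) = B_\infty$, so \Cref{lem:CommensurableInfinity} shows $B$ is commensurable to $B(F, \Phi')$. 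The preceding theorem---that $W_F$ acts freely and transitively on the commensurability class of $B(F, \Phi')$---then produces a unique $w \in W_F$ with $B = w \cdot B(F, \Phi')$, and \Cref{lem:ActionFormula} rewrites this as $B = B(F, \Phi') \oplus (N(w) \cap \Phi_F)$, which is precisely the displayed formula for $B(F, \Phi', w)$. This shows every biclosed set has the claimed form.

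For injectivity I would argue in the same two stages. The step I expect to be the crux is verifying that $(F, \Phi')$ is recovered uniquely from $B_\infty$, since \Cref{thm:biclosedFinite} is phrased as an existence statement and does not directly assert that $(F, \Phi'_0) \mapsto B_0(F, \Phi'_0)$ is injective. I would recover the data intrinsically from $C := B_0(F, \Phi'_0)$ as follows: since $\Phi'_0$ is a union of negation-stable components of $\Phi_0^F$ while every $\beta$ with $\langle f, \beta \rangle \neq 0$ satisfies $\beta \in C \iff -\beta \notin C$, the set $\Phi_0^F = \{\beta : \langle f, \beta \rangle = 0\}$ equals $\{\beta \in \Phi_0 : \beta \in C \iff -\beta \in C\}$, which is determined by $C$; then $\Phi'_0 = C \cap \Phi_0^F$, and the strictly negative roots $C \setminus \Phi_0^F$ record, for each reflecting hyperplane, which side $F$ lies on, pinning down the face $F$. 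Hence $(F, \Phi'_0)$, and therefore $(F, \Phi')$, is determined by $B_\infty$.

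With $(F, \Phi')$ fixed, uniqueness of $w$ is immediate from the freeness half of the preceding theorem: if $B(F, \Phi', w_1) = B(F, \Phi', w_2)$ then $w_1 \cdot B(F, \Phi') = w_2 \cdot B(F, \Phi')$, so $w_2^{-1} w_1$ stabilizes $B(F, \Phi')$ and hence equals $e$. Combining the two stages, if $B(F_1, \Phi'_1, w_1) = B(F_2, \Phi'_2, w_2)$, then applying $(\,\cdot\,)_\infty$ to both sides (commensurable sets share the same $B_\infty$) gives $B_0(F_1, \Phi'_{1,0}) = B_0(F_2, \Phi'_{2,0})$, whence $(F_1, \Phi'_1) = (F_2, \Phi'_2)$ by the recovery argument, and then $w_1 = w_2$. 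This establishes the bijection.

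Finally, the path-component claim follows directly: by \Cref{lem:CommensurableInfinity} two biclosed sets lie in the same path component (are commensurable) if and only if they have equal $B_\infty$, and by the recovery argument $B(F, \Phi', w)_\infty = B_0(F, \Phi'_0)$ both determines and is determined by $(F, \Phi')$. Hence $(F_1, \Phi'_1, w_1)$ and $(F_2, \Phi'_2, w_2)$ are in the same path component exactly when $F_1 = F_2$ and $\Phi'_1 = \Phi'_2$, independently of $w_1$ and $w_2$.
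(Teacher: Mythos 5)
Your proof is correct and is essentially the paper's own argument: the paper establishes \Cref{Parametrization} by ``combining our results,'' which is exactly the assembly you carry out of \Cref{lem:BInfinityClosed}, \Cref{thm:biclosedFinite}, \Cref{lem:CommensurableInfinity}, \Cref{lem:BasicRepresentative}, the free-and-transitive orbit theorem, and \Cref{lem:ActionFormula}. Your explicit recovery of $(F,\Phi'_0)$ from $B_0(F,\Phi'_0)$ (via the negation-symmetry criterion identifying $\Phi_0^F$ inside $C$) correctly supplies the injectivity of the parametrization, a detail the paper leaves implicit in its citation of {\DJ}okovi\'{c}--Check--H\'{e}e.
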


This is Theorem~\ref{thm:Main}, with some additional details added.

\begin{remark}
	For a general affine Coxeter group, the limit weak order of Lam and Pylyav\-skyy~\cite{LP} corresponds to triples $(F, \emptyset, w)$. This was shown by Weijia Wang in \cite{Wang2018}.
\end{remark}

We can use our parametrization to describe the poset of biclosed sets in a commensurability class in terms of weak order on the face subgroup.  

\begin{corollary}
	Let $F$ and $\Phi'$ be as above. 
	Let the components of $\Phi_F$ be $\Phi_1 \sqcup \Phi_2 \sqcup \cdots \sqcup \Phi_r$, with corresponding Coxeter groups $W_1$, $W_2$, \dots, $W_r$. 
	Let $\Phi' = \bigsqcup_{i \in I} \Phi_i$ for some index set $I \subseteq [r]$.
	The partially ordered set of biclosed sets which are commensurable with $B(F, \Phi')$ is isomorphic to $\prod_{i \in I} W_i^{\op} \times \prod_{i \in [r] \setminus I} W_i$.
	Here $W_i$ is the weak order on the Coxeter group $W_i$ and $W_i^{\op}$ is the opposite of this weak order.
\end{corollary}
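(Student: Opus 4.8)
The plan is to leverage the theorem immediately preceding this corollary, which establishes that $w \mapsto w \cdot B(F, \Phi')$ is a bijection from $W_F$ onto the commensurability class of $B(F, \Phi')$, together with the isomorphism $W_F \cong \prod_{i=1}^r W_i$. The remaining task is purely order-theoretic: to translate the containment order on biclosed sets into the product weak order on $\prod_i W_i$. First I would apply \Cref{lem:ActionFormula} to write $w \cdot B(F, \Phi') = B(F, \Phi') \oplus (N(w) \cap \Phi_F)$. Since $N(w) \subseteq \Phi^+$, we have $N(w) \cap \Phi_F \subseteq \Phi_F \cap \Phi^+$, so every member of the commensurability class agrees with $B(F, \Phi')$ outside of $\Phi_F \cap \Phi^+$. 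Consequently, for $w, v \in W_F$, the containment $w \cdot B(F, \Phi') \subseteq v \cdot B(F, \Phi')$ is equivalent to the same containment after intersecting both sides with $\Phi_F \cap \Phi^+$.

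Next I would decompose $\Phi_F \cap \Phi^+ = \bigsqcup_{i=1}^r (\Phi_i \cap \Phi^+)$ and, writing $w = \prod_i w_i$ under $W_F \cong \prod_i W_i$, use the bijection from the previous theorem to identify $N(w) \cap \Phi_i = N_{W_i}(w_i)$. Observing that $B(F, \Phi') \cap (\Phi_i \cap \Phi^+)$ equals $\Phi_i \cap \Phi^+$ when $i \in I$ and $\emptyset$ when $i \notin I$, taking the symmetric difference componentwise gives, on component $i$,
\[ \big(w \cdot B(F, \Phi')\big) \cap \Phi_i = \begin{cases} (\Phi_i \cap \Phi^+) \setminus N_{W_i}(w_i) & i \in I, \\ N_{W_i}(w_i) & i \notin I. \end{cases} \]
Because $\Phi_F \cap \Phi^+$ is the disjoint union of the $\Phi_i \cap \Phi^+$, the reduced containment from the first paragraph holds if and only if it holds on each component separately. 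For $i \notin I$ this reads $N_{W_i}(w_i) \subseteq N_{W_i}(v_i)$, i.e. $w_i \leq v_i$ in the weak order on $W_i$; for $i \in I$ it reads $(\Phi_i \cap \Phi^+) \setminus N_{W_i}(w_i) \subseteq (\Phi_i \cap \Phi^+) \setminus N_{W_i}(v_i)$, which is equivalent to $N_{W_i}(v_i) \subseteq N_{W_i}(w_i)$, i.e. $w_i \leq v_i$ in $W_i^{\op}$. Assembling these componentwise comparisons yields precisely the product order on $\prod_{i \in I} W_i^{\op} \times \prod_{i \in [r] \setminus I} W_i$, so the bijection of the preceding theorem is an isomorphism of posets.

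The individual computations are routine; the only point requiring genuine care is the order reversal on the components indexed by $I$. This reversal is exactly the effect of the clause $\Phi' \cap \Phi^+$ in the definition of $B(F, \Phi')$: on those components the biclosed set is the \emph{complement} within $\Phi_i \cap \Phi^+$ of a finite inversion set rather than the inversion set itself, so enlarging the biclosed set corresponds to shrinking the associated inversion set, which inverts weak order. I would be careful to note explicitly that $N_{W_i}(w_i)$ is finite and biclosed in $\Phi_i$, so that the comparison on each component is genuinely the weak order on $W_i$ (via \Cref{FiniteBiclosed} applied inside $W_i$) and not merely a comparison of arbitrary subsets.
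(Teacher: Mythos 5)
Your proposal is correct and takes essentially the same route as the paper's own proof: both use \Cref{lem:ActionFormula} to write each commensurable biclosed set as $B(F,\Phi') \oplus (N(w)\cap\Phi_F)$, decompose along the components $\Phi_i$, and observe that containment of inversion sets gives weak order on the factors with $i \notin I$ while complementation within $\Phi_i \cap \Phi^+$ reverses the order on the factors with $i \in I$. Your version merely spells out more carefully the (correct) reduction of containment to the intersection with $\Phi_F \cap \Phi^+$, which the paper leaves implicit.
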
 

\begin{proof}
	From \Cref{lem:ActionFormula}, the biclosed sets commensurable with $B(F, \Phi')$ are of the form 
	\[B(F, \Phi') \oplus X,\] 
	where $X \subset \Phi_F$ corresponds to the inversions of $w \in W_F$.
	Writing $w = (w_1, w_2, \ldots, w_r)$ for $w_i \in W_i$, this is $B(F, \Phi') \setminus \bigcup_{i \in I} N_{W_i}(w_i) \cup \bigcup_{i \in [r] \setminus I} N_{W_i}(w_i)$, where $N_{W_i}(w_i)$ is the set of inversions of $w_i$ as an element of $W_i$.
	
	Containment of inversion sets in $W_i$ is weak order on $W_i$. For the indices $i$ which are in $I$, making $N_{W_i}(w_i)$ larger will make $B(F, \Phi')$ smaller; for the indices $i$ which are not in $I$, making $N_{W_i}(w_i)$ larger will make $B(F, \Phi')$ larger. This is why the $W_i$ factors are reversed for $i \in I$.
\end{proof}

\begin{corollary} \label{cor:PathComponent}
	Let $X$ and $Y$ be biclosed sets in $\Phi^+$. Then $X$ and $Y$ are commensurable if and only if there is a sequence $X = Z_0$, $Z_1$, $Z_2$, \dots, $Z_{\ell} = Y$ of biclosed sets with $Z_i$ and $Z_{i+1}$ differing by a single element.
\end{corollary}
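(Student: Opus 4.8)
The plan is to reduce the statement to the connectivity of the Hasse diagram of the weak order on the parahoric subgroup $W_F$. First I would use the classification to place $X$ and $Y$ in a common commensurability class: since $X$ and $Y$ are commensurable, \Cref{lem:CommensurableInfinity} gives $X_\infty = Y_\infty$, and by \Cref{Parametrization} (recall that the path components are exactly the commensurability classes) this common value singles out one pair $(F, \Phi')$ with both $X$ and $Y$ in the $W_F$-orbit of $B(F, \Phi')$. Thus I may write $X = v \cdot B(F, \Phi')$ and $Y = w \cdot B(F, \Phi')$ for some $v, w \in W_F$.

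The main local observation is that a single covering relation in the weak order on $W_F$ moves the associated biclosed set by exactly one root. By \Cref{lem:ActionFormula}, $u \cdot B(F, \Phi') = B(F, \Phi') \oplus (N(u) \cap \Phi_F)$, and $N(u) \cap \Phi_F$ is in bijection with the inversion set of $u$ regarded as an element of the Coxeter group $W_F$. If $u \lessdot u'$ is a cover in the weak order on $W_F$, then $\ell(u') = \ell(u) + 1$ and $N(u) \cap \Phi_F \subseteq N(u') \cap \Phi_F$, so these two sets differ by a single root $\beta \in \Phi_F$. Taking symmetric differences with $B(F, \Phi')$, I conclude that $u \cdot B(F, \Phi')$ and $u' \cdot B(F, \Phi')$ differ by the single element $\beta$.

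It then remains to connect $v$ to $w$ in the Hasse diagram of $W_F$. The weak order on $W_F$ has minimum element $e$, and any $u \in W_F$ admits a saturated descending chain to $e$ of length $\ell(u)$, obtained by successively removing a simple reflection; hence the Hasse diagram of $W_F$ is connected. Concatenating a descending chain from $v$ to $e$ with an ascending chain from $e$ to $w$ yields a walk $v = u_0, u_1, \ldots, u_\ell = w$ in which consecutive terms are related by a cover. Setting $Z_i = u_i \cdot B(F, \Phi')$ then produces the required sequence: $Z_0 = X$, $Z_\ell = Y$, and by the previous paragraph $Z_i$ and $Z_{i+1}$ differ by a single element for each $i$.

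The only point demanding care is the bookkeeping in the second paragraph, namely that a weak-order cover in $W_F$ changes $N(\cdot) \cap \Phi_F$ — and therefore the associated biclosed set — by exactly one root rather than none or several. This rests on the identification of $N(u) \cap \Phi_F$ with the $W_F$-inversions of $u$, together with the standard fact that weak-order covers increase the inversion count by one; granting these, the rest of the argument is routine.
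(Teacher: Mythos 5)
Your proof is correct and follows essentially the same route as the paper: the paper also writes $X = B(F,\Phi',u)$ and $Y = B(F,\Phi',v)$, concatenates a saturated descending chain from $u$ to $e$ with a saturated ascending chain from $e$ to $v$ in the weak order on $W_F$, and takes the corresponding biclosed sets $B(F,\Phi',u_i)$ as the $Z_i$. The only difference is that you spell out explicitly, via \Cref{lem:ActionFormula} and the identification of $N(u)\cap\Phi_F$ with $W_F$-inversions, why each cover changes the biclosed set by exactly one root, a step the paper leaves implicit.
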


\begin{proof}
	The reverse implication is immediate. To prove the forwards direction, let $X = B(F, \Phi',\allowbreak u)$ and let $Y = B(F, \Phi', v)$. Take chains $u = u_p > u_{p-1} > \cdots > u_1 > u_0  = e$ and $e = v_0 < v_1 < \cdots < v_{q-1} < v_q = v$ in $W_F$. 
	Then $B(F, \Phi', u_q)$, $B(F, \Phi', u_{q-1} )$, \dots, $B(F, \Phi', u_1)$, $B(F, \Phi', u_0)=B(F, \Phi') = B(F, \Phi', v_0)$, $B(F, \Phi', v_1)$, \dots, $B(F, \Phi', v_{q-1})$, $B(F, \Phi', v_q)$ is a sequence of the sort we seek.
\end{proof}

\begin{remark}\label{rem:pathcomponent}
	\Cref{cor:PathComponent} implies that commensurable biclosed sets are in the same connected component of the Hasse diagram of the poset of biclosed sets. In fact, commensurability classes are exactly the connected components of the Hasse diagram.
	To see this, we would need to know that whenever $X \subset Y$ is a cover relation in the poset of biclosed sets, then $X$ and $Y$ should be commensurable. An even stronger statement holds: if $X\subset Y$ is a cover relation, then $|Y\setminus X|=1$. This has been shown for affine Coxeter groups by Matthew Dyer, in an unpublished work \cite{Dyerpreprint}. 
	We give a different proof in \cite{Barkley2023}.
\end{remark}

\section{Separable and weakly separable sets} \label{sec:separable}

In this section, we determine which biclosed sets are separable and weakly separable. In part, we study these other notions in order to clarify the relationship between biclosure and other concepts in the literature. Also, in the course of this study, we prove \Cref{thm:rank3separable}, which will play a key role in 
our following paper~\cite{Barkley2023}.

\subsection{Separability and weak separability} Let $V$ be a real vector space and let $X$ be a set of nonzero vectors in $V$.
Let $B$ be a subset of $X$.
If $X$ is finite, then the hyperplane separation theorem states that the following are equivalent:
\begin{enumerate}
	\item There is a linear functional $\theta \in V^{\ast}$ such that $\langle \theta, \beta \rangle < 0$ for $\beta \in B$ and $\langle \theta, \beta \rangle > 0$ for $\beta \in X \setminus B$. \label{separableHyperplane}
	\item There do \textbf{not} exist $\beta_1$, $\beta_2$, \dots, $\beta_j \in B$ and $\gamma_1$, $\gamma_2$, \dots, $\gamma_k \in X \setminus B$ and positive scalars $b_1$, $b_2$, \dots, $b_j$, $c_1$, $c_2$, \dots, $c_k$ such that $\sum b_i \beta_i = \sum c_i \gamma_i$; here we permit one of $j$ and $k$ to be $0$ but not $j=k=0$.
	\label{separableCircuit}
\end{enumerate}

If $X$ is infinite, these conditions no longer coincide, though Condition~\ref{separableHyperplane} still clearly implies Condition~\ref{separableCircuit}. We define $B$ to be \newword{separable in $X$} if Condition~\ref{separableHyperplane} holds and \newword{weakly separable in $X$} if  Condition~\ref{separableCircuit} holds. We will omit the ``in $X$'' when $X$ is clear from context.

Here is an alternate description of weak separability which resembles the separating hyperplane condition: For a vector $\vec{x}=(x_1, x_2, \ldots, x_k) \in \RR^k$, define $\vec{x} \succ 0$ if there is an index $r$ such that $x_1 = x_2 = \cdots = x_{r-1} =0$ and $x_r>0$; define $\vec{x} \prec 0$ analogously. 

\begin{lemma}
	The set $B \subset X$ is weakly separable if and only if there exists a linear map $\theta : V \to \RR^k$ such that $\theta(\beta) \prec 0$ for $\beta \in B$ and $\theta(\beta) \succ 0$ for $\beta \in X \setminus B$. If we want, we may further assume that $\theta$ is an isomorphism of vector spaces.
\end{lemma}

\begin{proof}
	First, let $\theta : V \to \RR^k$ be any linear map which is nonzero on $X$. It is easy to check that $\{ \beta : \theta(\beta) \prec 0 \}$ is a weakly separable set.
	
	Now, suppose that $B$ is weakly separable in $X$. Fix an auxilliary inner product $( \ , \ )$ on $V$. 
	For each $\beta \in X$, let $H_{\beta}$ be the closed hemisphere
	\[ H_{\beta} = {\big\{} \theta \in V : (\theta, \theta)=1 \ \text{and} \ (\theta, \beta) \leq 0 \ \text{if}\  \beta \in B,\ (\theta, \beta) \geq 0 \ \text{if}\  \beta \in X \setminus B . {\big\}} \]
	For any finite subset $Y$ of $X$, the set $B \cap Y$ is weakly separable, hence separable, in $Y$, which means that $\bigcap_{\beta \in Y} H_{\beta}$ is nonempty. So the $H_{\beta}$ are compact subsets of the unit sphere in $V$ for which any finite subset has nonempty intersection, and therefore $\bigcap_{\beta \in X} H_{\beta}$ is nonempty. Let $\theta_1 \in\bigcap_{\beta \in X} H_{\beta}$. We will take $\theta_1$ to be the first component of the map $\theta$.
	
	If $(\theta_1, \beta)$ is nonzero for all $\beta \in X$, we are done. Otherwise, we can reduce to considering the set $B \cap \theta_1^{\perp}$ inside $X \cap \theta_1^{\perp}$. Since the vector space $\theta_1^{\perp}$ is lower dimensional than $V$, we are done by induction on dimension.
	
	Finally, we note that our proof gives a linearly independent (even orthonormal!) set $(\theta_1, \theta_2,\allowbreak \ldots, \theta_k)$ of vectors in $V$, and it is harmless to complete it to a basis, so we may assume that $\theta : V \to \RR^k$ is an isomorphism.
\end{proof}

\subsection{Separable biclosed sets}
We now specialize to the setting of the rest of the paper, considering separability and weak separability of biclosed sets in $\Phi^+$, for $\Phi^+$ a root system of affine type. 
\begin{theorem} \label{thm:separableCriterion}
	The biclosed set $B(F, \Phi', w)$ is separable if and only if either $F$ is either a maximal face of the $W_0$-Coxeter fan, or else $F = \{ 0 \}$; in the latter case, $B$ is either of the form $N(w)$ or $\Phi^+ \setminus N(w)$. 
\end{theorem}

\begin{proof}
	Recall that separable sets are induced by vectors $\theta$ in $V^{\ast}$ such that $\langle \theta, \beta \rangle \neq 0$ for all $\beta \in \Phi^+$. The points $\theta$ in $V^{\ast}$ which do not lie on any of the hyperplanes $\beta^{\perp}$ fall into three classes:
	\begin{enumerate}
		\item Points in the \newword{Tits cone} (meaning that $\langle \theta, \delta \rangle > 0$), which also lie in the complement of the $W$-Coxeter hyperplane arrangement.
		\item Points in the \newword{negative Tits cone} (meaning that $\langle \theta, \delta \rangle < 0$), which also lie in the complement of the $W$-Coxeter hyperplane arrangement.
		\item  Points in the boundary of the Tits cone (so $\langle \theta, \delta \rangle = 0$) which also lie in the complement of the $W_0$-Coxeter hyperplane arrangement.
	\end{enumerate}
	
	In the first case, we know that the regions of the complement of the $W$-Coxeter hyperplane arrangement in the Tits cone are indexed by $w \in W$, and the corresponding biclosed set is the inversion set $N(w) = B(\{ 0 \},\ \emptyset,\ w)$.
	In the second case, each region in the negative Tits cone is the negation of a region in the positive Tits cone, so the separable sets coming from $\theta$ in the negative Tits cone are of the form $\Phi^+ \setminus N(w) = B(\{ 0 \},\ \Phi^+,\ w)$.
	In the third case, let $F$ be the corresponding face of the $W_0$-Coxeter arrangement. The condition that $F$ does not lie in any $\beta^{\perp}$ means that $F$ is maximal. In this case, $W_F$ is the trivial group, so the data of $\Phi'$ and $w$ is trivial, and $B$ is the unique biclosed set for this $F$.
\end{proof}

\subsection{Weakly separable biclosed sets}
Our next result is 
\begin{theorem} \label{thm:weaklySeparableCriterion}
	The biclosed set $B(F, \Phi', w)$ is weakly separable if and only if either $\Phi' = \emptyset$ or $\Phi' = \Phi_F$. 
\end{theorem}

\begin{remark}  \label{InfiniteWords}
	If $\Phi' = \emptyset$ then $B$ is the inversions of a finite or infinite reduced word in the sense of~\cite{LP} and $B$ is biclosed in $\Phi^+ \cup \{\delta \}$. If $\Phi' = \Phi_F$ then  $\Phi^+ \setminus B$ is the inversions of a finite or infinite reduced word and  $B \cup \{ \delta \}$ is biclosed in $\Phi^+ \cup \{\delta \}$.
	If $F$ is a maximal face of the $W_0$-Coxeter arrangement, then $\Phi_F = \emptyset$, so we have $\Phi' = \emptyset = \Phi_F$ in this case and both $B$ and  $\Phi^+ \setminus B$ are the inversions of finite or infinite reduced words. \end{remark}

\begin{remark}
	Our description of weakly separable biclosed sets can largely be deduced from results of Wang, and of Hohlweg and Labb\'{e}, on inversions of infinite reduced words.
	(Throughout this remark, we use ``infinite reduced word'' as a shorthand for ``finite or infinite reduced word''.)
	The description of which biclosed sets correspond to infinite reduced words, as in Remark~\ref{InfiniteWords}, can also be found in Wang's paper \cite[Theorem~3.12]{Wang2018}; this relies on Dyer's unpublished preprint~\cite{Dyerpreprint} in the form of \cite[Theorem~1.13]{Wang2018}.
	Hohlweg and Labb\'{e}, \cite[Proposition~4.2(i)]{Hohlweg2016} shows that inversion sets of infinite reduced words are weakly separable.
	Hohlweg and Labb\'{e}'s result relies on a result of Cellini and Papi~\cite[Theorem~3.12]{Cellini1998} which relates infinite reduced words to a version of biclosed sets in an enlarged root system containing the imaginary root $\delta$.
\end{remark}

Theorems~\ref{thm:separableCriterion} and~\ref{thm:weaklySeparableCriterion} together prove Theorem~\ref{thm:SeparableMain}.

\begin{proof}[Proof of Theorem~\ref{thm:weaklySeparableCriterion}]
	First, suppose that $\Phi'$ is equal to neither $\emptyset$ nor $\Phi_F$. Let $P$ be a indecomposable component of $\Phi'$ and let $Q$ be a indecomposable component of $\Phi_F \setminus \Phi'$. Then $\delta$ is in the positive span of both $P$ and $Q$, so $B$ is not weakly separable.
	
	Now, suppose that $\Phi' = \emptyset$. In this case, we must show that $B(F, \emptyset, w)$ is separable.
	First, let $\theta_1$ be in the relative interior of $F$. Then $\langle \theta_1, \beta \rangle < 0$ for $\beta \in B(F, \emptyset, e)$, and  $\langle \theta_1, \beta \rangle = 0$ for $\beta \in \Phi_F$, and  $\langle \theta_1, \beta \rangle > 0$ for $\beta \in \Phi^+ \setminus (B(F, \emptyset, e) \cup \Phi_F)$. 
	It remains to find $\theta_2$ such that, when restricted to $\Phi^+_F$, $\langle \theta_2,\ \rangle$ is negative precisely for the inversions of $w$.
	
	Let $\Phi_0^1$, $\Phi_0^2$, \dots, $\Phi_0^r$ be the components of $\Phi_0^F$ and let $\Phi_i$ be the preimage of $\Phi_0^i$ in $\Phi$, so that the $\Phi_i$ are the components of $\Phi_F$. 
	Each of the $\Phi_i$'s is an affine root system, all of which share the same imaginary root $\delta$. 
	If we write $V_i = \text{Span}(\Phi_i)$ and $V_F = \text{Span}(\Phi_F)$, then the kernel of $\bigoplus V_i \longrightarrow V_F$ is spanned by the differences $\delta_i - \delta_j$, where $\delta_i$ is the copy of $\delta$ in the $i$-th summand. 
	We have $W_F = \prod W_i$, where the $W_i$ are the affine groups associated with the $\Phi_i$; let $w = (w_1, w_2, \ldots, w_r)$ in this product decomposition.
	For each $w_i$, we can find a $\theta_{2i} \in V_i^{\ast}$ such that $\langle \theta_{2i}, \ \rangle$ is negative on $N(w_i)$; moreover, $\theta_{2i}$ is in the Tits cone, so $\langle \theta_{2i}, \delta_i \rangle > 0$. Normalize the $\theta_{2i}$'s so that $\langle \theta_{2i}, \delta_i \rangle = 1$. Then $\sum \langle \theta_{2i},\ \rangle$ vanishes on the kernel of $\bigoplus V_i \longrightarrow V_F$, so there is some $\theta_2 \in V_F^{\ast}$ which restricts to $\theta_{2i}$ on each $V_i$. Extending this $\theta_2$ to a linear functional on all of $V$, we have the functional we seek. 
	
	We have now verified, using the map $(\theta_1, \theta_2) : V \to \RR^2$, that $B(F, \emptyset, w)$ is weakly separable. 
	The verification for $B(F, \Phi', w)$ is similar.
\end{proof}

\begin{cor}\label{thm:rank3separable}
	If $\Phi$ is an affine root system of rank $\leq 3$, then every biclosed set is weakly separable.
\end{cor}

\begin{proof}
	In this case, $\Phi_0^F$ must be either the whole Coxeter diagram $\Phi_0$, or a single vertex of it, or the empty set. In any of these cases, $\Phi_0^F$ has at most one component, so we always have either $\Phi'=\emptyset$ or $\Phi' = \Phi_F$ (or both).
\end{proof}

\section{Combinatorial descriptions}\label{sec:combinatorial}
In this section we apply our previous results to give a combinatorial description of 
the commensurability classes and the biclosed sets for types $A_{\infty}$, $\tAn$, $\tC_n$, $\tB_n$ and $\tD_n$. Additionally, we give combinatorial proofs of the lattice property for types $A_{\infty}$, $\tAn$ and $ \tC_n$.

The results in this section, though not their proofs, can be understood independently of Sections~\ref{sec:AffineModels} and \ref{sec:separable}.

We will use the combinatorial descriptions of the classical affine Coxeter groups that are described in the book by Bj\"orner and Brenti \cite{Bjorner2005}. 
This source does not emphasize root systems, so we will include descriptions of the root systems as well.


\subsection{The Coxeter group \texorpdfstring{$A_\infty$}{A\_infinity}}
We begin with a Coxeter group of infinite rank, the (two-sided) infinite symmetric group $A_\infty$.

\begin{definition}
	The Coxeter group $A_\infty$ is the group of permutations $\pi: \ZZ\rightarrow \ZZ$ which fix all but finitely many points. The simple generators are the simple transpositions $s_i = (i, i+1)$ for $i\in \ZZ$. 
\end{definition}

The Coxeter diagram of $A_{\infty}$ is
\begin{figure}[H]
	\centering
	\begin{dynkinDiagram}[scale=1.5,indefinite edge ratio = 2, labels={,-3,-2,-1,...,3,}]{A}{*.*******.*}
		\fill[white] (root 1) circle (.24cm) (root 9) circle (.24cm);
	\end{dynkinDiagram}.
\end{figure}

The reflections of $A_{\infty}$ are the transpositions $(i,j)$ for $i<j$. 
Letting $\RR^{\infty}$ be the vector space with basis $e_i$ for $i \in \ZZ$, we can take $V$ to be the span of the vectors $\beta_{ij} \coloneqq  e_j - e_i$. The positive roots are the $\beta_{ij}$ for $i<j$, and the simple roots are $\alpha_i\coloneqq  \beta_{i,i+1}$ for $i\in \ZZ$.

The rank 2 subsystems of $A_{\infty}$ are:
\[ 	\begin{array}{ll}
	\beta_{ij} \rightarrow \beta_{ik} \rightarrow \beta_{jk} & \text{for $i<j<k$} \\
	\beta_{ij} \rightarrow \beta_{k \ell} & \text{for $i,j,k,\ell$ pairwise distinct.}\\
	
\end{array} \]

\begin{corollary}
	Let $\overline{\ \cdot\ }$ be the closure operator for $A_\infty$. Then $\overline{\ \cdot\ }$ is the weakest closure operator on subsets of $\Phi^+$ which satisfies 
	\[ \overline{\{ \beta_{ij}, \beta_{jk} \}} = \{\beta_{ij}, \beta_{ik}, \beta_{jk} \} \]
	for all $i<j<k$. 
\end{corollary}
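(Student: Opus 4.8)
The plan is to prove the two inequalities that pin down ``weakest'': first that $\overline{\ \cdot\ }$ itself satisfies the displayed relation, and then that $\overline{\ \cdot\ }$ is dominated by every closure operator satisfying that relation. Throughout I would use the description of rank $2$ subsystems given just above: a set $C \subseteq \Phi^+$ is closed in $A_\infty$ exactly when $\beta_{ij}, \beta_{jk} \in C$ implies $\beta_{ik} \in C$ for all $i<j<k$. Indeed, the $A_1 \times A_1$ subsystems impose no condition, and in the $A_2$ subsystem $\beta_{ij} \rightarrow \beta_{ik} \rightarrow \beta_{jk}$ the only root lying strictly between two of the others is $\beta_{ik}$, which is forced by $\beta_{ij}$ and $\beta_{jk}$.

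First I would establish the relation for $\overline{\ \cdot\ }$. The inclusion $\{\beta_{ij}, \beta_{ik}, \beta_{jk}\} \subseteq \overline{\{\beta_{ij}, \beta_{jk}\}}$ is immediate, since any closed set containing $\beta_{ij}$ and $\beta_{jk}$ must contain $\beta_{ik} = \beta_{ij} + \beta_{jk}$. For the reverse inclusion I would check that $\{\beta_{ij}, \beta_{ik}, \beta_{jk}\}$ is already closed: the only nonvacuous closure condition among its three pairs is the one producing $\beta_{ik}$, which is present, because neither $\RR_{>0}\beta_{ij} + \RR_{>0}\beta_{ik}$ nor $\RR_{>0}\beta_{ik} + \RR_{>0}\beta_{jk}$ contains a further root (a positive combination of two roots sharing an index with the same sign has three nonzero coordinates, hence is not a root). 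A closed set containing $\{\beta_{ij}, \beta_{jk}\}$ therefore contains the closure, giving equality.

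The heart of the argument is minimality. Let $\mathrm{cl}$ be any closure operator on subsets of $\Phi^+$ with $\mathrm{cl}(\{\beta_{ij}, \beta_{jk}\}) = \{\beta_{ij}, \beta_{ik}, \beta_{jk}\}$ for all $i<j<k$; I want $\overline{A} \subseteq \mathrm{cl}(A)$ for every $A$. It suffices to show $\mathrm{cl}(A)$ is closed in the $A_\infty$ sense, for then $\mathrm{cl}(A)$ is, by extensivity, a closed set containing $A$, hence contains the smallest such set $\overline{A}$. So suppose $\beta_{ij}, \beta_{jk} \in \mathrm{cl}(A)$ with $i<j<k$. By monotonicity $\mathrm{cl}(\{\beta_{ij}, \beta_{jk}\}) \subseteq \mathrm{cl}(\mathrm{cl}(A))$, and by idempotence the right-hand side is $\mathrm{cl}(A)$; since $\beta_{ik}$ lies in the left-hand side by hypothesis, $\beta_{ik} \in \mathrm{cl}(A)$. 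Thus $\mathrm{cl}(A)$ is closed, and minimality follows.

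The only step demanding care is verifying closedness of $\{\beta_{ij}, \beta_{ik}, \beta_{jk}\}$ in the first paragraph, where one must rule out unintended closure relations by checking which positive combinations of these roots are again roots; the rest follows formally from extensivity, monotonicity, and idempotence. I would not anticipate a genuine obstacle here, since the substance of the statement is simply the translation between the abstract closure relation and the concrete $A_\infty$ closure condition.
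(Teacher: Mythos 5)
Your proof is correct and takes essentially the same approach as the paper, which treats the corollary as immediate from the preceding classification of rank $2$ subsystems: the $A_1 \times A_1$ systems impose no closure condition and the $A_2$ systems impose exactly the displayed relation, so $\overline{\ \cdot\ }$ is the closure operator generated by those relations (equivalently, transitive closure of the associated relation on $\ZZ$, as the paper notes right after the corollary). Your explicit verification that $\{\beta_{ij}, \beta_{ik}, \beta_{jk}\}$ is closed, together with the formal minimality argument via extensivity, monotonicity, and idempotence, simply spells out the routine details the paper leaves unstated.
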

Thus, if we think of a set $U$ of positive roots as defining a relation $R$ on $\ZZ$, where $i R j$ if $\beta_{ij} \in U$, then $\overline{U}$ corresponds to taking the transitive closure of this relation.

We can use this idea to show that the poset of biclosed sets is a lattice.
We start with a lemma.

\begin{lemma} \label{lem:KeyClosureLemma}
	If $U\subseteq \Phi^+$ is a coclosed set, then the closure $\overline{U}$ is biclosed.
	If $K \subseteq \Phi^+$ is a closed set, then the interior $K^{\circ}$ is closed.
\end{lemma}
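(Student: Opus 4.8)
The plan is to treat the two statements as a dual pair: the second will follow from the first by complementation, so the real work is to show that the closure of a coclosed set is biclosed. Note first that $\overline{U}$ is closed by construction (an intersection of closed sets is closed), so the only thing to verify is that $\overline{U}$ is \emph{coclosed}. For this I would work entirely through the description of $\overline{\,\cdot\,}$ as transitive closure supplied by the corollary above: for $i<k$, the root $\beta_{ik}$ lies in $\overline{U}$ exactly when there is a chain $i = i_0 < i_1 < \cdots < i_m = k$ with $\beta_{i_\ell i_{\ell+1}} \in U$ for every $\ell$.

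Suppose, for contradiction, that $\overline{U}$ is not coclosed. Since the only rank-$2$ subsystems with a nontrivial closure condition are the $A_2$-systems $\beta_{ij} \to \beta_{ik} \to \beta_{jk}$, a failure of coclosedness means there are $i<j<k$ with $\beta_{ik} \in \overline{U}$ but $\beta_{ij}, \beta_{jk} \notin \overline{U}$. Fix a chain $i = i_0 < \cdots < i_m = k$ witnessing $\beta_{ik} \in \overline{U}$. Since $i<j<k$, the index $j$ either appears on the chain or falls strictly between two consecutive entries. In the first case $j = i_\ell$ for some $0 < \ell < m$, and the initial segment $i_0 < \cdots < i_\ell = j$ already witnesses $\beta_{ij} \in \overline{U}$, a contradiction. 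In the second case $i_\ell < j < i_{\ell+1}$ for some $\ell$, and I would apply coclosedness of $U$ to the triple $i_\ell < j < i_{\ell+1}$: from $\beta_{i_\ell i_{\ell+1}} \in U$ we obtain $\beta_{i_\ell j} \in U$ or $\beta_{j i_{\ell+1}} \in U$. In the former case the chain $i_0 < \cdots < i_\ell < j$ gives $\beta_{ij} \in \overline{U}$; in the latter the tail $j < i_{\ell+1} < \cdots < i_m = k$ gives $\beta_{jk} \in \overline{U}$. Either way this contradicts $\beta_{ij}, \beta_{jk} \notin \overline{U}$, so $\overline{U}$ is coclosed, hence biclosed.

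For the second statement I would invoke the complementation duality between closure and interior, namely $A^{\circ} = \Phi^+ \setminus \overline{\Phi^+ \setminus A}$ for any $A$; this is immediate from the definitions, using that $C$ is coclosed if and only if $\Phi^+ \setminus C$ is closed. If $K$ is closed then $\Phi^+ \setminus K$ is coclosed, so the first statement shows $\overline{\Phi^+ \setminus K}$ is biclosed, in particular coclosed, and therefore its complement $K^{\circ}$ is closed.

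I expect the load-bearing idea to be the rerouting step in the second paragraph: coclosedness of $U$ is precisely the hypothesis that lets a transitive-closure chain be diverted through an arbitrary intermediate index $j$, which is what forces $\beta_{ij}$ or $\beta_{jk}$ into $\overline{U}$. The only mild bookkeeping is the degenerate chain lengths ($\ell = 0$, $\ell = m-1$, or a one-step chain $m=1$), where ``transitive closure'' degenerates to membership in $U$ itself; these cases are routine and cause no difficulty.
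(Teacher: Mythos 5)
Your proof is correct and takes essentially the same route as the paper's: both use the chain (transitive-closure) characterization of $\overline{U}$ from the corollary, divert a witnessing chain through the intermediate index $j$ via the coclosedness of $U$ applied to the $A_2$-system $\beta_{i_\ell j} \to \beta_{i_\ell i_{\ell+1}} \to \beta_{j i_{\ell+1}}$, and then deduce the interior statement from the complementation identity $K^{\circ} = \Phi^+ \setminus \overline{\Phi^+ \setminus K}$. The only cosmetic difference is that you argue by contradiction where the paper argues directly.
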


For legibility, in this proof we label the elements of $U$ by the corresponding transpositions $(i,j)$ in $T$, so we write $(i,j)$ instead of $\beta_{ij}$.

\begin{proof}
	Let $U$ be a coclosed set. By the corollary, $\overline{U}$ is the set of all pairs $(a,c)$ with $a<c$ such that there exists a sequence
	\[ a = b_0 < b_1 < \ldots < b_{r-1} < b_r = c\]
	\[ (b_{i},b_{i+1}) \in U \text{ for all $i$} .\]
	Let $a<b<c$ such that $(a,c)\in \overline{U}$. To show that $\overline{U}$ is coclosed, we must show that either $(a,b)\in \overline{U}$ or $(b,c)\in \overline{U}$.
	Since $(a,c)\in \overline{U}$, we have a chain $b_0 < \ldots < b_r$ as above. If $b=b_i$ for some $i$, then both $(a,b)$ and $(b,c)$ are in $\overline{U}$ and we are done. Otherwise there is some $i$ for which $b_i < b < b_{i+1}$. However, we know that $U$ is a coclosed set and that 
	\[ (b_i, b) \rightarrow (b_i, b_{i+1}) \rightarrow (b, b_{i+1}) \]
	is a rank 2 system. Since $(b_i,b_{i+1})$ is in $U$, it follows that either $(b_i,b) \in U$ or that $(b,b_{i+1})\in U$. In the first case, $(a,b)\in \overline{U}$, and in the second case, $(b,c)\in \overline{U}$. Thus $\overline{U}$ is coclosed. Since it is closed by definition, it is also biclosed.
	
	The case of a closed set follows since $K^{\circ} = \pos \setminus \left( \overline{\pos \setminus K} \right)$.
\end{proof}

\begin{theorem} \label{AInftyLattice}
	The poset $\bic(A_\infty)$ is a complete lattice.
\end{theorem}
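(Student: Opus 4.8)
The plan is to give explicit formulas for joins and meets and to check, using \Cref{lem:KeyClosureLemma}, that they land inside $\bic(A_\infty)$. First I would recall from \Cref{sec:NiceIntro} that an arbitrary intersection of closed sets is closed and an arbitrary union of coclosed sets is coclosed, so that the closure $\overline{(\cdot)}$ and interior $(\cdot)^{\circ}$ operators are defined on every subset of $\Phi^+$.

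For a family $\cX$ of biclosed sets I would propose $\bigvee \cX = \overline{\bigcup_{J \in \cX} J}$. The key point is that each $J$ is in particular coclosed, so the union $U := \bigcup_{J \in \cX} J$ is coclosed, whence \Cref{lem:KeyClosureLemma} makes $\overline{U}$ biclosed. It is then routine that $\overline{U}$ is the least upper bound: it contains every $J$ since $J \subseteq U \subseteq \overline{U}$, and any biclosed upper bound $M$ contains $U$ and, being closed, contains $\overline{U}$.

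Dually, I would propose $\bigwedge \cX = \bigl( \bigcap_{J \in \cX} J \bigr)^{\circ}$. Here each $J$ is closed, so $K := \bigcap_{J \in \cX} J$ is closed, and the second half of \Cref{lem:KeyClosureLemma} makes $K^{\circ}$ closed; since $K^{\circ}$ is coclosed by construction, it is biclosed. Again $K^{\circ} \subseteq K \subseteq J$ shows it is a lower bound, and any biclosed lower bound $M$ lies in $K$ and, being coclosed, lies in $K^{\circ}$, so $K^{\circ}$ is the greatest lower bound. The empty family is handled by the same formulas, giving bottom $\overline{\emptyset} = \emptyset$ and top $(\Phi^+)^{\circ} = \Phi^+$.

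I do not expect a genuine obstacle here: the entire combinatorial difficulty is already absorbed into \Cref{lem:KeyClosureLemma}, whose transitive-closure argument is precisely what guarantees that closing up a coclosed set (or taking the interior of a closed set) preserves the other half of the biclosed condition. Once that lemma is in hand, the statement that $\bic(A_\infty)$ is a complete lattice is a purely formal consequence of the facts that closed sets are closed under intersection and coclosed sets under union.
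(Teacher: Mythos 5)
Your proposal is correct and matches the paper's own proof essentially verbatim: both define $\bigvee \cX = \overline{\bigcup_{J \in \cX} J}$ and $\bigwedge \cX = \bigl( \bigcap_{J \in \cX} J \bigr)^{\circ}$, invoke \Cref{lem:KeyClosureLemma} to see these are biclosed, and then verify the universal properties formally. Your explicit treatment of the empty family is a minor (and harmless) addition the paper leaves implicit.
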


\begin{proof}
	Let $\cX$ be any subset of $\bic(A_\infty)$; we will show that $\cX$ has a least upper bound.

	Put $U = \bigcup_{I \in \cX} I$. Since $U$ is a union of coclosed sets, it is coclosed. Put $V = \overline{U}$. Since $V$ is a closure, it is closed; by \Cref{lem:KeyClosureLemma} it is also coclosed. So $V$ is a biclosed set. We claim that it is the least upper bound of $\cX$.
	
	Clearly, for each $I \in \cX$, we have $I \subseteq U \subseteq V$, so $V$ is an upper bound for $\cX$.
	Let $Y$ be any other upper bound for $\cX$. Then, for each $I \in \cX$, we have $I \subseteq Y$, so $U \subseteq Y$. 
	Since $Y$ is closed, we have $V = \overline{U} \subseteq Y$ as well. So $V$ is below every upper bound for $\cX$.
	
	The case of a greatest lower bound is analogous, using $\left( \bigcap_{I \in \cX} I \right)^{\circ}$.
\end{proof}

It will turn out that biclosed sets in $A_\infty$ are in correspondence with total orders of the integers. To state this precisely, we introduce the following notations. 
\begin{definition}
	Let $R$ be a binary relation on a set $X$. We say that $R$ is \newword{trichotomous} if, for all $x$, $y \in X$, exactly one of $xRy$, $yRx$, and $x = y$ holds.
	A \newword{total order} is a transitive trichotomous relation. Let $\tot(\ZZ)$ denote the collection of total orderings of $\ZZ$. For $x\in \tot(\ZZ)$, we write $\prec_x$ for the corresponding total order. We always write $<$ for the standard total order 
	$$\cdots < -2 < -1 < 0 < 1 < 2 < \cdots .$$
	We define an \newword{inversion} of $\prec_x$ to be a root $\beta_{ij}$ such that $i<j$ and $i \succ_x j$, and let $N(\prec_x)$ denote the set of such inversions.  
\end{definition}

Using this, we have the following neat characterization of biclosed sets.
\begin{theorem}
	Let $B \subseteq \Phi^+$ be a set of positive roots for $A_\infty$. Then $B$ is biclosed if and only if $B=N(\prec_x)$ for some total order $\prec_x\in \tot(\ZZ)$. 
\end{theorem}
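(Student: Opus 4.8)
The plan is to prove both implications, treating the statement as a correspondence between biclosed sets and total orders. The forward direction, that $N(\prec_x)$ is always biclosed, is the easier half: I would invoke the characterization from \Cref{ClosedBiclosedSection} that a set is biclosed exactly when its intersection with every full rank $2$ subsystem is a down-set or an up-set. The $A_1\times A_1$ subsystems impose no condition, so only the $A_2$ subsystems $\beta_{ij}\to\beta_{ik}\to\beta_{jk}$ (with $i<j<k$) matter. There, membership of the three roots $\beta_{ij},\beta_{ik},\beta_{jk}$ in $N(\prec_x)$ is governed by the three pairwise comparisons of $\prec_x$ on $\{i,j,k\}$; since $\prec_x$ restricts to a total order on this triple, the resulting membership pattern is one of the six inversion sets of $S_3$, each of which is a down-set or an up-set of the chain. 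Transitivity of $\prec_x$ is precisely what rules out the two ``middle differs'' patterns $\{\beta_{ik}\}$ and $\{\beta_{ij},\beta_{jk}\}$, which are the only non-admissible ones.

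For the converse I would reverse this construction. Given a biclosed $B$, define a relation on $\ZZ$ by declaring, for each pair $i<j$, that $i\succ j$ when $\beta_{ij}\in B$ and $i\prec j$ when $\beta_{ij}\notin B$. By construction this relation satisfies trichotomy and asymmetry, so it is a tournament; once I show it is transitive it becomes a total order $\prec_x$, and then $N(\prec_x)=B$ holds tautologically from the definition of an inversion. The entire content is therefore transitivity, which I would establish in the equivalent form that the tournament has no $3$-cycle. Restricting to a triple $p<q<r$, a $3$-cycle comes in two flavours. The cycle $p\succ q\succ r\succ p$ forces $\beta_{pq},\beta_{qr}\in B$ but $\beta_{pr}\notin B$, contradicting that $B$ is closed (since $\beta_{pr}=\beta_{pq}+\beta_{qr}$ sits between them in the $A_2$ subsystem); the cycle $p\prec q\prec r\prec p$ forces $\beta_{pq},\beta_{qr}\notin B$ but $\beta_{pr}\in B$, contradicting that $B$ is coclosed. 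Hence no triple carries a cycle, and a tournament without $3$-cycles is a linear order, so $\prec_x$ is a genuine total order.

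The main obstacle — really the only nontrivial point — is this transitivity step, and the subtlety is purely bookkeeping: one must keep the standard order $<$ on $\ZZ$ and the order $\prec_x$ read off from $B$ carefully distinct, and translate each cyclic pattern into the correct statement about membership of $\beta_{pq},\beta_{qr},\beta_{pr}$ before applying closure or coclosure. Everything else is immediate: trichotomy, asymmetry, and the identity $N(\prec_x)=B$ are all built into the construction, and the standard fact that an acyclic tournament is linear closes the argument. Accordingly I would write the converse as the substantive half and dispatch the forward direction by the down-set/up-set characterization.
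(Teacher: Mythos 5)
Your proposal is correct and matches the paper's argument in substance: both encode a subset of $\Phi^+$ as a tournament on $\ZZ$ and observe that closedness and coclosedness correspond exactly to forbidding the two orientations of a $3$-cycle on a triple $p<q<r$, so that biclosed sets are precisely transitive tournaments, i.e.\ total orders. The only cosmetic difference is that the paper runs this as a single biconditional chain while you split it into two directions, handling the forward one via the rank-$2$ down-set/up-set characterization --- the same computation on triples.
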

\begin{proof}
	Let $R$ be a trichotomous relation on $\ZZ$. Then $R$ is determined by the pairs $a<b$ for which $bRa$. Such a pair determines a positive root $\beta_{ij}$ of $A_\infty$, which we consider to be an inversion of $R$. Let $B_R$ be the set of inversions associated to $R$. 
	
	Then $B_R$ is closed if and only if there are no $a<b<c$ with $c R b R a R c$, and $B_R$ is coclosed if and only if there are no $a<b<c$ with $a R b R c R a$.
	So $B_R$ is biclosed if and only if $R$ is transitive.
	Since a trichotomous relation is a total order iff it is transitive, we thus have that $R$ is a total order $\prec_x$ if and only if $B$ is biclosed.
\end{proof}

Thus the poset of biclosed sets on $A_\infty$ is isomorphic to the poset of inversion sets of total orders on $\ZZ$, which we may naturally interpret as the ``weak order'' on $\tot(\ZZ)$. 



%
%
%
%
%

\subsection{The affine group \texorpdfstring{$\An$}{\~A\_\{n-1\}}}
\begin{definition}
	Let $n\geq 1$. The \newword{affine symmetric group} $\An$ is the group of permutations $\pi:\ZZ\rightarrow \ZZ$ which are equivariant under translation by $n$:
	\[ \pi(i+n) = \pi(i) + n \quad \forall i\in \ZZ \]
	which satisfy the normalization condition
	\[ \sum_{i=0}^{n-1}\pi(i) = \sum_{i=0}^{n-1}i .\]
\end{definition}
When $n\geq 3$, the Coxeter diagram of $\tAn$ is shown below:

\begin{center}
	\dynkin[scale=1.5,extended,Coxeter,labels={0,1,2,n-2,n-1},edge length = .45cm]{A}{}.
\end{center}

The reflections of $\tAn$ are
\[ t^{\tA}_{ij} \coloneqq  \prod_{r \in \ZZ} (i+rn, j+rn) \qquad i \not \equiv j \bmod n. \]
As we described earlier, the root system can be embedded in an $(n+1)$-dimensional vector space with spanning set $\te_i$ for $i \in \ZZ$ and relations $\te_{i+n} - \te_i = \te_{j+n} - \te_j$.
The corresponding positive roots are  $\beta_{ij} \coloneqq  \te_j - \te_i$ for  $i < j$ and $i \not\equiv j \bmod n$. The simple roots are $\alpha_i \coloneqq \beta_{i,i+1}$ for $0\leq i < n$. We also define the vector $\delta \coloneqq \te_n-\te_0$ for convenience later.

\begin{remark}
	The affine group $\tAn$ is sometimes only defined for $n \geq 3$, but we take the above definition for $\tA_{2-1}$ and $\tA_{1-1}$ as well. The Coxeter group $\tA_{1}$ has two generators, $s_0\coloneqq t^{\tA}_{01}$ and $s_1\coloneqq t^{\tA}_{12}$, with $m_{01} = \infty$. The Coxeter group $\tA_0$ is the trivial group, with no generators.
\end{remark}

We now describe what \Cref{Parametrization} says in the case of $\tAn$. 
The vector space $V_0^{\ast}$, is naturally coordinatized as $\RR^n / \RR(1,1,\ldots, 1)$, and we will write elements of $V_0^{\ast}$ as $(f_1, f_2, \ldots, f_n)$. 
The symmetric group $W_0 = S_n$ acts on these coordinates by permuting the $f_i$.

The reflecting hyperplanes $V_0^{\ast}$ are of the form $f_i = f_j$, so the faces of the Coxeter fan are in bijection with ordered set partitions (equivalently, total preorders) of $[n]$. We can determine the total preorder from the face $F$ as follows: if $f$ is a functional in the relative interior of $F$, then $j\geq i$ in the preorder if and only if $f(\te_j-\te_i)\geq 0$. In particular, $i$ and $j$ are in the same block of the ordered set partition if and only if $f(\beta_{ij})=0$. For example, with $n=4$, the ordered set partition $(\{1,3\}, \{2,4 \})$ corresponds to the face $\{ f_1 = f_3 \leq f_2 = f_4 \}$. 
So, to give a face $F$ is to give an ordered set partition $(B_1, B_2, \ldots, B_r)$ of $[n]$. 

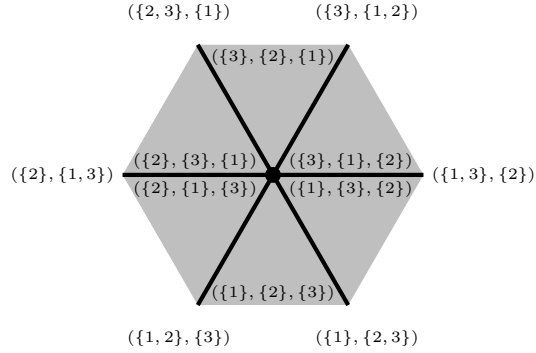
\begin{figure}[H]
	\centering
	\begin{tikzpicture}
		\fill[lightgray, opacity=.5] (0:2) -- (60:2) -- (120:2) -- (180:2) -- (240:2) -- (300:2) -- cycle;
		
		\draw[ultra thick] (0:2) -- (0:-2);
		\draw[ultra thick] (60:2) -- (60:-2);
		\draw[ultra thick] (120:2) -- (120:-2);
		
		\fill (0,0) circle (3pt);
		
		\draw[font=\tiny] node at (90:-1.57) {$(\{1\} , \{2\} , \{3\})$};
		\draw[font=\tiny] node at (90:1.57) {$(\{3\} , \{2\} , \{1\})$};
		\draw[font=\tiny] node[text centered] at (170:-1.05) {$(\{1\} , \{3\} , \{2\})$};
		\draw[font=\tiny] node[text centered] at (170:1.05) {$(\{2\} , \{3\} , \{1\})$};
		\draw[font=\tiny] node[text centered] at (10:-1.05) {$(\{2\} , \{1\} , \{3\})$};
		\draw[font=\tiny] node[text centered] at (10:1.05) {$(\{3\} , \{1\} , \{2\})$};
		
		\draw[font=\tiny] node at (60:-2.5) {$(\{1,2\} , \{3\})$};
		\draw[font=\tiny] node at (60:2.5) {$(\{3\} , \{1,2\})$};
		\draw[font=\tiny] node at (120:-2.5) {$(\{1\} , \{2,3\})$};
		\draw[font=\tiny] node at (120:2.5) {$(\{2,3\} , \{1\})$};
		\draw[font=\tiny] node at (0:-2.8) {$(\{2\} , \{1,3\})$};
		\draw[font=\tiny] node at (0:2.8) {$(\{1,3\} , \{2\})$};
	\end{tikzpicture}
	\caption{The $A_2$ Coxeter fan. Each face is labeled by an ordered partition of $\{1,2,3\}$. The origin is also a face, with label $(\{1,2,3\})$.}\label{fig:A2Fan}
\end{figure}

The root system $\Phi_F$ consists of the roots $\te_p - \te_q$ where $p \bmod n$ and $q \bmod n$ belong to the same block $B_i$ of the set partition.
Thus, the indecomposable components of $\Phi_F$ are in bijection with the nonsingleton blocks $B_i$. 
So, to give a pair $(F, \Phi')$ is to give an ordered set partition $(B_1, B_2, \ldots, B_r)$ of $[n]$ and a subset $R$ of the non-singleton blocks of this partition.
Set $n_i = |B_i|$.

The face subgroup $W_F$ is a product $\prod \tA_{n_i-1}$, where $\tA_{n_i-1}$ acts on the integers which reduce to elements of $B_i$ modulo $n$.

In summary, to give a triple $(F, \Phi', w)$ in type $\tAn$ corresponds to giving:
\begin{enumerate}
	\item An ordered set partition $(B_1, B_2, \ldots, B_r)$ of $[n]$.
	\item A subset $R$ of the nonsingleton blocks of this partition.
	\item For each non-singleton block $B_i$, an affine permutation $w_i$ in $\tA_{n_i-1}$. 
\end{enumerate}

We now explain how to translate this data into a total order on $\ZZ$, as described in \Cref{sec:NiceIntro}.
Let $p \bmod n$ and $q \bmod n$ be in the blocks $B_i$ and $B_j$.
If $i \neq j$, then we order $p \prec q$ if $i<j$ and order $p \succ q$ if $i>j$.
So each of the $B_i + n \ZZ$ forms an interval for our order, and we must describe the ordering of each $B_i + n \ZZ$.
If $B_i$ is a singleton, recall that we do not need to choose an order on $B_i + n \ZZ$.
Finally, we must describe how to order $B_i + n \ZZ$ if $B_i$ is not a singleton.
If $B_i \not\in R$, then we order $B_i + n \ZZ$ by the permutation $w_i$; if $B_i \in R$, then we order $B_i  + n \ZZ$ by the reverse of $w_i$.
We leave to the reader the routine check that the biclosed set $B(F, \Phi', w)$ is then $\{ \te_q - \te_p \in \Phi^+ : p \succ q \}$. 
This completes the proof of \Cref{AtildeModel}.

We will now show that we can use  \Cref{AtildeModel} to prove that biclosed sets in type $\tAn$ form a lattice.
Let $L_n$ be the set of translation invariant total orders on $\ZZ$, meaning total orders that obey $i \prec j$ if and only if $i+n \prec j+n$.
Since the set of all total orders of $\ZZ$ is identified with $\bic(A_{\infty})$, we embed $L_n$ into $\bic(A_{\infty})$; order $L_n$ by the order induced from $\bic(A_{\infty})$.

%
%

\begin{lemma}
	The poset $L_n$ is a complete sublattice of $\bic(A_{\infty})$ and, in particular, is a complete lattice.
\end{lemma}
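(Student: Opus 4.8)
The plan is to identify $L_n$ with the fixed-point set of a lattice automorphism of $\bic(A_\infty)$ and then invoke the general fact that the fixed points of an automorphism of a complete lattice form a complete sublattice.

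First, I would introduce the shift map $\tau\colon \ZZ \to \ZZ$, $\tau(i) = i+n$. It induces a bijection on positive roots via $\beta_{ij} \mapsto \beta_{i+n,\,j+n}$, and since it carries each rank-$2$ subsystem $\beta_{ij} \to \beta_{ik} \to \beta_{jk}$ to $\beta_{i+n,j+n} \to \beta_{i+n,k+n} \to \beta_{j+n,k+n}$ while preserving betweenness, it sends biclosed sets to biclosed sets. Thus $\tau$ acts as an order automorphism of $\bic(A_\infty)$. Next I would show that $L_n = \Fix(\tau)$ under the identification of $\bic(A_\infty)$ with total orders on $\ZZ$. A total order $\prec_x$ is translation invariant precisely when, for $i<j$, one has $i \succ_x j$ if and only if $i+n \succ_x j+n$, i.e. $\beta_{ij} \in N(\prec_x)$ if and only if $\beta_{i+n,j+n} \in N(\prec_x)$; this is exactly the statement $\tau(N(\prec_x)) = N(\prec_x)$. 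Hence the biclosed sets arising from $L_n$ are precisely the $\tau$-fixed ones, and the order on $L_n$ induced from $\bic(A_\infty)$ is the given one.

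It then remains to check that $\Fix(\tau)$ is closed under the arbitrary joins and meets of $\bic(A_\infty)$. By \Cref{AInftyLattice} we have $\bigvee \cX = \overline{\bigcup_{I \in \cX} I}$ and $\bigwedge \cX = \left(\bigcap_{I \in \cX} I\right)^{\circ}$. Since $\tau$ carries the collection of closed sets onto itself, it commutes with the closure operator, $\tau(\overline{A}) = \overline{\tau(A)}$, and likewise with the interior; equivalently, $\tau$ as a complete-lattice automorphism preserves arbitrary joins and meets. Consequently, if every $I \in \cX$ satisfies $\tau(I) = I$, then $\bigcup_{I \in \cX} I$ and $\bigcap_{I \in \cX} I$ are $\tau$-invariant, and hence so are their closure and interior. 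Thus $\bigvee \cX$ and $\bigwedge \cX$ lie in $\Fix(\tau) = L_n$, and they automatically serve as the join and meet computed inside $L_n$. This exhibits $L_n$ as a complete sublattice of $\bic(A_\infty)$, and in particular as a complete lattice.

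The argument is essentially formal once the fixed-point description is established; the only genuine verifications are that $\tau$ preserves biclosedness, which is immediate from its action on rank-$2$ subsystems, and that $L_n$ corresponds to $\Fix(\tau)$. The one mildly delicate point I would spell out is that $\tau$ commutes with the closure and interior operators, which follows from the fact that $\tau$ is a bijection carrying the family of closed sets bijectively onto itself, so that an intersection (resp.\ union) of all closed (resp.\ coclosed) sets containing (resp.\ contained in) a $\tau$-invariant set is again $\tau$-invariant.
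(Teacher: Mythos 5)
Your proof is correct and takes essentially the same route as the paper: the paper also realizes $L_n$ as the fixed points of the shift action (stated there as a $\ZZ$-action on total orders, which is generated by your single automorphism $\tau$) and concludes that, since a poset automorphism of a complete lattice commutes with arbitrary meets and joins, the meet and join of fixed points are again fixed. Your additional verifications --- that $\tau$ preserves biclosedness via rank-$2$ subsystems and commutes with closure and interior --- merely spell out details the paper leaves implicit.
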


\begin{proof}
	Let $\cX$ be a subset of $L_n$. The meet and join $\bigmeet \cX$ and $\bigjoin \cX$ exist in $\bic(A_{\infty})$; we must show that $\bigmeet \cX$ and $\bigjoin \cX$  are also translation invariant.
	
	Let the group $\ZZ$ act on the set of total orders by $a (\prec + k) b$ if and only if $a+kn \prec b+kn$. Then $L_n$ is the fixed points for this action.
	Since this action preserves the poset structure, it commutes with meet and join, so the meet and join of a collection of fixed point must also be fixed.
\end{proof}

We now construct maps $\pi: L_n \onto \bic(\tAn)$ and $\iota : \bic(\tAn) \into L_n$.
The map $\pi$ is simple: Given a translation invariant biclosed subset $X$ of $A_{\infty}$, we  put
\[ \pi(X) = \{ \beta_{ij} :  (i,j) \in X,\ i < j,\ i \not\equiv j \bmod n \} . \]

To define $\iota$, let $X \in \bic(\tAn)$. 
Then $X$ corresponds to a translation invariant total order on $\ZZ$ which is defined up to reversing intervals of the form $r+n \ZZ$.
Let $\iota(X)$ be the  translation invariant total order where we order each such interval in increasing order.

\begin{lemma} 
	Let $X$ be a translation invariant biclosed subset of $A_\infty$. Then $\pi(X)$ is a biclosed subset of $\tAn$. 
\end{lemma}

\begin{proof}
	It's enough to check that $\pi(X)$ is closed; taking complements then gives that $\pi(X)$ is coclosed. Write $\prec$ for the translation invariant total order associated to $X$. We can consider the closure criteria (1)-(3) from \Cref{sec:NiceIntro}. Let $i<j<k$. If $\beta_{ij},~\beta_{jk}\in \pi(X)$, then $(i,j)$ and $(j,k)$ are in $X$, so $i\succ j$ and $j\succ k$, hence $i\succ k$. If $i\not\equiv k\bmod n$, then $\beta_{ik}\in X$ and (1) is satisfied. If instead $i\equiv k\bmod n$, then it must be that $k\succ k+n\succ k+2n\succ\cdots$, since in a translation invariant total order, the elements of $\ZZ$ in a given residue class must be ordered in increasing or decreasing order. Hence $(i,k+\ell n)$ is in $X$ for all $\ell\geq 0$, so (2) is satisfied. If now we assume $\beta_{ij}$ and $\beta_{ik}\in \pi(X)$, with $j\equiv k\bmod n$, then $i\succ j$ and $i\succ k$. The $\prec$-interval between $j$ and $k$ will contain every integer of the form $j+\ell n$ with $j\leq j+\ell n \leq k$, again since residue classes appear in monotone order. Hence $i\succ j+\ell n$ for such choices of $\ell$. Therefore (3) is satisfied and $\pi(X)$ is closed.
\end{proof}

\begin{lemma} \label{lem:PiIotaMaps}
	The maps $\pi:L_n\to \bic(\tA_n)$ and $\iota:\bic(\tA_n)\to L_n$ are maps of posets satisfying $\pi(\iota(X))=X$ for any $X\in\bic(\tAn)$.
\end{lemma}

\begin{proof}
	It is clear that $\pi$ is a map of posets, since $L_n$ is ordered by containment of subsets of $\{ (i,j) : i<j \}$, and $\pi$ simply intersects these subsets with  $\{ (i,j) : i<j,\ i \not\equiv j \bmod n \}$.
	
	It is almost as clear that $\iota$ is a map of posets. The only issue is the following: Let $X$ and $Y \in \bic(\tA_n)$ with $X \subseteq Y$. We need to rule out the possibility that some residue class $r+n \ZZ$ is ordered in decreasing order in $\iota(X)$ but in increasing order in $\iota(Y)$. 
	Let $\prec_X$ and $\prec_Y$ be translation invariant total orders corresponding to $X$ and $Y$. 
	Since $r+n \ZZ$ is ordered in decreasing order in $\iota(X)$, the residue class $r+n \ZZ$ is not an interval for $\prec_X$, so we can find some $s \not \equiv r \bmod n$ and some $q>0$ such that $r < s < r+qn$ and $r \succ_X s \succ_X r+qn$. Then $(r,s)$ and $(s, r+qn)$ must be inverted in $Y$ as well, so $r \succ_Y r+qn$.
	
	Now let $X\in\bic(\tAn)$. That $\pi(\iota(X))=X$ follows from the fact that, if $p<q$ and $p\not\equiv q\bmod n$, then $(p,q)$ is an inversion of $\iota(X)$ if and only if $\beta_{pq}$ is an inversion of $X$.
\end{proof}

We now deduce:
\begin{theorem} \label{thm:TypeALattice}
	The poset $\bic(\tAn)$ is a complete lattice.
\end{theorem}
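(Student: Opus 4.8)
The plan is to deduce completeness of $\bic(\tAn)$ formally from the three facts already in hand: that $L_n$ is a complete lattice (the lemma preceding \Cref{lem:OneSidedInverse}), that $\pi$ and $\iota$ are order-preserving (\Cref{lem:PiIotaMaps}), and that $\iota$ is a one-sided inverse of $\pi$, i.e.\ $\pi\circ\iota = \mathrm{id}$ (\Cref{lem:OneSidedInverse}). Together these exhibit $\bic(\tAn)$ as an order-preserving retract of the complete lattice $L_n$, and the strategy is to write down an explicit least upper bound for an arbitrary family and check its defining properties. No further combinatorial input about translation-invariant orders should be needed; everything beyond the cited lemmas is a short diagram chase.

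Concretely, given a family $\cX \subseteq \bic(\tAn)$, I would set
\[ Z \coloneqq \pi\Big( \bigvee_{L_n} \{\, \iota(X) : X \in \cX \,\} \Big), \]
where the inner join is taken in $L_n$ and exists because $L_n$ is complete. First I would verify that $Z$ is an upper bound: for each $X \in \cX$ we have $\iota(X) \leq \bigvee_{L_n}\{\iota(X')\}$, and applying the order-preserving map $\pi$ together with $\pi(\iota(X)) = X$ gives $X \leq Z$. Then I would verify that $Z$ is least: if $W \in \bic(\tAn)$ satisfies $X \leq W$ for all $X \in \cX$, then monotonicity of $\iota$ yields $\iota(X) \leq \iota(W)$ for every $X$, hence $\bigvee_{L_n}\{\iota(X)\} \leq \iota(W)$ by the defining property of the join in $L_n$; applying $\pi$ and using $\pi(\iota(W)) = W$ gives $Z \leq W$. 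Thus $Z = \bigvee \cX$, so every subset of $\bic(\tAn)$ has a least upper bound. Since a poset in which every subset (including $\emptyset$ and the whole poset) admits a least upper bound is automatically a complete lattice --- the greatest lower bound of a family being the join of its set of lower bounds --- this finishes the proof.

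I do not expect a serious obstacle here: the argument is purely formal once the supporting lemmas are granted. The only point requiring care is that $Z$ really is the \emph{least} upper bound, and this rests entirely on $\iota$ being order-preserving, which is the nontrivial content of \Cref{lem:PiIotaMaps} (the case-analysis there ruling out a residue class being decreasing in $\iota(X)$ but increasing in $\iota(Y)$ when $X \subseteq Y$). Conceptually, the retraction data $\pi\iota = \mathrm{id}$ together with monotonicity makes $\iota$ a left adjoint to $\pi$, so that $\iota$ transports joins from $\bic(\tAn)$ up into $L_n$ and $\pi$ brings them back down; the displayed computation is just this adjunction made explicit, and it avoids having to separately establish $\iota\pi \leq \mathrm{id}$.
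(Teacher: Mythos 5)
Your proof is correct and is essentially the paper's own argument: both exhibit $\bic(\tAn)$ as a monotone retract of the complete lattice $L_n$ using exactly \Cref{lem:OneSidedInverse,lem:PiIotaMaps} — the paper packages the retraction as the idempotent $p = \iota \circ \pi$ and works with $\Fix(p)$, while your formula $\pi\bigl(\bigvee_{L_n}\{\iota(X) : X \in \cX\}\bigr)$ is the same computation unwound. (One harmless quibble, since your displayed verification never uses it: the closing remark that $\pi\iota=\mathrm{id}$ plus monotonicity already makes $\iota$ a left adjoint to $\pi$ is not true in general — that would additionally require $\iota\pi\leq\mathrm{id}$ — but your direct least-upper-bound check stands on its own.)
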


\begin{proof}
	Put $p = \iota \circ \pi$. 
	By \Cref{lem:PiIotaMaps}, $p$ is an idempotent map of posets $L_n \to L_n$, and $\bic(\tAn)$ is isomorphic to the poset $\Fix(p)$ of fixed points of $p$.
	
	Let $\cX \subseteq \Fix(p)$; we must show that $\cX$ has a join and a meet in $\Fix(p)$.
	We do the case of the join; the meet is analogous.
	
	The join $\bigvee \cX$ exists in $L_n$; call it $Z$. We claim that $p(Z)$ is the join of $\cX$ in $\Fix(p)$. 
	For any $I \in \cX$, we have $I \subseteq Z$ and thus, since $p$ is order preserving, we have $I = p(I) \subseteq p(Z)$. 
	So $p(Z)$ is an upper bound for $\cX$.
	
	Let $Y \in \Fix(p)$ be any other upper bound for $\cX$. 
	Since $Z$ is the join of $\cX$ in $L_n$, we have $Z \subseteq Y$ and thus $p(Z) \subseteq p(Y) = Y$. So $p(Z)$ is the least upper bound for $\cX$, as desired.
\end{proof}

\begin{eg} 
	We close with an extended example of computing a join in $\tA_{4-1}$, both in terms of roots and in terms of total orders. 
	Specifically, we will compute $(s_0 s_1) \join (s_2 s_3)$. 
	The inversion sets of $s_0 s_1$ and $s_2 s_3$ are $\{ \alpha_0, \alpha_0 + \alpha_1 \}$ and $\{ \alpha_2, \alpha_2 + \alpha_3 \}$ respectively. 
	According to \Cref{AtildeLattice}, the join of these inversion sets should be $\overline{\{\alpha_0, \alpha_0+\alpha_1, \alpha_2, \alpha_2+\alpha_3 \}}$. 
	
	We claim that this closure is 
	\[B\coloneqq    \bigcup_{k=0}^{\infty} \{ \alpha_0 + k \delta,\ \alpha_0+\alpha_1+k \delta,\  \alpha_0+\alpha_1+\alpha_2 + k \delta,\  \alpha_2 + k \delta, \alpha_2+\alpha_3+k \delta,\ \ \alpha_2+\alpha_3+\alpha_0 + k \delta \} .\]
	We sketch the proof: First of all, $\alpha_0+\alpha_1$ and $\alpha_2+\alpha_3$  are the base of a $\tA_1$ subsystem, so the closure contains each root of the form $\alpha_0+\alpha_1+k \delta$ and $\alpha_2+\alpha_3+k \delta$. 
	Then, using the identities $\alpha_0+k \delta = (\alpha_0+\alpha_1+(k-1) \delta)+(\alpha_2+\alpha_3+\alpha_0)$, $\alpha_2+k \delta = (\alpha_2+\alpha_3+(k-1) \delta)+(\alpha_0+\alpha_1+\alpha_2)$, $\alpha_0+\alpha_1+\alpha_2 + k \delta = (\alpha_0+\alpha_1 + k \delta) + \alpha_2$ and $\alpha_2+\alpha_3+\alpha_0 + k \delta = (\alpha_2+\alpha_3 + k \delta) + \alpha_0$, we deduce that the other elements of $B$ are in the closure.
	An easy case check shows that $B$ is closed, so $B$ is the closure.
	A further check verifies that $B$ is biclosed, as promised in  \Cref{AtildeLattice}.
	In terms of our classification of biclosed sets, the ordered set partition is $(\{ 1,3 \},\ \{ 2, 4 \})$, with $\Phi'$ corresponding to the block $\{ 2,4 \}$.
	
	We observe that $B$ contains $\alpha_0+\alpha_1$ and $\alpha_2+\alpha_3$, but not $\alpha_1+\alpha_2$ and $\alpha_3+\alpha_0$, even though $(\alpha_0+\alpha_1)+(\alpha_2+\alpha_3) = (\alpha_1+\alpha_2)+(\alpha_3+\alpha_0)$. So we cannot find a $\theta \in V^{\ast}$ which pairs positively (or nonnegatively) with the elements of $B$ and negatively with the elements of $\Phi^+\setminus B$.
	In other words, this biclosed set is not weakly separable. This matches \Cref{thm:weaklySeparableCriterion}, since $\Phi_F$ has two components, corresponding to the two blocks of  $(\{ 1,3 \},\ \{ 2, 4 \})$, and $\Phi'$ is one of the two components.
	
	In terms of total orders, we have
	\[ \begin{array}{l@{~\longleftrightarrow\ \cdots \prec}c@{\ \prec \ }c@{\ \prec \ }c@{\ \prec \ }c@{\ \prec \ }c@{\ \prec \ }c@{\ \prec \ }c@{\ \prec \ }c@{\prec \cdots}}
		\{ \alpha_0, \alpha_0 + \alpha_1 \}  &   1  &  2  &  0  &  3  &  5  &  6  &  4  &  7  \\
		\{ \alpha_2, \alpha_2 + \alpha_3 \} &     -2  &  1  &  3  &  4  &  2  &  5  &  7  &  8  \\
	\end{array}\]
	The join $B$ corresponds to the order
	\[ \cdots 1 \prec 3 \prec 5 \prec 7 \prec \cdots \prec 8 \prec 6 \prec 4 \prec 2 \prec 0 \prec \cdots .\]
\end{eg}

\subsection{The affine group \texorpdfstring{$\tC_n$}{\~C\_n}} \label{sec:Cmodel}

In this section and in \Cref{sec:Bmodel,sec:Dmodel}, set $N = 2n+1$ and adopt the convention that the variables $i$, $j$, $k$ and $\ell$ denote integers which  are not divisible by $N$.
Much of the notation we will introduce below will be useful not only for $\tC_n$ but also for $\tB_n$ and $\tD_n$.

\begin{definition}
	Let $n\geq 1$. The \newword{signed affine symmetric group} $\tC_n$ is the subgroup of $\tS_N = \tA_{N-1}$ consisting of permutations $f$ obeying $f(x) = - f(-x)$.
\end{definition}

\begin{remark}
	We note that our conditions  $f(x) = - f(-x)$ and $f(x+N) = f(x) + N$ force $f(x) = x$ for any $x \equiv 0 \bmod N$. 
	The elements of $N \ZZ$ thus play no important role and we could discard them and then renumber $\ZZ \setminus (N \ZZ)$ by the elements of $\ZZ$.
	If one does this, we obtain the subgroup of permutations in $\tS_{N-1}$ which obey $f(1-x) = 1-f(x)$; this group is also isomorphic to $\tC_n$. 
	However, we have decided to follow the conventions of~\cite{Bjorner2005}, which uses the periodicity we have imposed here.
\end{remark}

\begin{remark}
	Usually, $\tC_n$ is only defined for $n \geq 2$, but we will use the above definition for $n=1$ as well. In that case, $\tC_1$ is the subgroup of $\tA_{3-1}$ generated by $t^{\tA}_{(-1)1}$ and $t^{\tA}_{12}$; as an abstract Coxeter group, $\tC_1 \cong \tA_1$.
\end{remark}

The Coxeter diagram for $\tC_n$, when $n\geq 2$, is:
\begin{center}
	\dynkin[scale=1.5,extended,Coxeter,labels={0,1,2,3,n-2,n-1,n},edge length = .45cm]{C}{}.
\end{center}

We now describe the reflections in $\tC_n$, they are
\[ t^{\tC}_{ij} \coloneqq  t^{\tA}_{ij} t^{\tA}_{(-i)(-j)} \qquad i \not\equiv \pm j \bmod N \]
\[ t^{\tC}_{ij} = t^{\tA}_{ij} \qquad i+j \equiv 0 \bmod N. \]
(We recall our standing convention that $i$ and $j$ are never $0 \bmod N$.)
For $i < j$ with $i\not\equiv j \bmod N$, the reflection $t_{ij}$ is an inversion of $f$ if and only if $f(j) < f(i)$.

Let $V$ be the $(n+1)$-dimensional real vector space with spanning set $\te_x$, indexed by the integers, modulo the relations $\te_{x+N} = \te_{x} + \te_N$ and $\te_{-x} = -\te_{x}$. 
(The second condition implies that $\te_0=0$.)
Then the root system $\Phi$ can be identified with the vectors $\te_j - \te_i$ for $i \not\equiv j \bmod N$ in this vector space.
The positive root corresponding to $t^{\tC}_{ij}$ is $\beta_{ij}\coloneqq \te_j - \te_i$. The simple roots are $\alpha_i\coloneqq\beta_{i,i+1}$ for $i=1,\ldots,n$ and $\alpha_0\coloneqq \beta_{-1,1}$. We also define the vector $\delta\coloneqq \te_{N}-\te_0 = \te_N$ for convenience later.

\begin{remark}
	We have $\beta_{-i,i} = \te_{i} - \te_{-i} = 2 \te_i$. This is a ``long root''. We will not need this level of detail, but we include it to confirm that this model matches the usual crystallographic root system. 
\end{remark}

We will want to work with faces of the Coxeter fan of type $C_n$. 
These are indexed by \newword{signed ordered set partitions}.
A signed ordered set partitions is a total preorder on $\{ 0, \pm 1, \pm 2, \ldots, \pm n \}$ obeying $i \preceq j$ if and only if $(-j) \preceq (-i)$.
We note that such a partition always has an odd number of parts,  $p_{-r}$, \dots, $p_{-2}$, $p_{-1}$, $p_0$, $p_1$, $p_2$, \dots, $p_r$, with $0 \in p_0$. 
We call $p_0$ the central block. 

\begin{theorem}\label{thm:typeCorders}
	Let $\prec$ be a total order on $\ZZ$ which is invariant for translation (meaning $i \prec j$ implies $(i+N) \prec (j+N)$) and for negation (meaning that $i \prec j$ implies $-j \prec -i$).
	Then $\{ \te_j - \te_i : i<j,\ i \succ j,\ i \not\equiv j \bmod N \}$ is a biclosed set of $\tC_n$, and all biclosed sets for $\tC_n$ are of this form.
	Two total orders on $\ZZ$ give the same biclosed set if and only they are the same up to reversal on intervals of the form $r+N \ZZ$.
\end{theorem}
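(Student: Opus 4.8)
The plan is to exploit the inclusion $\tC_n \subseteq \tA_{N-1}$ and reduce the statement to \Cref{AtildeModel}, which already classifies the biclosed sets of $\tA_{N-1}$ by translation-invariant total orders. The extra feature of $\tC_n$ is the negation symmetry $x \mapsto -x$ on $\ZZ$; it induces an involution $\theta$ of the root system of $\tA_{N-1}$ sending the positive root $\beta_{ij} = \te_j - \te_i$ to $\beta_{-j,-i}$, and $\tC_n$ is exactly the $\theta$-fixed subgroup. Imposing the relation $\te_{-i} = -\te_i$ defines a linear ``folding'' map $\mathrm{fold}$ from the roots of $\tA_{N-1}$ to the roots of $\tC_n$, $\beta^{\tA}_{ij} \mapsto \beta^{\tC}_{ij}$, which is two-to-one on short roots (identifying each $\theta$-orbit $\{\beta_{ij}, \beta_{-j,-i}\}$) and one-to-one on the roots producing the long roots $2\te_i$ (the $\theta$-fixed roots). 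I would set up a bijection between biclosed sets of $\tC_n$ and $\theta$-invariant biclosed sets of $\tA_{N-1}$: to a biclosed set $B \subseteq \Phi^+$ of $\tC_n$ associate its full preimage $\widehat B = \mathrm{fold}^{-1}(B)$, which is automatically $\theta$-invariant, and conversely fold a $\theta$-invariant biclosed set back down.

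The two directions of this bijection behave very differently. That $\widehat B$ is biclosed in $\tA_{N-1}$ I would prove cleanly using \Cref{lem:DoubleCondition}: if $\widehat B$ were not biclosed there would be a positive linear relation $a\alpha + b\beta + c\gamma = 0$ among elements of $D(\widehat B)$, and applying the linear map $\mathrm{fold}$ produces a positive relation among $\mathrm{fold}(\alpha), \mathrm{fold}(\beta), \mathrm{fold}(\gamma) \in D(B)$. Since $\mathrm{fold}$ is injective on the span of any rank-two subsystem of $\tA_{N-1}$ (a short check, as its kernel is spanned by the vectors $\te_i + \te_{-i}$ and meets no such span), the three images remain pairwise non-proportional, so \Cref{lem:DoubleCondition} shows $B$ is not biclosed, a contradiction. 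The reverse direction, that folding a $\theta$-invariant biclosed set yields a biclosed set of $\tC_n$, cannot be argued this way, because the defining relations of a $C_2$-subsystem (for instance $2\te_i + 2\te_j = 2(\te_i+\te_j)$) are created by folding and do not lift to $\tA_{N-1}$. Here I would instead verify biclosedness directly on each rank-two subsystem of $\tC_n$: the $A_1 \times A_1$ case is vacuous, the $A_2$ and affine $\tA_1$ cases reduce to the corresponding conditions in $\tA_{N-1}$ and follow from $\theta$-invariance, and the $C_2$ case is the crux. For the $C_2$ case I would restrict $\prec$ to the four indices $\{-j,-i,i,j\}$: negation-invariance forces $\prec$ to have the palindromic shape $a \prec b \prec -b \prec -a$, and a finite check over these orders shows that the roots $2\te_i,\ \te_i+\te_j,\ 2\te_j,\ \te_j-\te_i$ lying in $B$ always form a prefix or suffix in their rank-two order.

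With the bijection in hand I would assemble the theorem. First, for a translation- and negation-invariant total order $\prec$, both invariances are exactly what is needed for $B(\prec) = \{\te_j - \te_i : i<j,\ i \succ j,\ i \not\equiv j \bmod N\}$ to be well defined on the roots of $\tC_n$ (translation-invariance handles $\te_{i+N} = \te_i + \te_N$ and negation-invariance handles $\te_{-i} = -\te_i$); moreover $\mathrm{fold}(I(\prec)) = B(\prec)$, where $I(\prec)$ is the $\tA_{N-1}$ biclosed set of \Cref{AtildeModel}, so $B(\prec)$ is biclosed by the reverse direction above. Conversely, any biclosed $B$ unfolds to a $\theta$-invariant biclosed $\widehat B$, which by \Cref{AtildeModel} equals $I(\prec)$ for some translation-invariant $\prec$; the residue classes $r + N\ZZ$ carry no recorded inversions, so I may reverse them freely and thereby choose $\prec$ negation-invariant, giving $B = B(\prec)$. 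Finally, since unfolding is injective (it is a right inverse to folding), $B(\prec) = B(\prec')$ if and only if $I(\prec) = I(\prec')$, and by the equivalence in \Cref{AtildeModel} this holds exactly when $\prec$ and $\prec'$ differ by reversals of residue-class intervals $r+N\ZZ$; among negation-invariant orders these reversals occur in the symmetric pairs $\{r+N\ZZ,\ -r+N\ZZ\}$, which is the stated equivalence. The main obstacle throughout is the single reverse direction, that folding preserves biclosedness, since it is the one place where the passage between $\tC_n$ and $\tA_{N-1}$ introduces genuinely new ($C_2$) relations and cannot be reduced to the linear algebra of \Cref{lem:DoubleCondition}.
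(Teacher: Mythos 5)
Your strategy --- classify $\bic(\tC_n)$ by folding from $\tA_{N-1}$ and identifying it with the negation-invariant ($\theta$-fixed) biclosed sets --- is a natural one (the paper itself uses exactly this identification later, to prove the \emph{lattice} property for $\tC_n$), but as a proof of the classification it has a genuine gap: your folding dictionary breaks on the roots of $\tA_{N-1}$ that involve the residue class $N\ZZ$. The root system of $\tA_{N-1}$ has positive roots $\beta_{ij}$ for all $i \not\equiv j \bmod N$, including $i$ or $j \equiv 0 \bmod N$; under the relations $\te_{-i} = -\te_i$ (so $\te_0 = 0$) these roots fold to vectors of the form $\pm\te_j + k\te_N$, which are \emph{not} roots of $\tC_n$. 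Consequently $\widehat B = \mathrm{fold}^{-1}(B)$ says nothing about these roots, and the set you get by omitting them is in general not biclosed in $\tA_{N-1}$. Concretely, take $n=1$, $N=3$, and $B = \{2\te_1\} = \{\te_4 - \te_2\}$, the inversion set of $s_0$ in $\tC_1$. Then $\mathrm{fold}^{-1}(B) = \{\beta_{24}\}$, which fails coclosure against the $A_2$-subsystem $\beta_{23} \to \beta_{24} \to \beta_{34}$: the correct unfolded set is $\{\beta_{23}, \beta_{24}, \beta_{34}\} = N_{\tA_2}(s_0)$, and the two missing roots lie in the $0$-class. Your \Cref{lem:DoubleCondition} transfer argument fails at exactly this point: for the relation $\beta_{23} + \beta_{34} - \beta_{24} = 0$, folding sends both $\beta_{23}$ and $\beta_{34}$ to $\te_1$, which is not an element of $D(B)$, and your parenthetical claim that the kernel of $\mathrm{fold}$ meets the span of no rank-two subsystem is false --- the kernel contains $\beta_{23} - \beta_{34}$, which lies in the span of this very $A_2$-subsystem. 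So the converse half of your argument (every biclosed set of $\tC_n$ unfolds to some $I(\prec)$, hence comes from an order) is unproven; the same blind spot infects the final uniqueness step, since $\widehat{B(\prec)} = I(\prec)$ requires reconstructing the inversions $(i, kN)$ that $B(\prec)$ does not record.

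The gap is repairable but requires real additional work: one must \emph{define} the unfolding on $0$-class roots by inserting each $kN$ into the order on $\ZZ \setminus N\ZZ$ at its forced position (negation- and translation-invariance force $kN$ to lie between $kN - i$ and $kN + i$ for every $i$, with a genuine ambiguity only when all of $N\ZZ$ forms an interval), and then verify biclosedness for the rank-two subsystems passing through the $0$-class --- this is precisely the content of the corollary the paper states after the theorem, identifying $\bic(\tC_n)$ with suitable total orders on $\ZZ \setminus (N\ZZ)$. Note also that the paper's own proof runs in the opposite direction: it does not fold from type $\tA$ at all, but specializes the uniform classification of \Cref{Parametrization}, indexing faces of the $C_n$ Coxeter fan by signed ordered set partitions, computing the parahoric $\tC_{(|p_0|-1)/2} \times \prod \tA_{|p_i|-1}$, and reading off the order directly from the triple $(F, \Phi', w)$; the fixed-point description under the involution $\sigma$ is then a \emph{consequence} used only for the lattice theorem. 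Your forward direction (that each negation- and translation-invariant order yields a biclosed set, via the palindromic $C_2$ check) is sound and arguably more self-contained than the paper's, but as written the classification statement itself rests on the broken unfolding step.
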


\begin{proof}[Proof sketch]
	Let $F$ be a face of the $C_n$ Coxeter fan corresponding to the signed ordered set partition $p_{-r}$, \dots, $p_{-2}$, $p_{-1}$, $p_0$, $p_1$, $p_2$, \dots, $p_r$. The corresponding face subgroup is $\tC_{(|p_0|-1)/2} \times \prod_{i=1}^r \tA_{|p_i|-1}$, where we take the trivial factor for any singleton $p_i$, including the case $p_0 = \{ 0 \}$. Each of the nontrivial factors is a connected Coxeter group, so choosing $\Phi'$ is just choosing a subset of the non-singleton blocks, with the condition that if we choose $p_i$ then we must also choose $p_{-i}$. 
	
	The biclosed set $B(F, \Phi')$ corresponds to the total order on $\ZZ$ where we take 
	\[(p_{-r} + N \ZZ) \prec \cdots \prec  (p_{-1} + N \ZZ) \prec (p_0 + N \ZZ) \prec (p_1 + N \ZZ)  \prec \cdots \prec (p_r + N \ZZ)\]
	and order $p_i + N \ZZ$ forwards if $p_i$ is not in $\Phi'$ and backwards if $p_i$ is in $\Phi'$.
	As in Type $\tA$, if $p_i$ is a singleton then it doesn't matter if we order $p_i+N \ZZ$ forwards or backwards.
	
	Then $B(F, \Phi', w)$ corresponds to acting on the order on each $p_i+N \ZZ$ by $w$.
\end{proof}

\begin{eg}
	Define an auxiliary set of roots $B_\infty =$
	\[ \{ \beta_{-1,1},\beta_{-1,2},\beta_{-1,3},\beta_{-1,4},-\beta_{-4,4},-\beta_{-3,4},-\beta_{-2,4},-\beta_{-1,4},-\beta_{1,2},-\beta_{1,3},-\beta_{1,4},-\beta_{2,4},-\beta_{3,4} \}  \]
	and consider the set of positive roots
	\[ B = \{\beta_{-3,3},\beta_{-2,2},\beta_{-2,3},\beta_{-1,1},\beta_{-1,2},\beta_{-1,3},\beta_{2,3} \}\cup  \bigcup_{k\geq 1}\{\beta+k\delta \mid \beta\in B_\infty \}.\]
	One can verify that $B$ is a biclosed set. Let's determine the parameters $(F,\Phi',w)$ so that $B = B(F,\Phi',w)$. We first compute $B_\infty$, which coincides with the auxiliary set shown above. From $B_\infty$ we can extract the data of a face $F$ of the $C_n$-Coxeter fan, and a subsystem $\Phi'$ of $\Phi_F$. The total preorder associated to $F$ satisfies $i\preceq j$ if and only if $\beta_{i,j}\in B_\infty$. We deduce that the signed ordered set partition associated to $F$ is	
	\[(\{-4,1\}, \{-3,-2,0,2,3\},\{-1,4\}).\]
	Then $\Phi_F$ consists of roots $\pm\beta_{i,j}+k\delta$ with $i,j$ sharing a block in this partition, so 
	\begin{align*} \Phi_F = \Phi_1\cup\Phi_2=&\{ \pm\beta_{-3,3}+k\delta, \pm\beta_{-2,2}+k\delta,\pm\beta_{-2,3}+k\delta, \pm\beta_{2,3}+k\delta \mid k\in \ZZ \}\\ \cup &\{ \pm\beta_{-1,4}+k\delta \mid k\in \ZZ \},
	\end{align*}
	and this is the decomposition of $\Phi_F$ into its two indecomposable components. The subsystem $\Phi'$ is the union of the components intersecting $B_\infty$ non-trivially, so in this case $\Phi'$ is the indecomposable component $\Phi_2$ associated to the block $\{-1,4\}$. 
	
	It remains to deduce the element $w\in W_F = \tC_2 \times \tA_1$. This $w$ is the product of a signed affine permutation $w_1$ of $\{-3,-2,0,2,3\}$ and an affine permutation $w_2$ of $\{-1,4\}$. We determine $w_1$ as the element with inversion set equal to 
	\[B\cap \Phi_1^+ = \{\beta_{-3,3},\beta_{-2,2},\beta_{-2,3},\beta_{2,3} \}.\]
	This is the element $w_1$ sending $(-3,-2,0,2,3)$ to $(3,2,0,-2,-3)$. And since $\Phi_2\subseteq \Phi'$, we compute $w_2$ as the element with inversion set equal to 
	\[ \Phi_2^+\setminus B = \{ \beta_{-1,4} \}, \]
	which is the affine permutation sending $(-1,4)$ to $(4,-1)$.
	
	We deduce that $B=B(F,\Phi',w)$. We can compute that the total order associated by \Cref{thm:typeCorders} to $B(F,\Phi)$ is
	\[ \cdots\prec 5 \prec 1 \prec -4\prec -8\prec \cdots \prec -6\prec-3\prec -2 \prec 0 \prec 2 \prec 3 \prec 6  \prec \cdots \]
	\[ \cdots \prec 8\prec 4 \prec -1 \prec -5 \prec \cdots ,\]
	and the total order associated to $B$ is the result of acting by $w$ on this total order. Hence the total order associated to $B$ is
	\[ \cdots\prec 10 \prec -4 \prec 1\prec -13\prec \cdots \prec -12\prec 3\prec 2 \prec 0 \prec -2 \prec -3 \prec 12  \prec \cdots \] 
	\[ \cdots \prec 13\prec -1 \prec 4\prec -10 \prec \cdots.\]
	One can verify that $B$ consists of exactly those $\beta_{i,j}$ with $i<j$ so that $i\succ j$ in this total order.
	
\end{eg}

\begin{cor}
	We can also identify $\bic(\tC_n)$ with total orders on $\ZZ \setminus (N \ZZ)$ which are invariant under translation and negation, modulo reversing intervals of the form $r+N \ZZ$.
\end{cor}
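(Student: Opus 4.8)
The plan is to deduce the corollary from the theorem by the single observation that the roots $\te_j-\te_i$ recorded by a biclosed set all have $i,j\not\equiv 0\bmod N$, so a biclosed set never sees the order relations involving $N\ZZ$. Write $\mathcal{O}$ for the translation- and negation-invariant total orders on $\ZZ$ and $\mathcal{O}_*$ for those on $\ZZ\setminus N\ZZ$, and let $\rho\colon\mathcal{O}\to\mathcal{O}_*$ be restriction. The theorem gives a surjection $B\colon\mathcal{O}\to\bic(\tC_n)$ whose fibers are the classes of the equivalence $\sim$ generated by reversing intervals of the form $r+N\ZZ$. Because the formula $\{\te_j-\te_i: i<j,\ i\succ j,\ i\not\equiv j\bmod N\}$ involves only the order on $\ZZ\setminus N\ZZ$, it factors as $B=B_*\circ\rho$, where $B_*\colon\mathcal{O}_*\to\{\text{subsets of }\Phi^+\}$ is given by the same formula. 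I would prove the corollary by showing that $B_*$ is a surjection onto $\bic(\tC_n)$ whose fibers are exactly the classes of $\sim'$, the equivalence generated by reversing those intervals $r+N\ZZ$ with $r\not\equiv 0\bmod N$ that happen to be intervals of the order.

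Surjectivity of $B_*$ is free: $B=B_*\circ\rho$ is onto $\bic(\tC_n)$, so $B_*$ is too. The substantive combinatorial point is the fiber description. Suppose $B_*(\prec_1)=B_*(\prec_2)$. Since $\te_j-\te_i\in B_*(\prec)$ iff $i\succ j$, equality of the two sets says exactly that $\prec_1$ and $\prec_2$ induce the same order on every pair of integers lying in distinct residue classes mod $N$; hence they can differ only in the internal order of some classes $r+N\ZZ$. On a single class a translation-invariant total order is increasing or decreasing, so on each class the two orders either agree or are opposite. If they are opposite on $C=r+N\ZZ$, then $C$ must be an interval of both: otherwise some $s$ in another class has $c\prec_1 s\prec_1 c'$ with $c,c'\in C$, but the relations of $s$ to $c,c'$ are cross-class data common to $\prec_1$ and $\prec_2$, so the same betweenness would hold for $\prec_2$, which is impossible once $C$ is reversed. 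Thus $\prec_1$ and $\prec_2$ differ by reversing a collection of interval classes, and negation-invariance forces these to occur in pairs $\{r+N\ZZ,-r+N\ZZ\}$ (distinct because $N$ is odd); so $\prec_1\sim'\prec_2$. Conversely, reversing an interval $r+N\ZZ$ flips only inversions between integers congruent mod $N$, which are not roots, so it leaves $B_*$ unchanged.

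The one point that genuinely needs work, and the step I expect to be the main obstacle, is well-definedness of $B_*$ into $\bic(\tC_n)$, i.e.\ that $B_*(\prec)$ is biclosed for every $\prec\in\mathcal{O}_*$. By the theorem this is equivalent to surjectivity of $\rho$: every translation- and negation-invariant order on $\ZZ\setminus N\ZZ$ must extend to one on $\ZZ$. Extending amounts to inserting $N\ZZ$, which by translation-invariance reduces to choosing the down-set $D=\{x:x\prec 0\}\subseteq\ZZ\setminus N\ZZ$; the three requirements are that $D$ be a $\prec$-down-set, that $D-N\subseteq D$ (so the translates of the inserted class nest consistently), and that $D$ be negation-symmetric, meaning $\{-x:x\in D\}$ is the complement of $D$ in $\ZZ\setminus N\ZZ$ (so that $x\mapsto-x$ stays order-reversing). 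I would produce such a $D$ from the coarse block structure of $\prec$: the residues mod $N$ partition into $\prec$-``blocks'' that are linearly ordered and reversed by negation, and one inserts $N\ZZ$ either into the central block or into the central negation-symmetric gap between the two middle blocks, the delicate case being the translation-compatibility of the symmetric cut inside a block. (Should this prove awkward, the robust fallback is to verify the biclosure conditions of $B_*(\prec)$ directly on each rank-two subsystem of $\tC_n$; the $A_2$ and $\tA_1$ cases are exactly the type-$\tA$ computation, and the $B_2$/$C_2$ cases use translation- and negation-invariance together.) Granting well-definedness, the three facts, surjectivity, the fiber description, and invariance of $B_*$ under $\sim'$, assemble to the stated bijection $\bic(\tC_n)\cong\mathcal{O}_*/\!\sim'$, which differs from the theorem only in that the now-invisible interval $0+N\ZZ$ has dropped out of the reversible intervals.
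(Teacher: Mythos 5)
Your overall architecture is the same as the paper's proof sketch: identify $\bic(\tC_n)$ with orders on $\ZZ \setminus (N\ZZ)$ by factoring through restriction, with the substance being that every translation- and negation-invariant order on $\ZZ \setminus (N\ZZ)$ extends to one on $\ZZ$. Your fiber analysis is correct and in fact more detailed than the paper's (the betweenness argument showing a reversed class must be an interval, the pairing of reversals $\pm r + N\ZZ$ forced by negation-invariance, and the observation that the class $0+N\ZZ$ becomes invisible after restriction are all right). But the step you yourself flag as the main obstacle --- producing the cut $D$ and its ``translation-compatibility'' --- is left genuinely unresolved: you gesture at a block structure and a ``central negation-symmetric gap'' without a construction, and your fallback (direct rank-2 verification) is deferred rather than carried out. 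This is exactly where the paper's one key observation lives, and you don't reach it: negation-invariance \emph{forces} the position of $0$, because $0 = -0$ must lie between $x$ and $-x$ for every $x$; equivalently, the only admissible down-set is $D = \{ x \in \ZZ \setminus N\ZZ : x \prec -x \}$.

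With that canonical $D$, all three of your requirements fall out with no delicate case. It is a down-set: if $y \prec x$ and $x \prec -x$, then negation-invariance gives $-x \prec -y$, so $y \prec x \prec -x \prec -y$. Its complement is $-D$ by trichotomy. And the translation-compatibility you worry about is automatic: each translate $D + kN$ is again a down-set of the total order $\prec$ (by translation-invariance), and down-sets of a total order are linearly ordered by inclusion, with the direction of nesting propagating across $k$ since $D \subseteq D+N$ holds if and only if $D + kN \subseteq D + (k+1)N$. Hence every $kN$ has a forced position, the extension is canonical, and the only residual ambiguity --- the internal order of $N\ZZ$ when all the cuts $D + kN$ coincide --- is precisely the reversal of the interval $0 + N\ZZ$, which the theorem's equivalence already quotients out. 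Substituting this for your ``delicate case'' completes your proof and recovers the paper's argument exactly.
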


\begin{proof}[Proof sketch]
	Given such a total order on $\ZZ$, we can always restrict it to $\ZZ \setminus (N \ZZ)$. Conversely, given such a total order on $\ZZ \setminus (N \ZZ)$, we know that $0$ must lie between $-i$ and $i$ for any  $i$, so it is clear where to insert each element of $N \ZZ$ into $\ZZ \setminus (N \ZZ)$. The only ambiguity would be if all of $N \ZZ$ was inserted into the same position, in which case we wouldn't know whether to insert it forwards or backwards, but this is exactly the case where $N \ZZ$ forms an interval for the total order on $\ZZ$.
\end{proof}

\begin{theorem}
	The poset $\bic(\tC_n)$ is a lattice.
\end{theorem}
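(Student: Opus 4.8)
The plan is to imitate the proof of \Cref{thm:TypeALattice} for type $\tAn$, now carrying the extra negation symmetry. As there, I would realize $\bic(\tC_n)$ as the set of fixed points of an idempotent order-preserving retraction of a complete lattice, and the complete lattice in question will again sit inside $\bic(A_\infty)$, which is a complete lattice by \Cref{AInftyLattice}.

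First I would produce the ambient complete lattice. Identifying total orders on $\ZZ$ with elements of $\bic(A_\infty)$, let $G$ be the group of permutations of $\ZZ$ generated by translation $x \mapsto x+N$ and negation $x \mapsto -x$. Each $g \in G$ relabels a positive root $\te_j - \te_i$ (for $i<j$) by $\te_{g(j)} - \te_{g(i)}$, reorienting it to stay positive when $g$ reverses the standard order; it therefore acts on $\bic(A_\infty)$ by a poset automorphism, since it applies a fixed bijection of $\Phi^+$ to each biclosed set, preserving both containment and biclosedness. The point to emphasize is that the negation generator acts by the \emph{order-preserving} involution $\te_j - \te_i \mapsto \te_{-i}-\te_{-j}$, and not by an order reversal: relabeling by $-1$ and reorienting the root each reverse the order, and the two reversals cancel. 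Its fixed points are exactly the negation-invariant orders ($i \prec j \iff -j \prec -i$), while the fixed points of the translation generator are the translation-invariant orders. Since the fixed-point set of any family of poset automorphisms of a complete lattice is a complete sublattice (joins and meets of fixed elements are again fixed), the set $L$ of total orders on $\ZZ$ invariant under both translation and negation is a complete sublattice of $\bic(A_\infty)$, hence a complete lattice.

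Next I would set up the retraction exactly as in type $\tAn$. Define $\pi \colon L \onto \bic(\tC_n)$ by sending an order to $\{ \te_j - \te_i : i<j,\ i \succ j,\ i \not\equiv j \bmod N \}$; this is surjective by the preceding theorem and is clearly order-preserving, as it merely intersects inversion sets with a fixed set of roots. Define $\iota \colon \bic(\tC_n) \into L$ by taking the representative that orders every residue interval $r + N\ZZ$ in increasing order. One checks that $\iota(X)$ is genuinely negation-invariant: increasing order is negation-invariant on the central residue class $0 + N\ZZ$, and ordering a partnered pair $p_{i}+N\ZZ$ and $p_{-i}+N\ZZ$ forward is negation-compatible, so $\iota$ indeed lands in $L$ and $\pi \circ \iota = \mathrm{id}$. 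Then $p \coloneqq \iota \circ \pi$ is an idempotent order-preserving self-map of the complete lattice $L$ with $\bic(\tC_n) \cong \Fix(p)$, and the argument of \Cref{thm:TypeALattice} shows such fixed points form a complete lattice: the join of $\cX \subseteq \Fix(p)$ is $p\big(\bigvee_L \cX\big)$, and dually for meets.

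The main obstacle is the order-preservation of $\iota$, the analogue of \Cref{lem:PiIotaMaps}. I must rule out that some residue class $r + N\ZZ$ is forced into decreasing order in $\iota(X)$ but increasing order in $\iota(Y)$ when $X \subseteq Y$; as in type $\tAn$, this is done by extracting from the non-interval structure of $r + N\ZZ$ in $\prec_X$ a witness $s \not\equiv \pm r \bmod N$ with $r \succ_X s \succ_X r + qN$ for some $q>0$, whose two inversions persist in $Y$ and force $r \succ_Y r+qN$. The only genuinely new features relative to type $\tAn$ are the negation symmetry and the distinguished central class $0 + N\ZZ$, where $0$ must always lie between $-i$ and $i$; I expect the negation constraint to make the resolution of the ambiguity automatically consistent, but carefully verifying this, together with confirming that the $G$-action is order-preserving rather than order-reversing, is where the real work lies.
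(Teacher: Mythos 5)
Your overall strategy is viable, and your key observation is correct: negation, implemented as transport-plus-reorientation $\te_j - \te_i \mapsto \te_{-i} - \te_{-j}$, acts on biclosed sets as an order-\emph{preserving} automorphism, and fixed points of poset automorphisms of a complete lattice form a complete sublattice. But you deploy this idea one level too early (on $\bic(A_\infty)$) and then rebuild the entire $\pi/\iota$ retraction, and the step you defer --- the type-$\tC$ analogue of \Cref{lem:PiIotaMaps} --- does not go through as you sketched it. Your witness condition ``$s \not\equiv \pm r \bmod N$'' is simultaneously too strong and blind to the actual danger. The roots of $\tC_n$ are $\te_j - \te_i$ with $i \not\equiv j \bmod N$ and $i, j \not\equiv 0 \bmod N$: a witness $s \equiv -r \bmod N$ is perfectly usable (the roots with $i + j \equiv 0 \bmod N$, i.e.\ the long roots, exist in type $C$), whereas a witness $s \equiv 0 \bmod N$ is useless, since no root of $\tC_n$ involves a multiple of $N$. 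Moreover, there are biclosed sets where the only elements separating the class $r + N\ZZ$ lie in $(-r + N\ZZ) \cup N\ZZ$: take a central block $\{0, \pm r\}$, whose orbit $\{0,\pm r\} + N\ZZ$ is ordered $\cdots \prec -r \prec 0 \prec r \prec (-r{+}N) \prec N \prec (r{+}N) \prec \cdots$ (or a $\tC_1$-reversed version making $r+N\ZZ$ decreasing). In exactly the central-block cases you flagged, demanding $s \not\equiv \pm r$ leaves no witness at all, so your extraction fails. To repair the lemma you need two things you do not supply: (a) a proof that a witness avoiding $N\ZZ$ always exists --- this follows from negation-invariance, since $0$ lies between $-x$ and $x$, so whenever a multiple of $N$ separates two elements of $r + N\ZZ$, some element of $-r + N\ZZ$ does too; and (b) a separate monotonicity check for pairs $(x, aN)$, which are invisible to $\pi$ but whose relative order in $\iota(X)$ is governed by the long roots $\beta_{-x,x} = 2\te_x$, which do transfer from $X$ to $Y$. (This second issue has no analogue in type $\tA$, where every residue carries roots.)

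The paper sidesteps all of this work. Since $\bic(\tA_{N-1})$ is already known to be a complete lattice by \Cref{thm:TypeALattice}, it defines the negation involution $\sigma$ directly on $\bic(\tA_{N-1})$ (via $x \prec_\sigma y$ iff $-y \prec -x$, checking it descends through the interval-reversal equivalence), observes that $\sigma$ is a poset automorphism, and identifies $\bic(\tC_n)$ with the fixed points of $\sigma$; completeness is then immediate, with no new $\pi/\iota$ maps and no new monotonicity lemma. So: your route can in principle be completed, and it would yield the same conclusion via a retraction rather than a sublattice, but as written it has a genuine gap at its crux, and the paper's choice of where to apply the fixed-point idea eliminates precisely the step your sketch gets wrong. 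It is worth noting that the distinction between $s \equiv -r$ and $s \equiv 0$ is not pedantry: in type $\tD_n$, where the roots with $i + j \equiv 0 \bmod N$ are absent, the analogous transfer genuinely fails, which is consistent with the paper's discussion of the extra biclosed sets in \Cref{sec:Dmodel}.
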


\begin{proof}
	We define an involution $\sigma$ on $\bic(\tA_{N-1})$: If $X \in \bic(\tA_{N-1})$ corresponds to the total order $\prec$, define a new total order $\prec_{\sigma}$ by $x \prec_{\sigma} y$ if and only if $(-y) \prec (-x)$. Let $\sigma(X)$ correspond to the total order $\prec_\sigma$. It is easy to check that this operation commutes with the equivalence relation mapping $L_N$ to $\bic(\tA_{N-1})$, so $\sigma$ is an involution on $\bic(\tA_{N-1})$.
	It is also clear that $\sigma$ is a poset automorphism. Therefore, we have $\bigjoin \sigma(\cX) = \sigma \left( \bigjoin \cX \right)$ and $\bigmeet \sigma(\cX) = \sigma \left( \bigmeet \cX \right)$ for any subset $\cX$ of $\bic(\tA_{N-1})$. 
	
	From our combinatorial model for $\bic(\tC_n)$ above, we see that $\bic(\tC_n)$ is the fixed points of $\sigma$. The compatibility of $\sigma$ with the lattice operations, noted above, then implies that $\bic(\tC_n)$ is a sublattice of $\bic(\tA_{N-1})$.
\end{proof}

\subsection{The affine group \texorpdfstring{$\tB_n$}{\~B\_n}} \label{sec:Bmodel}
We continue to set $N = 2n+1$ and to impose that the variables $i$, $j$, $k$ and $\ell$ represent integers $\not\equiv 0 \bmod N$.

\begin{definition}
	The affine group $\tB_n$ is the subgroup of $\tC_n$ consisting of permutations $f : \ZZ \to \ZZ$ in $\tC_n$ such that $\#\{ x \in \ZZ :\ x \geq n+1,\ f(x) \leq n \}$ is even. 
\end{definition}
For example, $t^{\tC}_{(-1) 1}$ is in $\tB_n$
but $t^{\tC}_{n(n+1)}$ is not in $\tB_n$. 

\begin{remark} \label{rem:tB1}
	Usually, $\tB_n$ is defined for $n \geq 2$, but our definition also works for $n=1$. In this case, $\tB_1$ is the subgroup of $\tA_2$ generated by $t^{\tC}_{(-1) 1}$ and $t^{\tC}_{15}$. We note that this group is isomorphic to $\tA_1$. The isomorphism can be seen either by restricting the action to $\{ \pm 1 \} + 6 \ZZ$ or to $\{ \pm 2 \} + 6 \ZZ$, either of which $\tB_2$ permutes within itself.
\end{remark}

The Coxeter diagram for $\tB_n$, when $n\geq 2$, is:
\begin{center}
	\begin{dynkinDiagram}[scale=1.5,extended,Coxeter,labels={n,n-1,,n-3,2,1,0},backwards=true,edge length = .45cm]{B}{}
		\draw node[right,font=\scriptsize] at (root 2) {$n-2$};
	\end{dynkinDiagram}.
\end{center}

The reflections of $\tB_n$ are $t^{\tC}_{ij}$ for $i<j$ with either $i \not\equiv \pm j \bmod N$ or $i+j \equiv 0 \bmod 2N$. 
The positive root corresponding to $t^{\tC}_{ij}$ for $i \not\equiv \pm j \bmod N$ is $\beta_{ij}\coloneqq \te_j - \te_i$;
the positive root corresponding to $t^{\tC}_{ij}$ for $i+j \equiv 0 \bmod 2N$ is $\beta'_{ij}\coloneqq\tfrac{1}{2} \left(\te_j - \te_i \right)$.
The simple roots are $\alpha_0\coloneqq\beta'_{-1,1}$, $\alpha_i\coloneqq\beta_{i,i+1}$ for $1\leq i\leq n-1$, and $\alpha_n\coloneqq \beta_{n-1,n+1}$. 

\begin{remark}
	We take $\tfrac{1}{2} \left(\te_j - \te_i \right)$ rather than $\te_j - \te_i$ to match the standard convention that  we should have a  ``short root''; this factor of $1/2$ will never matter. Additionally, our indexing is chosen to match \cite{Bjorner2005}, so in this section $\alpha_n$ is the ``extra'' root added to the type $B_n$ root system to get $\tB_n$. In the other sections, that role is played by $\alpha_0$.
\end{remark}

\begin{remark} \label{NotSublattice}
	The inclusion $\tB_n \into \tC_n$ is a map of groups, and of posets (using weak order), but it is not a map of lattices. 
	We can see this issue by thinking about the subgroups of $\tB_3$ and $\tC_3$ taking $\{ 1,2,3,4,5,6 \}$ to itself, which are isomorphic to $D_3$ and to $C_3=B_3$, respectively. 
	Restricting our attention to $\{ 1,2,3,4,5,6 \}$, consider the permutations $624351$ and $365214$, both of which are in $D_3$. 
	Their join in $B_3$ is $654321$, whereas their join in $D_3$ is $653421$. 
	(Note that  $653421$ is not greater than $624351$ in $B_3$, because $(3,4)$ is a $B_3$-inversion of $624351$ and not of $653421$, but $(3,4)$ is not a $D_3$-inversion, so $653421>624351$ in $D_3$.)
	The lack of an inclusion $\iota: D_3 \into B_3$ which is a map of lattices is a major obstacle to mimicking our proof of the lattice structures in $\tA$ and $\tC$ for types $\tB$ and $\tD$.
\end{remark}

Although for group theory purposes it is useful to think of $\tB_n$ as a subgroup of $\tC_n$, for lattice purposes it is often better to think of $\tB_n$ as a quotient. 
Namely, each coset in $\tC_n/\langle t^{\tC}_{n(n+1)} \rangle$ contains a unique element of $\tB_n$. 
In other words, an element of $\tB_n$ can be thought of as a permutation in $\tC_n$, modulo the operation of switching the values of $f(n + rN)$ and $f(n+1 + rN)$. 
Note that we always have $f(n + rN) + f(n+1+rN) \equiv N \bmod 2N$ and that, conversely, if $i$ and $j$ with $i+j \equiv N \bmod 2N$ occur as consecutive values $f(m)$ and $f(m+1)$, then we must have $m \equiv n \bmod N$. 
So we can also describe this as being allowed to switch consecutive elements $i$ and $j$ if $i+j \equiv N \bmod 2N$.

\begin{remark}
	This quotient map $\tC_n \to \tB_n$ is a map of posets, but it is not a map of groups (since $\langle t^{\tC}_{n(n+1)} \rangle$ is not normal) and it is not a map of lattices. 
	As in \Cref{NotSublattice}, the absence of a surjection of lattices $\pi : \tC_n \to \tB_n$  is a major obstacle to mimicking our proof of the lattice structures in $\tA$ and $\tC$ for types $\tB$ and $\tD$.
\end{remark}

We now proceed to describe $\bic(\tB_n)$. Proving that it is a lattice will occur in a future paper.

\begin{theorem}
	Let $\prec$ be a total order on $\ZZ$ which is invariant for translation (meaning $i \prec j$ implies $(i+N) \prec (j+N)$) and for negation (meaning that $i \prec j$ implies $-j \prec -i$).
	Then $\{ \te_j - \te_i : i<j,\ i \succ j,\ i \not\equiv j \bmod N,\ i+j \not \equiv N \bmod 2 N \}$ is a biclosed set of $\tB_n$, and all biclosed sets for $\tB_n$ are of this form.
	Two total orders on $\ZZ$ give the same biclosed set if and only they are the same up to (1) reversal on intervals of the form $r+N \ZZ$ and (2) interchanging neighbors $i$ and $j$ for which $i+j \equiv N \bmod 2N$
	.\end{theorem}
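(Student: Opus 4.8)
The plan is to mirror the proof just given for $\tC_n$, applying \Cref{Parametrization} to parametrize biclosed sets by triples $(F, \Phi', w)$ and then translating this data into a total order on $\ZZ$. Since $\tB_n$ and $\tC_n$ have the same finite Weyl group $W_0$ (of type $B_n = C_n$), they share a Coxeter fan, so the faces $F$ are again indexed by signed ordered set partitions $p_{-r}, \ldots, p_{-1}, p_0, p_1, \ldots, p_r$. First I would compute the parahoric subgroup $W_F$. The non-central blocks contribute type $\tA_{|p_i|-1}$ factors exactly as before, but the central block now contributes a factor of type $\tB_{(|p_0|-1)/2}$ in place of $\tC_{(|p_0|-1)/2}$; this is a routine check from the root-system description, the point being that the roots of $\tB_n$ vanishing on $F$ omit precisely the vectors $\te_j - \te_i$ with $i+j \equiv N \bmod 2N$, which is exactly what distinguishes affine type $\tB$ from type $\tC$. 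As in type $\tC$, choosing $\Phi'$ then amounts to selecting a collection of non-singleton blocks closed under the symmetry $p_i \leftrightarrow p_{-i}$, together with an optional choice of the central block when it is non-singleton.

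Next I would translate the standard representative $B(F, \Phi')$ into a total order by the same recipe as in type $\tC_n$: order the blocks as $(p_{-r}+N\ZZ) \prec \cdots \prec (p_r + N\ZZ)$, orient each $p_i + N\ZZ$ forwards or backwards according to whether $p_i \in \Phi'$, and let $w = (w_0; w_1, \ldots, w_r)$ act on the individual blocks through the corresponding factors of $W_F$. The factors $w_i$ of type $\tA$ act on the non-central blocks exactly as in type $\tA$, so the orderings they produce are already understood. The one genuinely new ingredient is the action of the central factor $w_0 \in \tB_{(|p_0|-1)/2}$ on the central block $p_0 + N\ZZ$.

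For this central factor I would invoke the description of $\tB_m$ as a quotient of $\tC_m$, analogous to the $\tC_n \to \tB_n$ quotient recorded above: an element of $\tB_m$ is a permutation in $\tC_m$ taken modulo switching consecutive values $i$ and $j$ with $i+j \equiv N \bmod 2N$. Lifting $w_0$ to $\tC_m$ and applying the already-established type $\tC$ description equips the central block with a $\tC_m$-ordering that is well defined up to interchanging neighbors $i, j$ with $i+j \equiv N \bmod 2N$. Both listed operations then manifestly preserve the biclosed set: reversing an interval $r + N\ZZ$ is invisible because two integers congruent modulo $N$ never span a root, and interchanging such a pair of neighbors is invisible because $\te_j - \te_i$ is not a root of $\tB_n$ when $i+j \equiv N \bmod 2N$. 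Moreover I would note that a pair with $i+j \equiv N \bmod 2N$ can only be neighbors inside the central block: such $i$ and $j$ have residues lying in the blocks $p_a$ and $p_{-a}$, which for $a \neq 0$ are separated by the nonempty central interval $p_0 + N\ZZ$ and hence are never adjacent. This confines the interchange to the central block, matching its origin in the central $\tB_m$ factor.

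I expect the central-block analysis to be the main obstacle: one must isolate the difference between the central $\tC_m$ and $\tB_m$ factors and show it is accounted for by the neighbor-interchange, neither more nor less. The forward direction, that the two operations preserve the $\tB_n$ biclosed set, is immediate once one knows which $\te_j - \te_i$ are roots. The reverse direction, that two total orders yielding the same biclosed set must be related by these operations, is where the work lies; I would deduce it from the freeness of the $W_F$-action in \Cref{Parametrization} together with the $\tC_m \to \tB_m$ quotient, which together show that the fibers of the map from total orders to $\tB_n$ biclosed sets are exactly the orbits generated by reversal and interchange. Assembling these pieces as in the type $\tC$ argument yields the stated bijection.
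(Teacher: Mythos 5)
Your proposal follows essentially the same route as the paper's own proof sketch: index faces by signed ordered set partitions (using $B_n = C_n$ for the finite group), compute the parahoric as $\tB_{(|p_0|-1)/2} \times \prod \tA_{|p_i|-1}$, translate $B(F,\Phi',w)$ into a total order exactly as in type $\tC$, and account for the central $\tB$-versus-$\tC$ factor via the quotient description of $\tB_m$ in $\tC_m$, which is precisely the neighbor-interchange for $i+j \equiv N \bmod 2N$. Your added details --- the observation that such pairs can only be adjacent within the central block, and the appeal to freeness of the $W_F$-action for the uniqueness direction --- are correct and consistent with (indeed slightly more explicit than) the paper's argument.
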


\begin{proof}[Proof sketch]
	Since the Coxeter groups $B_n$ and $C_n$ are the same, the faces of the $B_n$ Coxeter fan are, like those of the $C_n$ Coxeter fan,  indexed by the signed ordered set partitions.
	Given a signed set partition $(p_{-r}, \ldots, p_{-2}, p_{-1}, p_0, p_1, p_2, \ldots, p_r)$, the corresponding face subgroup is $\tB_{(|p_0|-1)/2} \times \prod_{i=1}^r \tA_{|p_i|-1}$, with the conventions that $\tB_0$ and $\tA_{1-1}$ are the trivial group and $\tB_1$, as mentioned in \Cref{rem:tB1}, is $\tA_1$.
	
	As in the $\tC$ proof, this gives us a total order on $\ZZ$, except that we need to keep track of the $\tA_{1-1}$ factors, coming from the singleton $p_i$ for $i$, and we need to deal with the fact that our first factor is $\tB_{(|p_0|-1)/2}$, not $\tC_{(|p_0|-1)/2}$. 
	As in the $\tC$ proof, the singleton $p_i$ are addressed by allowing us to reverse intervals of the form $r+N \ZZ$.
	
	We now must check that the freedom to interchange $i$ and $j$, when $i+j \equiv N \bmod 2N$, is equivalent to working with $\tB_{(|p_0|-1)/2}$ rather than $\tC_{(|p_0|-1)/2}$. 
	First, note that such $i$ and $j$ can only occur together in the same block if that block is the central block, so we only need to study what happens to our order on $p_0 + N \ZZ$.
	As discussed above, switching from $\tC_{(|p_0|-1)/2}$ to $\tB_{(|p_0|-1)/2}$ corresponds to switching consecutive values $i$ and $j$ with $i+j \equiv N \bmod 2N$. 
\end{proof}

\subsection{The affine group \texorpdfstring{$\tD_n$}{\~D\_n}} \label{sec:Dmodel}
Finally, we come to $\tD_n$. 
We continue to use our conventions that $N = 2n+1$ and $i$, $j$, $k$, $\ell$ are not $0 \bmod N$.

\begin{definition} For $n \geq 2$, the group $\tD_n$ is the subgroup of $\tC_n$ where both $\# \{ x : x > 0, f(x) < 0 \}$ and $\# \{ x : x \geq n+1 , f(x) \leq n \}$ are even.
\end{definition}

The Coxeter diagram for $\tD_n$, when $n\geq 3$, is:
\begin{center}
	\begin{dynkinDiagram}[scale=1.5,extended,Coxeter,labels={0,1,2,3,n-3,,n-1,n},edge length = .45cm]{D}{}
		\draw node[right,font=\scriptsize] at (root 5) {$n-2$};
	\end{dynkinDiagram}.
\end{center}

When $n\geq 2$, the reflections in $\tD_n$ are $t^{\tC}_{ij}$ for $i \not \equiv \pm j \bmod N$, and the positive roots are $\beta_{ij}\coloneqq \te_j - \te_i$ for $i<j$, $i \not\equiv \pm j \bmod N$. 
In this case, we may also think of $\tD_n$ as a quotient of $\tC_n$ by $\langle t^{\tC}_{(-1)1}, t^{\tC}_{n(n+1)} \rangle$. When $n\geq 3$, the simple roots are $\alpha_0\coloneqq \beta_{-1,2}$, $\alpha_i\coloneqq\beta_{i,i+1}$ for $1\leq i \leq n-1$, and $\alpha_n\coloneqq \beta_{n-1,n+1}$.

We now discuss small values of $n$.
We note that this discussion is important, because our face subgroups will contain factors $\tD_r$ for $r \geq 1$, so we need to know what we mean by $\tD_1$, $\tD_2$ and $\tD_3$.

For $n=1$, we define $\tD_1$ to be the trivial group. 
This makes sense from a root system perspective: The roots of $\tD_1$ should be $\te_j - \te_i$ where $i$, $j \not \equiv 0 \bmod 3$ and $i \not \equiv \pm j \bmod 3$, and there are no solutions to these equations. 
It also makes sense when thinking of $\tD_1$ as a quotient of $\tC_1$: the reflections $t^{\tC}_{(-1)1}$ and $t^{\tC}_{12}$ generate $\tC_1$.
If we literally used the definition above as a subgroup of $\tC_1$, this would not work. For any $x \equiv 1 \bmod 6$, the permutation defined by $f(3n \pm 1) = 3n \pm x$ and $f(3n) = 3n$ obeys the given parity conditions, but we will not allow this.

For $n=2$, we can use the definition above. 
The root system then is $\{ \te_j - \te_i : i \not \equiv \pm j \bmod 5 \}$. We note that the roots with $(i,j) \equiv \pm (1,2) \bmod 5$ form one root system and those with $(i,j) \equiv \pm (1,3) \bmod 5$ form a second, orthogonal root system. The simple roots of the first system are $\te_2-\te_1$,$\te_6-\te_2$ and the simple roots of the second system are $\te_3-\te_1$,$\te_6-\te_3$.
We thus have $\tD_2 \cong \tA_1 \times \tA_1$ as groups and $\bic(\tD_2) \cong \bic(\tA_1) \times \bic(\tA_1)$. 
The fact that the group $\tD_2$ factors will be a major inconvenience for us, because it means that, whenever we have a factor of $\tD_2$ in our face subgroups, we get two different irreducible factors in that subgroup, so there are twice as many options for $\Phi'$ as we might guess.

Finally, we note that have $\tD_3 \cong \tA_3$, as Coxeter groups and as root systems, so $\bic(\tD_3) \cong \bic(\tA_3)$. 
This isomorphism is interesting but not important for us; we can simply study $\tD_3$ separately by the methods in this section without needing to consider this isomorphism.

To index the faces of the $D_n$ Coxeter arrangement, we use a variant of signed ordered set partitions.
We define a \newword{type $D$ signed ordered set partition} to be an ordered set partition of $\{ \pm 1, \pm 2, \ldots, \pm n \}$ into sets $p_{-r}, \ldots, p_{-1}, p_0, p_1, \ldots, p_r$ such that $p_{-i} = - p_i$ and $p_0$ is nonempty. (Note that $|p_0|$ is even now.)

We consider the $D_n$ Coxeter fan as living in $\RR^n$ with coordinates $x_1, x_2, \ldots, x_n$.
The $D_n$ hyperplanes are the hyperplanes of the form $x_i = \pm x_j$ for $i \neq j$. 
We put $x_{-i} = - x_i$. 

The type $D$ signed ordered set partition $(p_{-r}, \dots, p_{-1}, p_0, p_1, \dots, p_r)$ corresponds to the region in $\RR^n$ where
\begin{enumerate}
	\item For $a > 0$ and $p_a = \{ i_1, i_2, \ldots, i_k \}$, we have $x_{i_1} = x_{i_2} = \cdots = x_{i_k}>0$.
	\item If $|p_0| > 2$ and $p_0 = \{ \pm i_1, \pm i_2, \ldots, \pm i_k \}$, we have $x_{i_1} = x_{i_2} = \cdots = x_{i_k} = 0$. 
	\item For $-r \leq a < b \leq r$, with $i \in p_a$ and $j \in p_b$, we have $x_i < x_j$.
\end{enumerate}

\begin{eg}
	The type $D$ signed ordered set partition 
	\[(\{3 \}, \{ -2, 5 \}, \{ \pm 1, \pm 4 \}, \{ 2, -5 \}, \{ -3 \})\] 
	corresponds to $0=x_1=x_4 < x_2 = (-x_5) < (-x_3)$.
	The type $D$ signed ordered set partition $(\{3 \}, \{ -2, 5 \}, \{ -4 \}, \{ \pm 1 \}, \{ 4 \}, \{ 2, -5 \}, \{ -3 \})$ corresponds to $|x_1|<x_4 < x_2 = (-x_5) < (-x_3)$.
\end{eg}

Let $(p_{-r}, \dots, p_{-1}, p_0, p_1, \dots, p_r)$ be a type $D$ signed ordered set partition. 
The corresponding  face subgroup is $\tD_{|p_0|/2} \times \prod \tA_{|p_i|-1}$, with the conventions for $\tD_1$, $\tD_2$ and $\tD_3$ described above. 

We'd like to describe $\bic(\tD_n)$ using total orders on $\ZZ$, as we have in other types. 
Based on our other examples, we might  consider total orders on $\ZZ$ which are invariant for translation by $N$ and negation, modulo the relations of (1) reversing intervals of the form $r+N \ZZ$ and (2) interchanging adjacent values $i$ and $j$ where $i+j \equiv 0 \bmod N$. 
We can turn such a total order $\prec$ into the biclosed set $\{ \te_j - \te_i : i<j,\ i \succ j,\ i \not \equiv \pm j \bmod N \}$.

This would describe a large portion, but not all, of $\bic(\tD_n)$. 
Specifically, what we miss is those biclosed sets that correspond to a  type $D$ signed ordered set partition $(p_{-r}, \ldots, p_{-1}, p_0, p_1,\allowbreak \ldots, p_r)$ where $|p_0| = 4$ and where $\Phi'$ uses precisely one of the two factors of $\tD_2 \cong \tA_1 \times \tA_1$.

Let us see what these extra biclosed sets look like. Let $p_0 = \{ \pm i, \pm j \}$ for $1 \leq i < j \leq n$. 
The positive roots in $\Phi_{F}$ are 
\[ \{ \te_j - \te_i,  \te_{j+N} - \te_i,  \dots, \te_{i+2N} - \te_j, \te_{i+N} - \te_j \}\ \cup \ \{ \te_{N-j} - \te_i, \te_{2N-j} - \te_i, \ldots, \te_{i+N} - \te_{-j}, \te_i - \te_{-j} \} \]
These are two disjoint $\tA_1$'s.
The extra biclosed sets are those which have finite intersection with the first $\tA_1$ and cofinite intersection with the second $\tA_1$, or vice versa.

We thus spell out our, somewhat wordy, classification of biclosed sets in type $\tD$:

\begin{theorem}
	Let $\prec$ be a total order on $\ZZ$ which is invariant for translation (meaning $i \prec j$ implies $(i+N) \prec (j+N)$) and for negation (meaning that $i \prec j$ implies $-j \prec -i$).
	Then $\{ \te_j - \te_i : i<j,\ i \succ j,\ i \not\equiv \pm j \bmod N \}$ is a biclosed set of $\tD_n$.
	Two total orders on $\ZZ$ give the same biclosed set if and only they are the same up to (1) reversal on intervals of the form $r+N \ZZ$ and (2) interchanging neighbors $i$ and $j$ for which $i+j \equiv 0 \bmod N$.
	
	In addition to the biclosed sets described above, we must introduce some additional biclosed sets. 
	Let $\prec$ be a total order such that there is an interval of the form $\{ \pm i,\ \pm j \} + N \ZZ$ (with $\pm i \not\equiv \pm j \not\equiv 0 \bmod N$).
	Let $X$ be the biclosed set corresponding to this partial order. 
	Then we can get another biclosed set as $X \oplus \{ \te_b - \te_a : a<b,\ \{ a,b \} \equiv \{ i,j \} \bmod N\}$. 
	
	The biclosed sets coming from total orders, and their modifications as in the second paragraph, are the complete list of biclosed sets in $\tD_n$.
\end{theorem}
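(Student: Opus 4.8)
The plan is to specialize \Cref{Parametrization} to $W_0 = D_n$, following the same route used for $\tC_n$, and then to isolate the single place where the resulting data fails to be encodable by a total order on $\ZZ$. By \Cref{Parametrization}, every biclosed set is $B(F, \Phi', w)$ for a unique triple, where $F$ is a face of the $D_n$ Coxeter fan, $\Phi'$ is a union of indecomposable components of $\Phi_F$, and $w \in W_F$. I would first record the two dictionaries already set up above: faces $F$ correspond to type $D$ signed ordered set partitions $(p_{-r}, \dots, p_{-1}, p_0, p_1, \dots, p_r)$, and the associated parahoric is $W_F = \tD_{|p_0|/2} \times \prod_{i=1}^r \tA_{|p_i|-1}$, where each outer factor $\tA_{|p_i|-1}$ acts simultaneously on $p_i + N\ZZ$ and $p_{-i} + N\ZZ$ by negation-equivariance.

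Next I would translate the standard representative $B(F, \Phi')$ into order data exactly as in the $\tC_n$ sketch: order the blocks by $(p_{-r} + N\ZZ) \prec \cdots \prec (p_r + N\ZZ)$ and orient each $p_i + N\ZZ$ forwards or backwards according to whether its component lies in $\Phi'$; acting by $w \in W_F$ then permutes each block internally by the matching factor of $w$. This produces a translation- and negation-invariant total order $\prec$ whose inversion set is $\{\te_j - \te_i : i < j,\ i \succ j,\ i \not\equiv \pm j \bmod N\}$, which is the claimed biclosed set. The two equivalence moves arise just as before: reversing an interval $r + N\ZZ$ is invisible precisely when that residue class is a singleton block (its orientation is not recorded by $\Phi'$), and interchanging neighbors $i$, $j$ with $i + j \equiv 0 \bmod N$ reflects that $\tD_n$ is the quotient of $\tC_n$ that deletes the long roots $\te_i - \te_{-i}$, so the relative order of $i$ and $-i$ inside a block does not affect any $\tD_n$-inversion.

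The crux, and the only genuine departure from type $\tC_n$, is the central factor $\tD_{|p_0|/2}$. With the conventions fixed above ($\tD_1$ trivial, $\tD_3 \cong \tA_3$, and $\tD_r$ irreducible for $r \geq 3$), this factor contributes at most one component in every case except $|p_0| = 4$, where $\tD_2 \cong \tA_1 \times \tA_1$ splits into two orthogonal $\tA_1$'s. Here I would argue directly from the explicit $\Phi_F$ for $p_0 = \{\pm i, \pm j\}$ displayed above that choosing $\Phi'$ to contain exactly one of these two $\tA_1$'s would force the block $\{\pm i, \pm j\} + N\ZZ$ to be simultaneously increasing on the roots $\equiv \pm\{i,j\}$ and decreasing on the roots $\equiv \pm\{i,-j\}$, which no single total order can realize; this is precisely the obstruction witnessed by $u = t^{\tC}_{12}$ and $v = t^{\tC}_{26}$ in $\tD_2$. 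For these triples I would instead record the set as the symmetric difference $X \oplus \{\te_b - \te_a : a < b,\ \{a,b\} \equiv \{i,j\} \bmod N\}$, where $X$ is the total-order set attached to the underlying signed ordered set partition, and check that this symmetric difference matches the $B(F, \Phi', w)$ produced by splitting the central $\tA_1 \times \tA_1$.

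Completeness and uniqueness are then bookkeeping: since \Cref{Parametrization} already lists every biclosed set exactly once as some $B(F, \Phi', w)$, it suffices to verify that the map from triples to the pair (total orders modulo the two moves, extra symmetric-difference sets) is a bijection, respecting the stated equivalence. The step I expect to be the real obstacle is the $|p_0| = 4$ analysis of the previous paragraph, namely confirming both that it is the unique source of non-total-order biclosed sets and that the symmetric-difference description accounts for exactly the $\Phi'$-choices that split the central $\tD_2 \cong \tA_1 \times \tA_1$; the remaining cases reduce cleanly to the $\tC_n$ argument.
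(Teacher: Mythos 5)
Your proposal is correct and follows essentially the same route as the paper: specializing \Cref{Parametrization} via type $D$ signed ordered set partitions and the parahorics $\tD_{|p_0|/2} \times \prod \tA_{|p_i|-1}$, deriving the two equivalence moves from singleton residue classes and the absence of the long roots $\te_i - \te_{-i}$, and isolating the $|p_0|=4$ central block, where $\tD_2 \cong \tA_1 \times \tA_1$ splits, as the unique source of the extra symmetric-difference biclosed sets. This matches the paper's own development point for point, down to the witnesses $t^{\tC}_{12}$ and $t^{\tC}_{26}$ showing no single total order can realize a mixed finite/cofinite choice on the two orthogonal $\tA_1$'s.
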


\printbibliography
\end{document}